\def \I{\mathbb{I}}
\def \N{\mathbb{N}}
\def \R{\mathbb{R}}
\def \E{\mathbb{E}}
\def \F{\mathbb{F}}
\def \G{\mathbb{G}}
\def \d{\mathbf{d}}
\def \P{\mathbb{P}}
\def \D{\mathbb{D}}
\def \S{\mathbb{S}}
\def \1{\mathds{1}}
\def \0{\mathds{O}}
\def \Bc{\mathcal{B}}
\def \Dc{\mathcal{D}}
\def \Ec{\mathcal{E}}
\def \Fc{\mathcal{F}}
\def \Gc{\mathcal{G}}
\def \Ic{\mathcal{I}}
\def \Nc{\mathcal{N}}
\def \Pc{\mathcal{P}}
\def \Sc{\mathcal{S}}
\def \Tc{\mathcal{T}}
\def \Vc{\mathcal{V}}
\def \Hb{\textbf{H}}
\def \Lb{\textbf{L}}
\def \Sb{\textbf{S}}
\def \Pb{\textbf{P}}
\def \Igen{{\mathscr{P}_{\mathrm{adm}}(\Ic)}}
\def \rme{{\rm e}}
\newcommand{\restr}[2]{#1_{\mkern 1mu \vrule height 2ex\mkern2mu #2}}
\def \eps{\varepsilon}
\def \eqsp{\;}
\def \l{\left}
\def \r{\right}
\providecommand{\keywords}[1]
{
  \small	
  \textbf{Keywords---} #1.
}
\providecommand{\MSC}[1]
{
  \small	
  \textbf{MSC Classification---} #1.
}
\theoremstyle{plain}
\newtheorem{theorem}{Theorem}[section]
\newtheorem{proposition}[theorem]{Proposition}
\newtheorem{lemma}[theorem]{Lemma}
\newtheorem{corollary}[theorem]{Corollary}
\theoremstyle{definition}
\newtheorem{definition}[theorem]{Definition}
\theoremstyle{remark}
\newtheorem{remark}[theorem]{Remark}
\newcounter{hypH}
\newenvironment{hypH}{
    \refstepcounter{hypH}
    \begin{itemize}
    \item[{\bf H\arabic{hypH}}]
    }
{\end{itemize}}
\newcommand{\eg}{\textit{e.g.}}
\newcommand{\ie}{\textit{i.e.}}
\def \CtrlStandard{\Sc}
\title{Controlled Interacting Branching Diffusion Processes: A Viscosity Approach}
\author{Antonio Ocello}
\affil{{\small Centre de Recherche en Économie et de Statistiques (CREST), Groupe ENSAE-ENSAI, ENSAE Paris, Institut Polytechnique de Paris, 91120 Palaiseau, France}}
             \date{\today}
\begin{document}

\maketitle

\begin{abstract}
    We study optimal control problems for interacting branching diffusion processes, a class of measure-valued dynamics capturing both spatial motion and branching mechanisms. From the perspective of the dynamic programming principle, we establish a rigorous connection between the control problem and an infinite system of coupled Hamilton--Jacobi--Bellman (HJB) equations, obtained through a bijection between admissible particle configurations and the disjoint topological union of countable Euclidean spaces.
    Under natural coercivity conditions on the cost functionals, we show that these growth conditions transfer to the value function and yield a viscosity characterization in the class of functions satisfying the same bounds. We further prove a comparison principle, which allows us to fully characterize the control problem through the associated HJB equation.
    Finally, we show that the problem simplifies in the mean-field regime, where the model coefficients exhibit symmetry with respect to the indices of the individuals in the population. This permutation invariance allows us to restrict attention to a reduced class of symmetric admissible controls, a reduction established by combining the viscosity characterization of the value function with measurable selection arguments.
\end{abstract}

\noindent \MSC{93E20, 60J60, 60J80, 35K10, 60J70}

\noindent \keywords{Stochastic control, branching diffusion processes, viscosity solutions, mean-field interactions, comparison principle}


\section{Introduction}
\label{section:intro}

    Interacting particle systems are at the core of many models of collective dynamics. They are naturally described through measure-valued processes, which provide a flexible probabilistic framework to capture both individual behaviors and their aggregate effects. Such models have found applications across a wide range of disciplines, including biology and ecology \citep[see, e.g.,][]{champagnat2006unifying,champagnat2008individual}, genetics \citep{fleming1979some}, and finance \citep{carmona2013mean,grbac2025propagation}. Among them, branching diffusion processes stand out as a particularly rich class, since they combine the spatial dynamics of diffusion with stochastic birth-death mechanisms. Their controlled versions open new directions for both applied and theoretical investigations, as they connect measure-valued population dynamics with stochastic control theory. A related perspective is provided by the mean-field control (MFC) framework \citep{book:Carmona-Delarue_1,book:Carmona-Delarue_2}, which shares a similar spirit by modeling the collective behavior of large populations through their empirical distribution, under the assumption of anonymity and homogeneity in the interactions. 
    
    The study of controlled branching diffusion processes has received growing attention in the recent literature. Early work by \citet{Ustunel} introduced a weak control formalism for measure-valued branching processes. Later, \citet{Nisio} and \citet{claisse18} investigated the strong control problem, focusing on cost structures of product form with particle-wise dependence. More recently, \citet{kharroubi2024stochastic,kharroubi2024optimal} explored stochastic target problems and optimal stopping for branching diffusions. The general connection between control theory and measure-valued processes has also been highlighted in the recent contribution of \citet{cox2024controlled}, underlining the breadth of applications and the methodological challenges that arise in combining these two domains.
    
    In this work, we revisit the strong control problem for branching diffusion processes under general conditions on the interaction scheme.
    This perspective is consistent with recent developments in the literature on heterogeneous models and their scaling limits \citep{caines2021graphon,lacker2023label,de2024mean,coppini2025nonlinear,de2025linear}.
    Specifically, we allow the model coefficients to depend simultaneously on the index of the particle and on the empirical distribution of the population's index and location, thereby capturing the full generality of heterogeneous and non-symmetric interactions. Our analysis establishes the dynamic programming principle and derives the associated Hamilton--Jacobi--Bellman (HJB) system, formulated through a Euclidean bijection of the configuration space. We then prove that the value function admits a viscosity characterization, and we establish a comparison principle that fully characterizes the control problem via its HJB system.

    In addition, we study the mean-field setting, where the coefficients depend only on the population’s empirical distribution of positions and no longer on particle indices. In this regime, the HJB formulation together with measurable selection arguments allows us to establish invariance with respect to permutations of particle labels. This invariance naturally propagates to the optimizer: it is optimal to restrict attention to symmetric controls, namely controls that assign the same action to any two particles occupying the same position.

    Finally, when attempting to optimize trajectories, we focus on the spatial motion of the particles in the population, a concept that is naturally captured through the system's kinetic energy.
    This is the case of the Schrödinger bridge problem, as in \citet{follmer2006random}, where one seeks to identify the random evolution (\ie, a probability measure on path-space) that is closest to a prior Markov diffusion evolution in the relative entropy sense, while also satisfying certain initial and final marginals. It has been noted that this problem can be framed as a stochastic control problem \citep[see, \eg,][]{DaiPra:Pavon:SB_Ctrl_sto:1990,DaiPra:SB_Ctrl_sto:1991,chen2016relation,chen2021stochastic}, where the kinetic energy plays a fundamental role in the cost function. Continuing along this line of reasoning, we present an example involving a comparable cost function and proceed to solve it with the help of the verification theorem.
    
    In a companion paper \citep{ocello2023controlled}, we develop the relaxed formulation of this problem, showing its equivalence with the strong control setting under suitable assumptions on the coefficients; this formulation is a key step towards scaling limits, such as the superprocess limits studied in \citet{ocello2025controlled}.
    
    The paper is organized as follows. In \Cref{section:setting}, we introduce the model, assumptions, and moment estimates ensuring well-posedness of the cost functional. In \Cref{section:HJB}, we establish the dynamic programming principle, derive the HJB system via the Euclidean bijection, and prove both the viscosity characterization and the comparison principle. In \Cref{section:MF-regime}, we specialize to the mean-field regime, where permutation invariance of the coefficients allows us to reduce the analysis to symmetric controls. Finally, the appendices collect technical proofs, including details on the DPP and the verification theorem.

\section{Setting}
\label{section:setting}

\subsection{Notation}

\paragraph{Finite measures.}
For a Polish space $(\Ec,d)$ with $\Bc(\Ec)$ its Borelian $\sigma$-field, we write $C_b(\Ec)$ (resp. $C_0(\Ec)$) for the subset of the continuous functions that are bounded (resp. that vanish at infinity), and $M(\Ec)$ (resp. $\Pc(\Ec)$) for the set of Borel positive finite measures (resp. probability measures) on $\Ec$. We equip $M(\Ec)$ with weak* topology, \ie, the weakest topology that makes continuous the maps $M(\Ec)\ni\lambda\mapsto\int_\Ec \varphi(x) \lambda(dx)$, for $\varphi\in C_b(\Ec)$. We denote $\langle \varphi,\lambda\rangle := \int_\Ec \varphi(x) \lambda(dx)$, for $\lambda\in M(\Ec)$ and $\varphi\in C_b(\Ec)$.

Denote also by $M^1(\Ec)$ the subspace of measures with finite first order moment, \ie, the collection of all $\lambda\in M(\Ec)$ such that $\int_\Ec d(x,x_0)\lambda(dx)<\infty$, for some $x_0\in \Ec$. The weak* topology can be metrized in $M^1(\Ec)$ by the Wasserstein type metric $\d_{1,\Ec}$, as introduced in Appendix B of \citet{Claisse:Tan:MFbranching}. This means that, if $\partial$ is a cemetery point, we consider first $\bar \Ec$ the enlarged space $\bar \Ec := \Ec\cup \{\partial\}$. Defining $d(x,\partial):=d(x,x_0)+1$, we have that $(\bar \Ec,d)$ is Polish. For $m\in\R_+$, we consider the Wasserstein distance $\d_{1,\Ec,m}$, on the space $M^1_m(\bar \Ec)$ defined as
\begin{align*}
    M^1_m(\bar \Ec) := \{\lambda \in M^1(\bar \Ec) :  \lambda(\bar \Ec) = m\}\eqsp,
\end{align*}
as follows
\begin{align*}
    \d_{1,\Ec,m}(\lambda,\lambda') = \inf_{\pi\in\Pi(\lambda,\lambda')}\int_{\bar \Ec \times \bar \Ec}d(x,y)\pi(dx,dy)
    \eqsp, \quad \text{ for }\lambda,\lambda'\in M^1_m(\bar \Ec)\eqsp,
\end{align*}
with $\Pi(\lambda,\lambda')$ the collection of all non-negative measures on $\bar \Ec \times \bar \Ec$ with marginals $\lambda$ and $\lambda'$.
The distance $\d_{1,\Ec}$ on $M^1(\Ec)$ is now defined as
\begin{align*}
    \d_{1,\Ec}(\lambda,\lambda') = \d_{1,\Ec,m}\left(\bar\lambda_m,\bar\lambda'_m\right)\eqsp, \quad \text{ for }\lambda,\lambda'\in M^1_m(\Ec)\eqsp,
\end{align*}
with $m \geq \lambda(\Ec)\vee \lambda'(\Ec)$, $\bar\lambda_m(\cdot):=\lambda(\cdot\cap \Ec) + (m-\lambda(\Ec))\delta_\partial(\cdot)$,and $\bar\lambda'_m(\cdot):=\lambda'(\cdot\cap \Ec) + (m-\lambda'(\Ec))\delta_\partial(\cdot)$.
As proven in Lemma B.1 of \citet{Claisse:Tan:MFbranching}, this definition does not depend on the choice of $m$. Moreover, for some $x_0\in \Ec$, we have the natural bound
\begin{align}\label{eq:bound_d_p_E}
    \d_{1,\Ec}(\lambda,\delta_{x_0})\leq \int_\Ec d(x,x_0)\lambda(dx) + \langle1,\lambda\rangle, \quad \text{ for }\lambda\in M^1(\Ec)\eqsp.
\end{align}

Finally, we write $\Nc(\Ec)$ for the space of finite atomic measures on $Ec$, \ie,
\begin{align*}
    \Nc(\Ec) := \left\{\sum_{i=1}^m\delta_{x_i} ~:~m\in\N, x_i\in \Ec \text{ for }i\leq m\right\}\eqsp,
\end{align*}
a weakly* closed subset of $M(\Ec)$.

\paragraph{Label set.} We use Ulam--Harris--Neveu labelling to consider the genealogy of the particles.
Consider the set of labels 
\begin{align*}
    \Ic := \{\varnothing\}\cup\bigcup_{n=1}^{+\infty}\N^n\eqsp.
\end{align*}
Denote by $\varnothing$ the \textit{mother particle}, and $i=i_1\cdots i_n$ the multi-integer $i=(i_1,\ldots,i_n)\in\N^n$, $n\geq 1$.  For $i=i_1\cdots i_n\in \N^n$ and $j=j_1\cdots j_m\in \N^m$, we define their concatenation is $i j\in\N^{n+m}$ by $i j =i_1\cdots i_n j_1\cdots j_m$, and extend it to the entire $\Ic$ by $\varnothing i = i \varnothing=i$, for all $i\in\Ic$. When a particle $i=i_1\cdots i_n\in \N^n$ gives birth to $k$ particles, the off-springs are labelled $i0,\ldots,i(k-1)$. Moreover, if $\Vc\subset\Ic$ was the set of alive particles, after the branching event on the branch $i\in\Vc$, we have that the new set of alive particles become $\Vc^i_k$, with
\begin{align}
\label{eq:def:V_i_k}
    \Vc^i_k := \Vc\setminus\{i\}\cup \l\{i0,\dots,i(k-1)\r\}\eqsp.
\end{align}

Consider the partial ordering $\preceq$ (resp. $\prec$) by
\begin{align*}
    i\preceq j ~ \Leftrightarrow ~ \exists \ell\in\Ic~:~j=i\ell
    \qquad
    \left(\textrm{resp.}~i\prec j ~ \Leftrightarrow ~\exists \ell\in\Ic\setminus \{\varnothing\}~:~j=i\ell\right)\eqsp,
\end{align*}
for $i,j\in\Ic$.
We endow $\Ic$ with the discrete topology, generated by the distance
\begin{align*}
d^\Ic(i,j) := \sum_{\ell = p+1}^n (i_\ell +1) + \sum_{\ell' = p+1}^m (j_{\ell'} +1)\;,
\qquad\text{ for }
i=i_1\cdots i_{n}\in\N^n, \;j=j_1\cdots j_{m} \in \N^m\eqsp,
\end{align*}
with $p = \max\{\ell\geq1:i_\ell=j_\ell\}$ the generation of the greatest common ancestor. Denote $i\wedge j= i_0\cdots i_p$ and write $|i|:= d^\Ic(i,\varnothing)$, for $i\in\Ic$. Moreover, define the total ordering $\leq$ on $\Ic$ as $i\leq j$ if $i\preceq j$ or $i_{p+1}<j_{p+1}$.


From the definition of $\Vc^i_k$, note that not all possible combinations of indeces are considered when describing a population. Let $\Igen$ be the space of admissible configurations of indeces for a branching population to exists, defined as
\begin{align*}
    \Igen
    :=&\Big\{
        \Vc\eqsp:\eqsp \Vc\subseteq \Ic \text{ finite},\eqsp i\nprec  j,\text{ for }i,j\in\Vc
    \Big\}\eqsp.
\end{align*}
As $\Igen$ is a subset of $\mathscr{P}_{\mathrm{fin}}(\Ic)$ the set of all finite subsets of $\Ic$, it is a countable set.
For $\Vc\in\Igen$, denote $\mathfrak{S}_\Vc$, the set of permutations of $\Ic$ that send $\Vc$ to an admissible configuration in $\Igen$, defined as
\begin{align*}
    \mathfrak{S}_\Vc := 
    \bigl\{
        \mathfrak{s}\in \mathrm{Sym}(\Ic)\;:\;\mathfrak{s}\cdot \Vc \in \Igen
    \bigr\}\eqsp,
\end{align*}
where $\mathrm{Sym}(\Ic)$ is the permutation group of $\Ic$ and the action on subsets is $\mathfrak{s}\cdot \Vc \;:=\; \{\mathfrak{s}(i): i\in \Vc\}\subset \Ic$. Moreover, denote $\mathfrak{s}\cdot\lambda$ to be $\mathfrak{s}\cdot\lambda := \sum_{i\in\Vc}\delta_{(\mathfrak{s}(i),x_i)}$, for $\lambda = \sum_{i\in\Vc}\delta_{(i,x_i)}\in E$.

\paragraph{State and control space.} Take $E\subset \Nc(\Ic\times\R^d)$ as
\begin{align*}
    E:=&\l\{
        \sum_{i\in \Vc}\delta_{(i,x_i)}\eqsp : \eqsp \Vc\in \Igen,\; x_i\in\R^d
    \r\}\eqsp.
\end{align*}
Note that $\Nc(\R^d)$ is a closed set of $M^1(\R^d)$ with respect to the distance $\d_{1,\R^d}$. This is due to the fact that $\Nc(\R^d)$ is weakly*-closed and, from Lemma B.2 in \citet{Claisse:Tan:MFbranching}, convergence in $M^1(\R^d)$ entails weak*-convergence to some $\lambda\in \Nc(\R^d)\subseteq M^1(\R^d)$. Therefore, combining this with the fact the $E$ is weakly*-closed \citep[see, $e.g$, Proposition A.7,][]{kharroubi2024stochastic} and $\Ic$ is equipped with discrete topology, we have that $E$ is also a closed set of $M^1(\Ic\times \R^d)$.

Define now this projection map $\pi:E\to \Nc(\R^d)$ 
as
\begin{align*}
    \pi:E\ni \sum_{i\in \Vc}\delta_{(i,x_i)}\mapsto \sum_{i\in \Vc}\delta_{x_i}
    \eqsp.
\end{align*}
Fix $\lambda=\sum_{i\in \Vc}\delta_{(i,x_i)},\lambda^\prime=\sum_{i\in \Vc}\delta_{(i,y_i)}\in E$. Using the characterisation of the distance $\d_{1,\Ic\times \R^d}$ of Lemma B.1 in \citet{Claisse:Tan:MFbranching}, we obtain
\begin{align}
\label{eq:bound_d_1_Rd-measures-to_R_d}
    \d_{1,\Ic\times \R^d}\left(\lambda,\lambda^\prime\right) = 
    \sup_{\varphi\in\text{Lip}^0_1(\Ic\times \R^d)}\sum_{i\in\Vc}\left|\varphi(i,x_i) - \varphi(i,y_i) \right| \leq \sum_{i\in\Vc}|x_i-y_i| = \|\Vec{x}_\Vc-\Vec{y}_\Vc\|_{1,d|\Vc|}
    \eqsp,
\end{align}
where $\Vec{x}_\Vc=(x_i)_{i\in \Vc}$ is the vector of $\R^{d|\Vc|}$ taken in the order induced by the total ordering $\leq$ on $\Ic$,
$\text{Lip}^0_1(\R^d)$ denote the collection of all functions $\varphi : \Ic\times \R^d\to \R$ with Lipschitz constant smaller or equal to $1$ and such that $\varphi(0) = 0$, and $\|\cdot\|_{1,n}$ denotes the $L^1$-distance in $\R^{n}$, for $n\in\N$.
Using Cauchy--Schwarz inequality, we can also bound the distance $\d_{1,\Ic\times \R^d}$ by
\begin{align}
\label{eq:bound_d_1_Rd-measures-to_R_d-2}
    \d_{1,\R^d}\left(\lambda,\lambda^\prime\right) \leq \sqrt{|\Vc|}\eqsp \|\Vec{x}_\Vc-\Vec{y}_\Vc\|_{2,d|\Vc|}
    \eqsp,
\end{align}
where $\|\cdot\|_{2,n}$ denotes the $L^2$-distance in $\R^{n}$, for $n\in\N$.

Let $\Tc_{t,s}$ denotes the
collection of all stopping times valued in $[t,s]$.
Take the set of actions $A$ to be a closed subset of an Euclidean space.

\paragraph{Càdlàg paths.}
Denote by $\D([0,T];E)$ the space of càdlàg, right continuous with left limits, functions from $[t, +\infty)$ to $E$, equipped with the Skorokhod topology $d_{E}$ associated with the metric $\d_{1,\Ic\times \R^d}$, which makes it complete \citep[see, \eg,][]{billingsley2013convergence}.
    
\subsection{Branching diffusion processes}
\label{Section:strong_form}
Fix a finite time horizon $T > 0$. 
Let $(\Omega,\Fc,\P)$ be a probability space supporting two independent families $\{W^i\}_{i\in \Ic}$ and $\{Q^i\}_{i\in \Ic}$ of mutually independent processes. Let $W^i$ be a $d^\prime$-dimensional Wiener processes, and $Q^i(ds dz)$ a Poisson random measure on $[0,T]\times\R_+$ with intensity measure $dsdz$.
Let $\F=\{\Fc_t\}_{t\geq 0}$ be the filtration generated by these processes, \ie, the (right-continuous) completion of the $\sigma$-algebra $\G=\{\Gc_t\}_{t\geq 0}$ with
\begin{align*}
    \Gc_t := \sigma\l(W^ i_s, Q^i([0,s]\times C)~:~ s \leq t,~ i\in \Ic,~ C\in \Bc(\R_+)\r)
    \eqsp.
\end{align*}
Moreover, let $\Fc_\infty$ (resp. $\Gc_\infty$) be the $\sigma$-algebra generated by $\bigcup_{t\geq 0}\Fc_t$ (resp. $\bigcup_{t\geq 0}\Gc_t$).

Consider the following parameter of models
\begin{align*}
(b,\sigma, \gamma, p_k
)\eqsp :\eqsp \Ic\times \R^d\times E \times A \to \R^d \times \R^{d\times d^\prime}\times \R_+ \times [0,1]\eqsp,
\end{align*}
for $k\geq0$, such that $\sum_{k \geq 0}p_k(i,x,\lambda,a)=1$, for $(i,x,\lambda,a)\in\Ic\times \R^d\times E\times A$.
Let $\Phi$ be the generating function of $(p_k)_k$, \ie,
\begin{align*}
    \Phi(s,i,x,\lambda,a) = \sum_{k=0}^\infty p_k(i,x,\lambda,a) s^k\eqsp, \quad \text{ for }(s,i,x,\lambda,a)\in [0,1]\times \Ic\times\R^d\times E\times A\eqsp.
\end{align*}
We now introduce the following assumptions on these parameters.

\begin{hypH}
\label{hypH:model_parameters}
    \begin{enumerate}[(i)]
        \item Suppose that $b$ and $\sigma$ are Lipschitz continuous in $(x,\lambda)$ uniformly in $(i,a)$, \ie, there exists $L>0$ such that
        \begin{align}
        \label{eq:bound_b_sigma_Lipschitz}
            \left|b(i,x,\lambda,a)-b(i,x',\lambda',a)\right|+\left|\sigma(i,x,\lambda,a)-\sigma(i,x',\lambda',a)\right|\leq
        L (\|x-x'\|_{2,d}+\d_{1,\Ic\times \R^d}(\lambda,\lambda'))\eqsp,
        \end{align}
        for $x,x'\in\R^d$, $\lambda,\lambda'\in E$, and $a\in A,i\in\Ic$.
        \item Suppose that $\sigma$ and $\gamma$ are uniformly bounded, and $b$ has linear growth in $(x,a)$ while bounded in $(\lambda,i)$, \ie, there exists $C_\sigma, C_\gamma, C_b >0$ such that
        \begin{align}\label{eq:bound_b_sigma_gamma}
            \left|b(i,x,\lambda,a)\right|\leq C_b (1+ |x| + |a|)\eqsp,\qquad
            \left|\sigma(i,x,\lambda,a)\right|\leq C_\sigma\eqsp,\qquad
            \gamma(i,x,\lambda,a)\leq C_\gamma\eqsp,
        \end{align}
        for $(i,x,\lambda,a)\in \Ic\times\R^d\times E \times A$.
        \item Suppose that the first and second order moments related to $(p_k)_k$ are uniformly bounded, \ie, there exist two constants $C^1_\Phi, C^2_\Phi>0$ such that
        \begin{align}
        \label{eq:bound:order1_2_Phi}
            \begin{split}
                \partial_s \Phi(1,i,x,\lambda,a)&=\sum_{k\geq1}k p_k(i,x,\lambda,a)\leq C^1_\Phi\eqsp,
                \qquad
                \partial^2_{ss} \Phi(1,i,x,\lambda,a)=\sum_{k\geq1}k(k-1) p_k(i,x,\lambda,a) \leq C^2_\Phi\eqsp,
            \end{split}
        \end{align}
        for $(i,x,\lambda,a)\in \Ic\times\R^d\times E\times A$.
    \end{enumerate}
\end{hypH}

The extension to time-dependent coefficients is straightforward and is not addressed explicitly here in order to avoid heavier notation. This setting will be used later in~\Cref{Section:LQ}.


\begin{definition}[Admissible control]
    We say that $\beta = (\beta^i)_{i\in \Ic}$ is a \emph{admissible control}, and we denote $\beta\in\CtrlStandard$, if $\beta$ is an $\G$-predictable process valued in $A^\Ic$, such that
    \begin{align}\label{eq:bound_sup_beta_2}
        \E\left[\int_t^T \sup_{i\in\Ic} |\beta_s^i|^2 ds\right]<\infty\eqsp.
    \end{align}
\end{definition}

Fix an initial condition $(t,\lambda)\in[0,T]\times E$ and an admissible control $\beta = (\beta^i)_{i\in\Ic}\in\CtrlStandard$. We describe the \textit{controlled branching diffusion} $\xi^{t,\lambda;\beta}$ as the measure-valued process
\begin{align*}
    \xi^{t,\lambda;\beta}_s = \sum_{i\in \Vc^{t,\lambda;\beta}_s}\delta_{(i,Y^{i,\beta}_s)}\eqsp,
\end{align*}
where $Y^{i,\beta}_s$ is the position of the member with label $i\in\Ic$, and $\Vc^{t,\lambda;\beta}_s$ the set of alive particles at time $s$.
This process takes values in $E$ and the behaviour of each alive particle $i$ is characterized by the following three properties:
\begin{itemize}
    \item \textit{Spatial motion}: during its lifetime, it moves in $\R^d$ according to the following stochastic differential equation
    \begin{align*}
        dY^{i,\beta}_s = b\left(i,Y^{i,\beta}_s,\xi^{t,\lambda;\beta}_s,\beta_s^i\right)ds + \sigma\left(i,Y^{i,\beta}_s,\xi^{t,\lambda;\beta}_s,\beta_s^i\right)dW^i_s\eqsp;
    \end{align*}
    \item \textit{Branching rate} $\gamma$: given a position $Y^{i,\beta}_s$ at time $s$, conditionnally to $\Fc_s$, the probability it dies in the time interval $[s,s+\delta s)$ is $\gamma(i,Y^{i,\beta}_{s-},\xi^{t,\lambda;\beta}_{s-},\beta_s^i)\delta s + o(\delta s)$.
    \item \textit{Branching mechanism}: when it dies at a time $s$, conditionnally to $\Fc_s$, it leaves behind (at the location where it died) a random number of offspring with probability $(p_k(i,Y^{i,\beta}_{s-},\xi^{t,\lambda;\beta}_{s-},\beta_{s}^i))_{k\in\N}$.
\end{itemize}
If the control is constant, \ie, we are in the uncontrolled setting, conditionally on time and place of birth, offspring evolve independently of each other.

We emphasize that the dependence of the model's parameters on both the individual particle index $i$ and the empirical measure $\xi^\beta$ represents the most general form of branching interactions. This framework encompasses fully non-symmetric and particle-specific dynamics, allowing for highly heterogeneous systems. As a result, it generalizes many classical models that assume exchangeability or symmetry among particles, and captures a broader range of real-world applications where individuals may behave differently based on their position or role in the population.

Let $L$ be the generator (associated with the spatial motion of each particle) defined on $\varphi\in C^2_b(\Ic\times\R^d)$ as
\begin{align*}
    L \varphi (i,x,\lambda,a) = b(i,x,\lambda,a)^\top D \varphi(i,x) + \frac{1}{2}\text{Tr}\left(
    \sigma\sigma^\top(i,x,\lambda,a)D^2 \varphi(i,x)
    \right)\eqsp,
\end{align*}
where $D\varphi(i,\cdot)$ and $D^2\varphi(i,\cdot)$ denote gradient and Hessian of the function $\varphi(i,\cdot)$, for a fixed index $i\in\Ic$.
A possible representation of previous properties is given by the following SDE 
\begin{align}
\label{SDE:strong}
    \begin{split}
        \langle\varphi,\xi^{t,\lambda;\beta}_s\rangle =&~ \langle\varphi,\lambda\rangle+
        \int_t^s\sum_{i\in \Vc^{t,\lambda;\beta}_u}D \varphi(i,Y^{i,\beta}_u)^\top\sigma\left(i,Y^{i,\beta}_u,\xi^{t,\lambda;\beta}_u,\beta^i_u\right)dB^i_u
        +
        \int_t^s\sum_{i\in \Vc^{t,\lambda;\beta}_u}L \varphi\left(i,Y^{i,\beta}_u,\xi^{t,\lambda;\beta}_u,\beta^i_u\right)du \\
        & + \int_{(t,s]\times\R_+}\sum_{i\in \Vc^{t,\lambda;\beta}_{u-}}\sum_{k\geq0}
        (k-1)\varphi(i,Y^{i,\beta}_{u-}) \1_{I_k\left(i,Y^{i,\beta}_{u-},\xi^{t,\lambda;\beta}_{u-},\beta^i_{u}\right)}(z)
        Q^i(dudz)\eqsp,
    \end{split}
\end{align}
with
\begin{align*}
    I_k(i,x,\lambda,a) = \Bigg[\gamma(i,x,\lambda,a)\sum_{\ell=0}^{k-1}p_\ell(i,x,\lambda,a),\gamma(i,x,\lambda,a)\sum_{\ell=0}^{k}p_\ell(i,x,\lambda,a)\Bigg)\eqsp,
\end{align*}
for all $(i,x,\lambda,a)\in \Ic\times\R^d\times E \times A$, $k\in\N$, with the value of an empty sum being zero by convention. Notice that $(I_k(i,x,\lambda,a))_{k\in \N}$ forms a partition of the interval $[0, \gamma(i,x,\lambda,a))$.

The evolution of the piecewise constant process \((\Vc^{t,\lambda;\beta}_s)_{s\geq t}\) follows the same structural dynamics as those introduced in \citet{claisse18}. In particular, the process evolves through successive branching and interaction events that preserve the admissibility of the configuration. An explicit construction of this evolution is provided in the proof of \Cref{prop:existence_strong_branching} in \Cref{Appendix:Proof:existence_strong_branching}, where the well-posedness of the process under our generalized framework is established.

\paragraph{Existence and moment estimates.}
We now prove the existence of controlled branching diffusions under admissible controls. We also provide bounds on their moments, which are crucial for the well-posedness of the control problem defined in~\Cref{subsection:strong_form}. The proof of this result follows the same lines as Proposition 2.1 from~\citet{claisse18} and is deferred to~\Cref{Appendix:Proof:existence_strong_branching}.

\begin{proposition}
\label{prop:existence_strong_branching}
    Let $(t,\lambda) \in [0,T]\times E$ and $\beta\in\CtrlStandard$. Suppose Assumption H\ref{hypH:model_parameters} holds. Then, there exists a unique (up to indistinguishability) càdlàg and adapted process $(\xi^{t,\lambda;\beta}_s)_{s\geq t}$ satisfying~\eqref{SDE:strong} such that $\xi^{t,\lambda;\beta}_t=\lambda$. In addition, there exists a constant $C>0$ depending only on $T$ and on the coefficients $b$, $\sigma$, $\gamma$, and $(p_k)_k$ such that, for $h>0$,
    \begin{align}
        \label{eq:non-explosion-moment1_mass}
        \E\left[\sup_{u\in[t,t+h]}|\Vc^{t,\lambda;\beta}_u|\right]\leq&~ \langle1,\lambda\rangle~ e^{C_\gamma C^1_\Phi h}\eqsp,
        \\
        \label{eq:non-explosion-moment2_mass}
        \E\left[\sup_{u\in[t,t+h]}|\Vc^{t,\lambda;\beta}_u|^2\right]\leq&~ \langle1,\lambda\rangle^2~ e^{C_\gamma (C^1_\Phi+C^2_\Phi) h}\eqsp,
        \\
        \label{eq:non-explosion-moment1_control}
        \E\left[\int_t^{t+h}\sum_{i\in \Vc^{t,\lambda;\beta}_{u}}|\beta^i_u|du\right]\leq&~ C\eqsp,
        \\
        \label{eq:non-explosion-moment1_population} 
        \E\left[\sup_{u\in[t,t+h]}\sum_{i\in \Vc^{t,\lambda;\beta}_u}\left|Y^{i,\beta}_u\right|\right]\leq&~ 
        C \Bigg(\sum_{i\in \Vc}|x^i| +\E\left[ \int_t^{t+h} |\Vc^{t,\lambda;\beta}_{u}|du \right]
        +\E\left[ \int_t^{t+h} \sum_{i\in \Vc^{t,\lambda;\beta}_{u}}\left|\beta^{i}_{u}\right|du \right] \Bigg)
        \eqsp.
    \end{align}
\end{proposition}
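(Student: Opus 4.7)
The plan is to follow the pathwise construction used in Proposition 2.1 of \citet{claisse18}, adapted to our heterogeneous setting where the coefficients depend on both the particle index $i\in\Ic$ and on the full atomic measure $\lambda\in E$. I would first build the process by iteration between branching times, and then derive the four moment estimates in cascade, each feeding the next one, with a localization argument running in parallel to keep everything rigorous.

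\textbf{Construction and non-explosion.} Between two successive branching events the alive configuration $\Vc^{t,\lambda;\beta}_u$ is constant and the positions $(Y^{i,\beta}_u)_{i\in\Vc^{t,\lambda;\beta}_u}$ satisfy a finite-dimensional SDE driven by the independent Brownian motions $(W^i)_{i\in\Vc^{t,\lambda;\beta}_u}$ with Lipschitz coefficients by \eqref{eq:bound_b_sigma_Lipschitz}, so the classical strong existence and uniqueness theorem applies on each inter-branching segment. Each alive particle $i$ is equipped with an independent Poisson random measure $Q^i$, whose atoms on $[0,C_\gamma]$ trigger the candidate branching events; because $\gamma\leq C_\gamma$ by \eqref{eq:bound_b_sigma_gamma}, the inter-branching times are stochastically lower-bounded by independent exponential variables and cannot accumulate on $[t,T]$ while the population stays bounded. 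At a branching instant, the offspring number $k$ is read off from the unique $k$ such that the atom's mark lies in $I_k(i,Y^{i,\beta}_{u-},\xi^{t,\lambda;\beta}_{u-},\beta^i_u)$, and the configuration is updated via \eqref{eq:def:V_i_k}. Gluing these segments produces a candidate process on $[t,\tau_\infty)$, where $\tau_\infty$ is the accumulation time of branchings, and the first-moment bound below ensures $\tau_\infty=+\infty$ almost surely.

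\textbf{Moment bounds on $|\Vc|$ and on the control.} Applying \eqref{SDE:strong} with $\varphi\equiv 1$ kills the It\^o integral and yields, after compensating the Poisson measure and using $\sum_k (k-1)p_k\leq C^1_\Phi$ from \eqref{eq:bound:order1_2_Phi},
\begin{align*}
    \E\l[|\Vc^{t,\lambda;\beta}_s|\r] \leq \langle 1,\lambda\rangle + C_\gamma\, C^1_\Phi \int_t^s \E\l[|\Vc^{t,\lambda;\beta}_u|\r]\, du\eqsp,
\end{align*}
so Gronwall's lemma delivers \eqref{eq:non-explosion-moment1_mass}; the supremum inside expectation is then recovered through a Doob--BDG bookkeeping step on the compensated martingale associated with the Poisson integral. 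For \eqref{eq:non-explosion-moment2_mass} I apply It\^o's formula for pure-jump processes to $|\Vc^{t,\lambda;\beta}_u|^2$: the squared jump size $(k-1)^2$ generates, via $\sum_k k(k-1)p_k\leq C^2_\Phi$, a Gronwall inequality with rate $C_\gamma(C^1_\Phi+C^2_\Phi)$. The control estimate \eqref{eq:non-explosion-moment1_control} is then a Cauchy--Schwarz consequence of $\sum_{i\in\Vc^{t,\lambda;\beta}_u}|\beta^i_u|\leq |\Vc^{t,\lambda;\beta}_u|\, \sup_{i\in\Ic}|\beta^i_u|$ combined with \eqref{eq:non-explosion-moment2_mass} and the admissibility condition \eqref{eq:bound_sup_beta_2}.

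\textbf{Position estimate and main obstacle.} For \eqref{eq:non-explosion-moment1_population} I would apply \eqref{SDE:strong} to a smooth truncation of $\varphi(i,x)=|x|$ and let the truncation level tend to infinity. The drift part contributes $C_b\int_t^s \sum_{i\in\Vc^{t,\lambda;\beta}_u}(1+|Y^{i,\beta}_u|+|\beta^i_u|)\, du$ via \eqref{eq:bound_b_sigma_gamma}; the diffusion term is controlled by BDG together with $|\sigma|\leq C_\sigma$ and \eqref{eq:non-explosion-moment1_mass}; and the branching term contributes $|Y^{i,\beta}_{u-}|(k-1)$ per jump, whose compensator is bounded in expectation by $C_\gamma\, C^1_\Phi \int_t^s \E\sum_i|Y^{i,\beta}_u|\, du$. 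Taking supremum and expectation and invoking Gronwall's lemma, using \eqref{eq:non-explosion-moment1_mass} and \eqref{eq:non-explosion-moment1_control} to close the remaining inhomogeneous terms, produces \eqref{eq:non-explosion-moment1_population}. The main technical obstacle is the localization procedure: each estimate must first be derived on the stopped process at $\tau_n:=\inf\{u\geq t:|\Vc^{t,\lambda;\beta}_u|\geq n\}\wedge T$, the constants must be shown to be independent of $n$, and the limit $n\to\infty$ must be passed via monotone convergence to exclude explosion and close the interdependent bounds on $|\Vc|$, on $\sum_i|\beta^i|$, and on $\sum_i|Y^{i,\beta}|$ in a self-consistent manner.
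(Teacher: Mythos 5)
Your proposal follows essentially the same route as the paper: inductive construction between branching times via finite-dimensional Lipschitz SDEs, offspring read off from the partition $(I_k)_k$, then the four estimates in cascade under localization, with Gronwall and Fatou to close. Two small points of divergence are worth flagging. First, for \eqref{eq:non-explosion-moment1_mass}--\eqref{eq:non-explosion-moment2_mass} no Doob or BDG "bookkeeping" is needed (or wanted) to pass the supremum inside the expectation: since the only negative jumps come from $k=0$, dropping that term leaves a nondecreasing jump integral, so $\sup_{u\le s}|\Vc_u|$ is bounded by the terminal value of a monotone process whose expectation is exactly the compensator; a maximal inequality would introduce an extra multiplicative constant and fail to deliver the sharp bounds $\langle1,\lambda\rangle e^{C_\gamma C^1_\Phi h}$ and $\langle1,\lambda\rangle^2 e^{C_\gamma(C^1_\Phi+C^2_\Phi)h}$ as stated. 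Second, for \eqref{eq:non-explosion-moment1_population} the paper does not smooth $|x|$; it works pathwise on each inter-branching interval with the triangle inequality on the integrated SDE and a conditional BDG bound (and localizes jointly in $|\Vc_u|$ and $\sum_i|Y^{i,\beta}_u|$ so that the Gronwall quantity is finite a priori) — your smooth-truncation alternative also works, but you should make the a priori finiteness explicit if you localize only on the population size.
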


\subsection{Control problem}
\label{subsection:strong_form}

Let $\psi: \Ic\times \R^d\times E\times A \to \R$ and $\Psi: E \to \R$ be continuous functions, and consider the following assumption.

\begin{hypH}
\label{hypH:coercivity_hyp}
    Suppose that there exists $C_\Psi, c_\psi >0$ such that
    \begin{gather}
        - C_\Psi \left(1+\int_{\R^d}|y| \lambda(dy) + \langle 1\eqsp,\lambda\rangle\right)
        \leq
        \Psi(\lambda)
        \leq
        C_\Psi \left(1+\int_{\R^d}|y|^2 \lambda(dy) + \langle 1,\lambda\rangle^2 \right)\eqsp,
        \label{eq:coercivity_hyp:big_Psi}
        \\
        -C_\Psi \left(1+|x| \right)  + c_\psi|a|^2
        \leq
        \psi(i,x,\lambda, a )
        \leq
        C_\Psi \left(1+|x|^2 + 
        |a|^2 \right)
        \eqsp,
        \label{eq:coercivity_hyp:psi}
    \end{gather}
    for $(i,x,\lambda, a )\in \Ic\times \R^d\times E\times A$.
\end{hypH}

Fix an admissible control $\beta\in\CtrlStandard$ and a starting condition $(t,\lambda) \in[0,T]\times E$. The cost and  value functions are defined as follows:
\begin{align}
\label{eq:def:cost_function}
    J(t,\lambda;\beta) := \E\left[
        \int_t^T \sum_{i\in \Vc^{t,\lambda;\beta}_s} \psi\left(i,Y^{i,\beta}_s,\xi^{t,\lambda;\beta}_s,\beta_s^i\right)ds +  \Psi\left(\xi^{t,\lambda;\beta}_T\right)\Bigg|\xi^{t,\lambda;\beta}_t=\lambda
    \right]
    \qquad
    \text{ and }
    \qquad
    v(t,\lambda) := \inf_{\beta\in\CtrlStandard} J(t,\lambda;\beta)
    \eqsp.
\end{align}

\paragraph{Well-posedness of the control problem.}
To establish the well-posedness of the control problem~\eqref{eq:def:cost_function}, it remains to prove the finiteness of the second moment of the branching processes, at least near an optimal value. To this end, we apply similar techniques used to prove~\Cref{prop:existence_strong_branching} in the proof of the following lemma, which is deferred to~\Cref{Appendix:Proof:bound_moment2_population}.

\begin{lemma}
\label{Lemma:bound_moment2_population}
    Let $(t,\lambda) \in [0,T]\times E$ and $\beta\in\CtrlStandard$. Suppose Assumption H\ref{hypH:model_parameters}-H\ref{hypH:coercivity_hyp} hold. Then, there exists a constant $C>0$ depending only on $T$ and on the coefficients $b$, $\sigma$, $\gamma$, and $(p_k)_k$ such that, for $h>0$,
    \begin{align}
    \label{eq:non-explosion-moment2_population}
        \E\left[\sup_{u\in[t,t+h]}\sum_{i\in \Vc^{t,\lambda;\beta}_u}\left|Y^{i,\beta}_u\right|^2\right]\leq&~ 
        C \Bigg(\langle|\cdot|^2,\lambda\rangle +\E\left[ \int_t^{t+h} |\Vc^{t,\lambda;\beta}_{u}|du \right]
        +\E\left[ \int_t^{t+h} \sum_{i\in \Vc^{t,\lambda;\beta}_{u}}\left|\beta^{i}_{u}\right|^2 du \right] \Bigg)\eqsp.
    \end{align}
\end{lemma}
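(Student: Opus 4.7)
The strategy mirrors the proof of~\eqref{eq:non-explosion-moment1_population} in~\Cref{prop:existence_strong_branching}, applying the measure-valued It\^o--Dynkin formula~\eqref{SDE:strong} to the test function $\varphi(i,x)=|x|^2$ in place of $|x|$. Since $L\varphi(i,x,\lambda,a)=2x^\top b(i,x,\lambda,a)+\mathrm{Tr}(\sigma\sigma^\top(i,x,\lambda,a))$, Young's inequality together with the bounds~\eqref{eq:bound_b_sigma_gamma} yields $|L\varphi(i,x,\lambda,a)|\leq C(1+|x|^2+|a|^2)$, while the compensator of the jump integral contributes, for each alive particle, a term bounded by $C_\gamma C^1_\Phi|Y^{i,\beta}_u|^2$ through~\eqref{eq:bound:order1_2_Phi}. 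Summing these contributions over $i\in\Vc^{t,\lambda;\beta}_u$ produces the right-hand-side template of~\eqref{eq:non-explosion-moment2_population}, up to the two martingale parts.

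Controlling these martingales after taking the supremum in time is the core technical step. For the diffusion martingale, the quadratic variation is dominated by $C\int_t^{t+h}\sum_i|Y^{i,\beta}_u|^2\,du$, and Burkholder-Davis-Gundy followed by Cauchy-Schwarz yields a contribution of order $C\sqrt{h}\,\sqrt{\E[\sup_u\sum_i|Y^{i,\beta}_u|^2]}$, absorbable via Young's inequality into the left-hand side. The compensated Poisson integral is more delicate: its predictable quadratic variation is bounded by $C\int_t^{t+h}\sum_i|Y^{i,\beta}_u|^4\,du$, so a direct BDG bound produces a super-linear term. Using the elementary inequality $\sum_i|Y^{i,\beta}_u|^4\leq(\sum_i|Y^{i,\beta}_u|^2)^2$ followed by the splitting $\sqrt{XY}\leq\eps X/2+Y/(2\eps)$ with $X=\sup_u\sum_i|Y^{i,\beta}_u|^2$ and $Y$ the time integral of the same quantity, one extracts a term of order $\eps\,\E[\sup_u\sum_i|Y^{i,\beta}_u|^2]$, absorbable into the left-hand side for small $\eps$, plus an integral of $\E[\sum_i|Y^{i,\beta}_u|^2]\,du$ that fits the Gronwall template.

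Combining all these estimates yields an integral inequality on $f(s):=\E[\sup_{u\in[t,s]}\sum_{i\in\Vc^{t,\lambda;\beta}_u}|Y^{i,\beta}_u|^2]$ of the form
\[
f(s)\leq C\Big(\langle|\cdot|^2,\lambda\rangle+\E\Big[\int_t^s|\Vc^{t,\lambda;\beta}_u|\,du\Big]+\E\Big[\int_t^s\sum_{i\in\Vc^{t,\lambda;\beta}_u}|\beta^i_u|^2\,du\Big]+\int_t^s f(u)\,du\Big),
\]
to which Gronwall's lemma delivers~\eqref{eq:non-explosion-moment2_population}. The main obstacle is the Poisson jump martingale: the naive BDG bound involves the fourth-moment quantity $\sum_i|Y^{i,\beta}_u|^4$, which is not directly controlled by the target second moment $\sum_i|Y^{i,\beta}_u|^2$, and only the Young-type splitting described above converts it into absorbable and integrable pieces. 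If the absorption step requires smallness of $h$, the argument is iterated on a finite partition of $[t,t+h]$, using~\eqref{eq:non-explosion-moment2_mass} and~\eqref{eq:bound_sup_beta_2} to propagate the second-moment bound across sub-intervals and recover an estimate uniform in $h\leq T$.
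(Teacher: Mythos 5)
Your proposal is correct in substance and arrives at the same Gronwall-type inequality as the paper, but the treatment of the martingale terms is genuinely different, and heavier, than the paper's. For the jump part, the paper does \emph{not} compensate the Poisson integral and apply Burkholder--Davis--Gundy. It observes that the only negative jumps of $\varphi(i,x)=|x|^2$ come from the death events $k=0$, so the running supremum of the jump integral is dominated by the integral of the nonnegative contributions $\sum_{k\geq 1}(k-1)|Y^{i,\beta}_{u-}|^2$ alone over the whole interval; this is an increasing process, and its expectation is simply its compensator, bounded by $C_\gamma C^1_\Phi\,\E[\int_t^{s}\sum_i|Y^{i,\beta}_u|^2\,du]$ via \eqref{eq:bound:order1_2_Phi}. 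This sidesteps entirely the fourth-moment quantity $\sum_i|Y^{i,\beta}_u|^4$ that you identify as the core difficulty, together with the $\sqrt{XY}$ splitting and absorption. (Your route does work: with $[M]^{1/2}\leq(\sup_u\sum_i|Y^{i,\beta}_u|^2)^{1/2}(\int\sum_i\sum_k(k-1)^2|Y^{i,\beta}_{u-}|^2\1_{I_k}(z)\,Q^i(du\,dz))^{1/2}$ and Young's inequality one absorbs the first factor and controls the second by its compensator; note only that BDG at exponent one requires the optional, not the predictable, quadratic variation.) For the Brownian part the paper likewise bounds the BDG term $\E[(\int\sum_i|Y^{i,\beta}_u|^2\,du)^{1/2}]$ directly by the integral, whereas you absorb a $\sqrt{h}\,\sqrt{\E[\sup\cdot]}$ term into the left-hand side; both are fine. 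Finally, your closing caveat about iterating over a partition when $h$ is small is unnecessary: the Young parameter $\eps$ used for absorption is free of $h$, so the resulting Gronwall inequality already yields a bound uniform over $h\leq T$.
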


This lemma shows that if $\E\left[ \int_t^{T} \sum_{i\in \Vc^{t,\lambda;\beta}_{u}}\left|\beta^{i}_{u}\right|^2 du \right]<\infty$, then $|J(t,\lambda;\beta)|< \infty$ by coercivity bounds. This condition indicates that any control close to optimal must satisfy it, as demonstrated in the following proposition.

\begin{proposition}
\label{prop:bound_eps_opt_control}
    Fix $(t,\lambda) \in [0,T]\times E$. Let $\eps>0$, and let $\CtrlStandard^{\eps}_{(t,\lambda)}$ be the set of $\beta\in\CtrlStandard$ satisfying
    \begin{align*}
        J(t,\lambda;\beta) \leq v(t,\lambda)+ \eps\eqsp.
    \end{align*}
    Then 
    \begin{align}\label{eq:bound_eps_opt_control}
        \sup_{\beta\in \CtrlStandard^{\eps}_{(t,\lambda)}} \E\left[ \int_t^{T} \sum_{i\in \Vc^{t,\lambda;\beta}_{u}}\left|\beta^{i}_{u}\right|^2 du \right]<\infty\eqsp.
    \end{align}
    Moreover, $v(t,\lambda)>-\infty$.
\end{proposition}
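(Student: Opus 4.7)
}

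My plan is to show that the map $\beta \mapsto J(t,\lambda;\beta)$ admits a coercive lower bound of the form $c_\psi B_\beta - K_1 - K_2\sqrt{B_\beta}$, where $B_\beta := \E\bigl[\int_t^T \sum_{i\in \Vc^{t,\lambda;\beta}_u}|\beta^{i}_{u}|^2\, du\bigr]$ and $K_1,K_2$ are constants depending only on $(t,\lambda,T)$ and on the coefficients. Once such a bound is available, both conclusions follow easily: coercivity in $B_\beta$ yields $v(t,\lambda)>-\infty$, while combining it with an \emph{upper} bound $v(t,\lambda)\leq J(t,\lambda;a_0)<\infty$ (obtained by plugging in any constant control $\beta\equiv a_0\in A$) turns the $\eps$-optimality condition $J(t,\lambda;\beta)\leq v(t,\lambda)+\eps$ into a quadratic inequality in $\sqrt{B_\beta}$, whence \eqref{eq:bound_eps_opt_control}.

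First I would establish $v(t,\lambda)<\infty$. Picking any $a_0\in A$ and setting $\beta\equiv a_0$, the second-moment bounds \eqref{eq:non-explosion-moment2_mass} on $|\Vc^{t,\lambda;\beta}|$ and the estimate \eqref{eq:non-explosion-moment2_population} of \Cref{Lemma:bound_moment2_population} (applied with the constant control $a_0$, which plainly satisfies $\E[\int_t^T \sum_i |\beta^i|^2 du]\leq |a_0|^2 \E[\int_t^T |\Vc^{t,\lambda;\beta}_u|du]<\infty$ by \eqref{eq:non-explosion-moment1_mass}) imply that the upper coercivity bounds \eqref{eq:coercivity_hyp:big_Psi}--\eqref{eq:coercivity_hyp:psi} make every term in $J(t,\lambda;a_0)$ integrable. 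This yields the finite upper bound on $v(t,\lambda)$.

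Next I would develop the lower bound. Applying the lower bounds in \eqref{eq:coercivity_hyp:big_Psi}--\eqref{eq:coercivity_hyp:psi}, and noting that for $\lambda=\sum_i\delta_{Y^i}$ one has $\int|y|\lambda(dy)=\sum_i |Y^i|$, I obtain
\begin{align*}
J(t,\lambda;\beta)
\;\geq\;
c_\psi B_\beta
\;-\; C_\Psi\, \E\!\left[\int_t^T\!\!\sum_{i\in\Vc^{t,\lambda;\beta}_s}\!\!(1+|Y^{i,\beta}_s|)\,ds\right]
\;-\; C_\Psi\,\E\!\left[1+\sum_{i\in\Vc^{t,\lambda;\beta}_T}|Y^{i,\beta}_T|+|\Vc^{t,\lambda;\beta}_T|\right].
\end{align*}
All the moments of $|\Vc^{t,\lambda;\beta}|$ above are dominated by constants via \eqref{eq:non-explosion-moment1_mass}, so the only nontrivial term is $\E[\sup_{u\leq T}\sum_i|Y^{i,\beta}_u|]$, which by \eqref{eq:non-explosion-moment1_population} is bounded by a constant plus a multiple of $\E[\int_t^T \sum_i|\beta^i_u|\,du]$. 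This first-moment quantity of the control is the crux: I convert it into $B_\beta^{1/2}$ using Cauchy--Schwarz twice,
\begin{align*}
\E\!\left[\int_t^T\!\!\sum_{i\in\Vc^{t,\lambda;\beta}_u}\!|\beta^i_u|\,du\right]
\leq
\E\!\left[\int_t^T\!\!\sqrt{|\Vc^{t,\lambda;\beta}_u|}\,\sqrt{\textstyle\sum_i|\beta^i_u|^2}\,du\right]
\leq
\sqrt{\E\!\!\int_t^T\!\!|\Vc^{t,\lambda;\beta}_u|\,du}\;\sqrt{B_\beta}\,,
\end{align*}
and use \eqref{eq:non-explosion-moment1_mass} to bound the first factor by a constant. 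Collecting all the estimates produces the promised bound $J(t,\lambda;\beta)\geq c_\psi B_\beta - K_1 - K_2\sqrt{B_\beta}$.

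Finally, minimizing $x\mapsto c_\psi x^2 - K_2 x - K_1$ over $x\geq 0$ gives a finite lower bound, yielding $v(t,\lambda)>-\infty$. On the $\eps$-optimal set, the inequality $c_\psi B_\beta - K_2\sqrt{B_\beta} - K_1 \leq v(t,\lambda) + \eps$ is a quadratic inequality in $\sqrt{B_\beta}$ whose solution set is a bounded interval, so $B_\beta$ is uniformly bounded over $\beta\in\CtrlStandard^\eps_{(t,\lambda)}$. The main delicate step, and the one I would double-check carefully, is the Cauchy--Schwarz conversion from the linear control term $\sum_i|\beta^i|$ produced by the first-moment position bound \eqref{eq:non-explosion-moment1_population} into the quadratic quantity $B_\beta$ appearing in the coercivity hypothesis, since any sharper sublinear growth would destroy the final quadratic inequality.
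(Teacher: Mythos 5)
Your proposal is correct and takes essentially the same route as the paper's proof: a coercive lower bound on $J(t,\lambda;\beta)$ obtained from the left-hand sides of \eqref{eq:coercivity_hyp:big_Psi}--\eqref{eq:coercivity_hyp:psi} together with the moment estimates of \Cref{prop:existence_strong_branching} and \Cref{Lemma:bound_moment2_population}, a finite upper bound on $v(t,\lambda)$ via a constant control, and then absorption of the linear control term $\E[\int_t^T\sum_i|\beta^i_u|\,du]$ into the quadratic one. The only, immaterial, difference is that you perform this absorption by Cauchy--Schwarz at the level of the expectation (producing $\sqrt{B_\beta}$ and a quadratic inequality in $\sqrt{B_\beta}$), whereas the paper uses the pointwise fact that $a\mapsto c_\psi|a|^2-C|a|$ is bounded from below.
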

\begin{proof}
    We use the l.h.s.\ of~\eqref{eq:coercivity_hyp:big_Psi} and~\eqref{eq:coercivity_hyp:psi} along with~\Cref{Lemma:bound_moment2_population} to find a constant $C > 0$ (which may change from line to line) such that, for all $\beta \in \CtrlStandard$,
    \begin{align}
        J(t,\lambda;\beta)
        \geq&\eqsp
        -C \E\left[1 + \sup_{u\in[t,T]}|V_u|^2 +\sup_{u\in[t,T]}\sum_{i\in \Vc^{t,\lambda;\beta}_{u}}\left|Y^{i,\beta}_{u}\right|\right] + c_\psi \E\left[ \int_t^{T} \sum_{i\in \Vc^{t,\lambda;\beta}_{u}}\left|\beta^{i}_{u}\right|^2 du \right]
        \nonumber
        \\
        \geq&\eqsp
        -C \E\left[1 + \int_t^{T} \sum_{i\in \Vc^{t,\lambda;\beta}_{u}}\left|\beta^{i}_{u}\right| du\right]
        + c_\psi \E\left[ \int_t^{T} \sum_{i\in \Vc^{t,\lambda;\beta}_{u}}\left|\beta^{i}_{u}\right|^2 du \right]
        \eqsp.
        \label{eq:bound_a-C_a^2}
    \end{align}
    This already proves $v(t,\lambda)>-\infty$, as the function $a \mapsto c_\psi|a|^2- C|a|$ is bounded from below. To prove the first claim, fix arbitrarily a constant control $\beta^{a_0,i}_s :=a_0 \in A$.~\Cref{Lemma:bound_moment2_population} and~\Cref{prop:existence_strong_branching} imply
    \begin{align*}
        \E\left[\sup_{u\in[t,t+h]}\sum_{i\in \Vc^{t,\lambda;\beta}_u}\left|Y^{i,\beta^{a_0}}_u\right|^2\right]
        \leq&\eqsp 
        C \left(1 + \E\left[ \int_t^{t+h} \sum_{i\in \Vc^{t,\lambda;\beta}_{u}}\left|\beta^{a_0,i}_{u}\right|^2 du \right] \right) \leq C \left(1 + \left|a_0\right|^2 \right)
        \eqsp.
    \end{align*}
    Then, from the r.h.s.\ of~\eqref{eq:coercivity_hyp:big_Psi} and~\eqref{eq:coercivity_hyp:psi}, we have show $J(t,\lambda;\beta^{a_0})<\infty$. Therefore, for  $\beta \in \CtrlStandard^{\eps}_{(t,\lambda)}$, we have $J(t,\lambda;\beta)\leq J(t,\lambda;\beta^{a_0})+\eps$. Combining this with~\eqref{eq:bound_a-C_a^2}, it yields
    \begin{align}
    \label{eq:uniform-bound-beta}
        \sup_{\beta\in \CtrlStandard^{\eps}_{(t,\lambda)}} \E\left[ \int_t^{T} \sum_{i\in \Vc^{t,\lambda;\beta}_{u}}\left(\left|\beta^{i}_{u}\right|^2- C \left|\beta^{i}_{u}\right|\right) du \right]
        <
        \infty\eqsp,
    \end{align}
   which implies~\eqref{eq:bound_eps_opt_control}, by~\Cref{prop:existence_strong_branching}.
\end{proof}

\section{Differential characterization of the control problem}
\label{section:HJB}
\subsection{Dynammic programming principle (DPP)}
A key property for deriving the differential characterization of the control problem \eqref{eq:def:cost_function} is the \emph{Dynamic Programming Principle (DPP)}. This principle asserts that the value function of the optimal cost can be expressed in terms of controlled subproblems, enabling a recursive formulation of the original problem.

The proof of such a result typically relies on a \emph{pseudo-Markov property} together with \emph{measurable selection} result, which allow the application of the DPP in settings with controlled branching dynamics. The property—developed in \citet{claisse2016pseudo} for controlled diffusions and later extended in \citet{claisse18-v1} to branching diffusions—ensures that the system can be ``restarted'' at any stopping time using a control that is independent of the past. The second one allows to construct a measurable function that selects the control from a set of admissible controls, ensuring that the control can be applied at any time an dits proof follows the lines of \citet{kharroubi2024stochastic}.
These two steps are crucial for constructing $\varepsilon$-optimal controls and establishing the DPP.


We provide this result while postponing to \Cref{Appendix:DPP} the technical developments. These are adapted to our setting from the results in \citet{claisse18-v1} and \citet{kharroubi2024stochastic}.

\begin{proposition}[DPP]
\label{Prop:DPP}
    Let $(t,\lambda)\in [0,T]\times E$ and $\beta\in\CtrlStandard$. Suppose Assumption H\ref{hypH:model_parameters}-H\ref{hypH:coercivity_hyp} holds. Fix $\tau\in\Tc_{t,T}$. Then,
    \begin{align}
    \label{eq:DPP}
        v(t,\lambda) = \inf_{\beta\in\CtrlStandard} \E\left[
            \int_t^\tau \sum_{i\in \Vc^{t,\lambda;\beta}_s} \psi\left(i,Y^{i,\beta}_s,\xi^{t,\lambda;\beta}_s,\beta^i_s\right)ds +  v\left(\tau, \xi^{t,\lambda;\beta}_\tau \right)\middle|\xi^{t,\lambda;\beta}_t=\lambda
        \right]\eqsp.
    \end{align}
\end{proposition}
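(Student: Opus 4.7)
The plan is to establish the two standard inequalities separately, combining a pseudo-Markov property with a measurable selection argument. Throughout, write $v^\tau(t,\lambda;\beta)$ for the expectation on the right-hand side of \eqref{eq:DPP} for a given control $\beta$, so that the DPP reads $v(t,\lambda) = \inf_{\beta\in\CtrlStandard} v^\tau(t,\lambda;\beta)$.

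\textbf{Step 1: the inequality $v(t,\lambda)\leq \inf_\beta v^\tau(t,\lambda;\beta)$.} Fix $\beta\in\CtrlStandard$ and $\eps>0$. The key step is to select, in a $\Fc_\tau$-measurable way, an $\eps$-optimal control $\tilde\beta^\omega$ for the sub-problem starting at $(\tau(\omega),\xi^{t,\lambda;\beta}_\tau(\omega))$. Since $E$ is a countable disjoint union of Euclidean spaces indexed by $\Igen$, after conditioning on the configuration $\Vc^{t,\lambda;\beta}_\tau$ the state lies in a standard Borel space, so the measurable selection theorem (in the form used in \citet{kharroubi2024stochastic}) yields a jointly measurable map $(s,\mu)\mapsto \tilde\beta(s,\mu)$ with $J(s,\mu;\tilde\beta(s,\mu))\leq v(s,\mu)+\eps$. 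I would then define the concatenated control $\beta\otimes_\tau\tilde\beta$ equal to $\beta$ on $[t,\tau]$ and to $\tilde\beta(\tau,\xi^{t,\lambda;\beta}_\tau)$ on $(\tau,T]$, reindexed along the genealogy at $\tau$; the hypothesis \eqref{eq:bound_sup_beta_2} is preserved thanks to the admissibility of $\tilde\beta$ and the moment bound \eqref{eq:non-explosion-moment1_mass}. Applying the cost functional to this concatenation, using the pseudo-Markov property (see Step 2 below) to identify the continuation as a branching diffusion starting at $(\tau,\xi^{t,\lambda;\beta}_\tau)$, gives $v(t,\lambda)\leq J(t,\lambda;\beta\otimes_\tau\tilde\beta)\leq v^\tau(t,\lambda;\beta)+\eps$. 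Taking infimum over $\beta$ and letting $\eps\to 0$ yields the inequality.

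\textbf{Step 2: the inequality $v(t,\lambda)\geq \inf_\beta v^\tau(t,\lambda;\beta)$.} Here I would rely on the pseudo-Markov property of the controlled branching diffusion, which follows from the construction of the process via the independent families $\{W^i\}$ and $\{Q^i\}$ indexed along the Ulam--Harris--Neveu tree (analogous to Proposition 3.5 in \citet{claisse18-v1}): conditionally on $\Fc_\tau$, the shifted process $(\xi^{t,\lambda;\beta}_{\tau+\cdot})$ is a controlled branching diffusion starting at $\xi^{t,\lambda;\beta}_\tau$ and driven by the control $\beta^\tau_s:=\beta_{\tau+s}$, using the independent noises $(W^i_{\tau+\cdot}-W^i_\tau,\,Q^i(\tau+\cdot,\cdot))$, appropriately relabelled. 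This identification, together with the tower property, gives
\begin{align*}
J(t,\lambda;\beta)=\E\Bigl[\int_t^\tau \!\!\sum_{i\in\Vc^{t,\lambda;\beta}_s}\psi\bigl(i,Y^{i,\beta}_s,\xi^{t,\lambda;\beta}_s,\beta^i_s\bigr)ds + J\bigl(\tau,\xi^{t,\lambda;\beta}_\tau;\beta^\tau\bigr)\Bigr]\geq v^\tau(t,\lambda;\beta),
\end{align*}
since $J(\tau,\xi^{t,\lambda;\beta}_\tau;\beta^\tau)\geq v(\tau,\xi^{t,\lambda;\beta}_\tau)$ pointwise. Infimizing over $\beta$ delivers the claim.

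\textbf{Main obstacle.} The genuine difficulty lies in the measurable selection underpinning Step 1. The state space $E$ is a countable disjoint union of heterogeneous Euclidean pieces indexed by the admissible configurations $\Igen$, and the tree structure of the labels forces the selection to be compatible with the relabelling induced by branching events occurring before $\tau$. One must therefore check that $(t,\lambda)\mapsto v(t,\lambda)$ is measurable (or at least that its upper epigraph is analytic, so the Jankov--von Neumann selection theorem applies), that the map $\omega\mapsto(\tau(\omega),\xi^{t,\lambda;\beta}_\tau(\omega))$ takes values in a standard Borel space, and that the integrability \eqref{eq:bound_sup_beta_2} is preserved after concatenation; the uniform moment bound from \Cref{prop:bound_eps_opt_control}, together with \eqref{eq:non-explosion-moment1_mass}, guarantees this last point. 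Once these technicalities are settled in the appendix, the two inequalities combine to give \eqref{eq:DPP}.
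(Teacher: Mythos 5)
Your proposal is correct and follows essentially the same route as the paper: one inequality via the pseudo-Markov property applied directly to the cost functional (the paper's Corollary \ref{corollary:App-DPP:pseudo_markov}), and the converse via the measurable selection of $\varepsilon$-optimal continuation controls (\Cref{prop:App-DPP:measurable_selection}) followed by concatenation at $\tau$ and another application of the pseudo-Markov property. The technical obstacles you flag — measurability of $v$, the Borel structure of $E$ as a countable union of Euclidean pieces, and preservation of admissibility under concatenation — are exactly the points the paper defers to \Cref{Appendix:DPP}.
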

\begin{proof}
    \emph{Step 1: Upper bound.}  
    Let $\varepsilon > 0$. By definition of the value function, there exists an $\varepsilon$-optimal control ${}^\varepsilon\beta \in \CtrlStandard$ such that
    \begin{align*}
        v(t,\lambda) + \varepsilon \geq J\l(t,\lambda;{}^\varepsilon\beta\r).
    \end{align*}
    From \Cref{corollary:App-DPP:pseudo_markov}, we get
    \begin{align*}
        v(t,\lambda) + \varepsilon
        \geq&~
        \int_\Omega \left(
            \int_t^{\tau(\omega)} 
            \sum_{i\in \Vc^{t,\lambda;{}^\varepsilon\beta}_s(\omega)}
            \psi\left(
                i,Y^{i,{}^\varepsilon\beta}_s(\omega),\xi^{t,\lambda;{}^\varepsilon\beta}_s(\omega),({}^\varepsilon\beta^{\tau(\omega), \omega})^i_s
            \right)ds + J\left(
                \tau(\omega),\xi^{t,\lambda;{}^\varepsilon\beta}_\tau(\omega);
                {}^\varepsilon\beta^{\tau(\omega), \omega}
            \right)
        \right)\P(d\omega)
        \\
        \geq&~
        \E\left[ 
            \int_t^{\tau} 
            \sum_{i\in \Vc^{t,\lambda;{}^\varepsilon\beta}_s}
            \psi\left(
                i,Y^{i,{}^\varepsilon\beta}_s,\xi^{t,\lambda;{}^\varepsilon\beta}_s,({}^\varepsilon\beta)^i_s
            \right)ds
            + v\left(\tau, \xi^{t,\lambda;{}^\varepsilon\beta}_\tau \right)
        \right]\eqsp.
    \end{align*}

    \emph{Step 2: Lower bound.} We now address the converse inequality. The standard strategy consists in constructing an admissible control on $[t,T]$ by gluing together any given control on $[t,\tau]$ with suitably chosen controls after $\tau$.

    Denote $\hat{v}(t,\lambda)$ the r.h.s.\ of~\eqref{eq:DPP}. Let now ${}^\varepsilon\beta \in \CtrlStandard$ be such that
    \begin{align}
    \label{eq:DPP:upper_bound_hat_v}
        \E\left[
            \int_t^\tau \sum_{i\in \Vc^{t,\lambda;{}^\varepsilon\beta}_s} \psi\left(i,Y^{i}_s,\xi^{t,\lambda;{}^\varepsilon\beta}_s,{}^\varepsilon\beta^i_s\right)ds +  v\left(\tau, \xi^{t,\lambda;{}^\varepsilon\beta}_\tau \right)\middle|\xi^{t,\lambda;{}^\varepsilon\beta}_t=\lambda
        \right] \leq \hat{v}(t,\lambda) + \varepsilon\eqsp.
    \end{align}
    Consider the probability measure $\nu$ induced on $[0,T]\times E$ by $\omega\mapsto (\tau(\omega), \xi^{t,\lambda;{}^\varepsilon\beta}_\tau(\omega))$ and let $\phi_\nu$ be the Borel-measurable function provided by \Cref{prop:App-DPP:measurable_selection}. From the definition of $\mathcal{U}_\varepsilon$, we have
    \begin{align}
    \label{eq:DPP:upper_bound}
        v\left(\hat{t},\hat{\lambda}\right) + \varepsilon \geq J\left(\hat{t}, \hat{\lambda}; \phi_\nu(\hat{t}, \hat{\lambda})\right)\eqsp,
    \end{align}
    for all $(\hat{t}, \hat{\lambda}) \in [0,T]\times E\setminus N^\nu$, where $N^\nu$ is a negligible set with respect to $\nu$. 
    Define $\Theta:=(\tau, \xi^{t,\lambda;{}^\varepsilon\beta}_\tau)$
    and $N := \Theta^{-1}\l(N^\nu\r)$.
    Using \eqref{eq:DPP:upper_bound}, we get
    \begin{align*}
        &v\left(\Theta(\omega)\right) + \varepsilon
        \\
        &\geq
        J\left(\Theta(\omega); \phi_\nu(\Theta(\omega))\right)
        \\
        &=
        \int_\Omega
        \left(
            \int_{\tau(\omega)}^T
            \sum_{i\in \Vc^{\Theta(\omega); \phi_\nu(\Theta(\omega))}_s(\omega^\prime)}
            \psi\left(
                i,\eqsp
                Y^{i,\phi_\nu(\Theta(\omega))}_s(\omega^\prime),\eqsp
                \xi^{\Theta(\omega); \phi_\nu(\Theta(\omega))}_s(\omega^\prime),\eqsp
                \phi_\nu(\Theta(\omega))^i_s(\omega^\prime)
            \right)ds + g\left(
                \xi^{\Theta(\omega); \phi_\nu(\Theta(\omega))}_T(\omega^\prime)
            \right)
        \right)\P(d\omega^\prime)
        %
        \eqsp,
    \end{align*}
    for $\omega\in\Omega\setminus N$.
    Since $\phi_\nu(\Theta)$ is independent of $\Fc_\tau$, we can apply the pseudo-Markov property from \Cref{lemma:App-DPP:pseudo_markov} together with \eqref{eq:DPP:upper_bound_hat_v} to obtain
    \begin{align*}
        \hat{v}(t,\lambda) + 2\varepsilon\geq
        \E\left[
            \int_t^T \sum_{i\in \Vc^{t,\lambda;{}^\varepsilon\tilde\beta}_s} \psi\left(i,Y^{i}_s,\xi^{t,\lambda;{}^\varepsilon\tilde\beta}_s,{}^\varepsilon\tilde\beta^i_s\right)ds +  g\left(\xi^{t,\lambda;{}^\varepsilon\tilde\beta}_T \right)\middle|\xi^{t,\lambda;{}^\varepsilon\tilde\beta}_t=\lambda
        \right]
        \geq v(t,\lambda)
        \eqsp,
    \end{align*}
    with ${}^\varepsilon\tilde\beta$ the following control
    \begin{align*}
        {}^\varepsilon\tilde{\beta}^i_s(\omega)
        :=
        {}^\varepsilon\beta^i_s(\omega) \eqsp\1_{[t,\tau(\omega))}+
        \phi_\nu^i(\Theta(\omega))(\omega)\1_{[\tau(\omega),T]}
        \eqsp,\qquad \P\text{--a.s.},
    \end{align*}
    for $s \in [t,T]$ and $i \in \Ic$.
\end{proof}

\subsection{Verification theorem}
\label{Section:verification_theorem}
Before establishing a verification theorem based on the differential characterization of the control problem, we first present a version that relies solely on \emph{martingality conditions}. This approach is standard in the control literature \citep[see, \eg, Lemma 2.1 of][]{Pham:Conditional_LQ} and provides a more general and potentially less technical route for validating candidate value functions. Specifically, the goal is to identify a suitably regular function of the form $(t, \lambda) \mapsto \varphi(t, \lambda)$ such that, when applying the semimartingale decomposition given by~\eqref{SDE:strong}, the finite variation term in the corresponding Itô-type formula is nonnegative for all admissible controls $\beta \in \CtrlStandard$, and vanishes for at least one control $\bar{\beta}$. The proof of the following is deferred to \Cref{Appendix:verification-thm}.


\begin{proposition}\label{Prop:verification2}
    Let $w\in C^0([0,T]\times E)$ such that there exists a constant $C_w>0$ such that
    \begin{align}\label{eq:verification_thm2:growth_w}
        -C_w\left(1+\langle1,\lambda\rangle+\langle|\cdot|,\lambda\rangle\right) \leq w(t,\lambda)\leq C_w\left(1+\langle1,\lambda\rangle^2+\langle|\cdot|^2,\lambda\rangle\right)
        \eqsp,
        \qquad\text{ for } (t,\lambda)\in [0,T]\times E
        \eqsp.
    \end{align}
    Fix $(\bar{t},\bar \lambda)\in [0,T]\times E$. If we have that
    \begin{enumerate}[(i)]
        \item $w (T,\lambda) = \Psi(\lambda)$, for $\lambda \in E$;
        \item $
            \left\{
                w\l(s,\xi^{\bar{t},\bar \lambda;\beta}_s\r)
                +\displaystyle
                \int_{\bar{t}}^s \sum_{i\in \Vc^{\bar{t},\bar \lambda;\beta}_u}
                \psi\l(i,Y^{i,\beta}_u,\xi^{\bar{t},\bar \lambda;\beta}_u,\beta^i_u\r)du\eqsp:\eqsp s\in \l[\bar{t},T\r]
            \right\}$  is a $\P$-local submartingale, for $\beta \in \CtrlStandard$;
        \item there exists $\bar{\beta}\in \CtrlStandard$ such that
        $\left\{
            w\left(s,\xi^{\bar{t},\bar \lambda;\bar\beta}_s\right)
            +\displaystyle
            \int_{\bar{t}}^s \sum_{i\in \Vc^{\bar{t},\bar \lambda;\beta}_u}
            \psi\left(i,Y^{i,\bar\beta}_u,\xi^{\bar{t},\bar \lambda;\bar\beta}_u,\bar \beta^i_u\right)du\eqsp:\eqsp s\in \l[\bar{t},T\r]
        \right\}$ is a $\P$-local martingale.
    \end{enumerate}
    Then, $\bar\beta$ is an optimal control for $v(\bar{t},\bar\lambda)$, \ie, $v(\bar{t},\bar\lambda) = J(\bar{t},\bar\lambda;\bar\beta)$, and $v(\bar{t},\bar\lambda)=w(\bar{t},\bar\lambda)$.
\end{proposition}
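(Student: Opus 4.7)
The plan is a standard verification strategy: I will use localization of the (sub)martingale together with the terminal condition (i) to compare $w(\bar t,\bar\lambda)$ against $J(\bar t,\bar\lambda;\beta)$ for every admissible $\beta\in\CtrlStandard$. Condition (ii) should give $w(\bar t,\bar\lambda) \leq J(\bar t,\bar\lambda;\beta)$ for all $\beta$, while (iii) should turn this inequality into equality at $\bar\beta$. Taking the infimum in $\beta$ then simultaneously identifies $w(\bar t,\bar\lambda) = v(\bar t,\bar\lambda)$ and exhibits $\bar\beta$ as an optimal control.

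Concretely, for $\beta\in\CtrlStandard$ I would set
\begin{align*}
    M^\beta_s := w\bigl(s,\xi^{\bar t,\bar\lambda;\beta}_s\bigr) + \int_{\bar t}^s \sum_{i\in \Vc^{\bar t,\bar\lambda;\beta}_u} \psi\bigl(i,Y^{i,\beta}_u,\xi^{\bar t,\bar\lambda;\beta}_u,\beta^i_u\bigr)du,
\end{align*}
pick a localizing sequence $(\tau_n)_n\subset\Tc_{\bar t,T}$ with $\tau_n\uparrow T$ provided by (ii), and read off the submartingale inequality $w(\bar t,\bar\lambda)=M^\beta_{\bar t}\leq\E[M^\beta_{\tau_n}]$. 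One may assume $J(\bar t,\bar\lambda;\beta)<+\infty$, since otherwise the target inequality is trivial by finiteness of $w(\bar t,\bar\lambda)$ (itself guaranteed by \eqref{eq:verification_thm2:growth_w} and the moment estimates of \Cref{prop:existence_strong_branching}). Under this restriction, the coercivity \eqref{eq:coercivity_hyp:psi} combined with the first-moment estimates of \Cref{prop:existence_strong_branching}, exactly as in the proof of \Cref{prop:bound_eps_opt_control}, forces $\E\bigl[\int_{\bar t}^T\sum_{i\in\Vc^{\bar t,\bar\lambda;\beta}_u}|\beta^i_u|^2 du\bigr]<\infty$. \Cref{Lemma:bound_moment2_population} then provides a uniform bound on $\E\bigl[\sup_{s\in[\bar t,T]}\langle|\cdot|^2,\xi^{\bar t,\bar\lambda;\beta}_s\rangle\bigr]$ and \eqref{eq:non-explosion-moment2_mass} on $\E[\sup_s\langle 1,\xi_s\rangle^2]$, which together dominate the polynomial growth of $w$ in \eqref{eq:verification_thm2:growth_w} uniformly in $n$. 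Dominated convergence, continuity of $w$, and the càdlàg property of $s\mapsto \xi^{\bar t,\bar\lambda;\beta}_s$ at $T$ then allow the passage to the limit, and the terminal condition (i) yields $w(\bar t,\bar\lambda)\leq J(\bar t,\bar\lambda;\beta)$.

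Replaying the same scheme with $\bar\beta$ and the local martingale property (iii) turns every submartingale inequality into an equality and delivers $w(\bar t,\bar\lambda)=J(\bar t,\bar\lambda;\bar\beta)$; combining with the previous step yields $w(\bar t,\bar\lambda)=\inf_{\beta\in\CtrlStandard}J(\bar t,\bar\lambda;\beta)=v(\bar t,\bar\lambda)$ and identifies $\bar\beta$ as optimal. The main difficulty is the passage to the limit in $\E[w(\tau_n,\xi^{\bar t,\bar\lambda;\beta}_{\tau_n})]$: because $w$ may grow quadratically in both the total mass $\langle 1,\lambda\rangle$ and the spatial second moment $\langle|\cdot|^2,\lambda\rangle$, uniform integrability of that family is not automatic, and it is precisely here that the second-moment estimate of \Cref{Lemma:bound_moment2_population}, together with the reduction to controls with finite cost via the coercivity of $\psi$, becomes indispensable.
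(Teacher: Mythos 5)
Your proposal is correct and follows essentially the same route as the paper's proof: localize via the submartingale property, control the family $\{M^\beta_{T\wedge\tau_n}\}_n$ through the growth bound \eqref{eq:verification_thm2:growth_w}, the coercivity \eqref{eq:coercivity_hyp:psi} and the moment estimates of \Cref{prop:existence_strong_branching} and \Cref{Lemma:bound_moment2_population}, pass to the limit, and use the terminal condition (i); the only cosmetic difference is that the paper restricts to the $\eps$-optimal class $\CtrlStandard^{\eps}_{(\bar t,\bar\lambda)}$ of \Cref{prop:bound_eps_opt_control} where you restrict to finite-cost controls, which is equivalent for taking the infimum. One small caveat: in the equality step for $\bar\beta$, dominated convergence presupposes $\E[\int_{\bar t}^T\sum_{i}|\bar\beta^i_u|^2\,du]<\infty$, which is not granted a priori by $\bar\beta\in\CtrlStandard$; the clean way to close this (as the paper does) is to apply Fatou's lemma to $M^{\bar\beta}_{T\wedge\tau_n}$, which is bounded below by an integrable variable thanks to the lower bounds in \eqref{eq:verification_thm2:growth_w} and \eqref{eq:coercivity_hyp:psi}, yielding $J(\bar t,\bar\lambda;\bar\beta)\leq w(\bar t,\bar\lambda)$ directly and hence the chain of equalities.
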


While this martingale-based condition can, in principle, be used to verify optimality without requiring a full differential characterization of the value function, in practice, constructing such an optimal control $\bar{\beta}$ almost always leverages a more explicit analytical (typically PDE-based) characterization.

\paragraph{Homeomorphisms with $\sqcup_{\Vc\in\Igen}\R^{d|\Vc|}$.}

The DPP paves the way for a differential characterization of the value function, which is essential for analyzing the associated optimization problem. By considering measures that belong to the space $E$, one can leverage the differential structure of different Euclidean spaces $\mathbb{R}^\ell$ for some $\ell \in \mathbb{N}$, coupled through the underlying tree structure typical of these processes. This approach, already employed in, \eg, \citet{claisse18}, \citet{kharroubi2024stochastic}, and \citet{kharroubi2024optimal} arises as a specific instance of a more general bijection—namely, the one connecting the space $E$ to the disjoint topological union $\sqcup_{\Vc\in\Igen}\R^{d|\Vc|}$, similarly to the description of the branching process used in \citet{Ustunel}.

Define the map $\iota$ by
\begin{align*}
    \iota: E \ni \sum_{i \in \bar{\Vc}} \delta_{(i, x_i)} &\mapsto \Vec{x}_{\bar{\Vc}} = (x_i)_{i \in \bar{\Vc}}
    \in \bigsqcup_{\Vc \in \Igen} \mathbb{R}^{d|\Vc|}
    \eqsp,
\end{align*}
where the vector $\Vec{x}_\Vc=(x_i)_{i\in \Vc}$ is ordered according to the total order $\leq$ on $\Ic$.
This map associates to each element $\lambda = \sum_{i \in \Vc} \delta_{(i, x_i)} \in E$, a vector in $\mathbb{R}^{d|\Vc|}$ at index with $\Vc \in \Igen$, that contains the positions of the points in $\Vc$. Viewing the processes as functions defined on $\sqcup_{\Vc \in \Igen} \mathbb{R}^{d|\Vc|}$ allows us to leverage the differential structure of these spaces, opening for a differential analysis in infinite coupled system indexed in $\Igen$.





For $\Vc\in\Igen$, define $v_\Vc:[0,T]\times \R^{d|\Vc|}\to \R$ as
\begin{align}\label{eq:def:v_Vc}
    v_\Vc(t,x_1,\dots,x_{|\Vc|}):=v\left(t,\sum_{i\in\Vc}\delta_{(i,x_i)}\right) = v\left(t,\iota^{-1}(\Vec{x}_\Vc)\right)\eqsp.
\end{align}
Analogously, we define $(\mathfrak{b}_\Vc,\Sigma_\Vc) : \R^{d|\Vc|}\times A^{|\Vc|}\to \R^{d|\Vc|}\times\R^{d|\Vc|\times d^\prime |\Vc|}$ as
\begin{align*}
    \mathfrak{b}_\Vc\left(\Vec{x}_\Vc, \Vec{a}_\Vc\right) := 
    \bigg(
        b\left(i,x_i, \iota^{-1}(\Vec{x}_\Vc),a_i\right)
    \bigg)_{i\in\Vc}, \qquad
    \Sigma_\Vc\left(\Vec{x}_\Vc, \Vec{a}_\Vc\right) :=
    \text{Diag}_{|\Vc|}\l(\bigg(
        \sigma\left(i,x_i, \iota^{-1}(\Vec{x}_\Vc),a_i\right)
    \bigg)_{i\in\Vc}\r)\eqsp,
\end{align*}
where the matrix $\text{Diag}_m$ is a diagonal matrix of size $dm\times d^\prime m$, for $m\in\N$.
For any $\Vc\in\Igen$, we define the generator $\Lb_\Vc$ as
\begin{align*}
    \Lb_\Vc v_\Vc\left(t,\Vec{x}_\Vc, \Vec{a}_\Vc\right)
    &:= \mathfrak{b}_\Vc\left(\Vec{x}_\Vc, \Vec{a}_\Vc\right)^\top D v_\Vc\left(t,\Vec{x}_\Vc\right) +  \frac{1}{2}\text{Tr}\left(
        \Sigma_\Vc(\Sigma_\Vc)^\top\left(\Vec{x}_\Vc, \Vec{a}_\Vc\right)D^2 v_\Vc\left(t,\Vec{x}_\Vc\right)
    \right)
    \\
    &~~~+\sum_{i\in\Vc} \gamma\left(i,x_i, \iota^{-1}(\Vec{x}_\Vc),a_i\right) \l(\sum_{k\geq 0}v_{\Vc^i_k}\Biggl(t,\mathfrak{e}_{\Vc,k}^i (\Vec{x}_\Vc)\r)
        p_k\left(i,x_i, \iota^{-1}(\Vec{x}_\Vc),a_i\right) - v_\Vc\left(t,\Vec{x}_\Vc\right)\Biggr)
        \eqsp,
\end{align*}
with $\mathfrak{e}_{\Vc,k}^i$ is defined as
\begin{align*}
    \mathfrak{e}_{\Vc,k}^i:\R^{|\Vc|}\to&~\R^{|\Vc|+k-1}
    \\
    \vec{x}_\Vc \mapsto &~\big(x_1,\dots,x_{i-1}, \underbrace{x_i, \dots,x_i}_{k-\text{times}},x_{i+1},\dots,x_m\big)^\top 
\end{align*}
and $\Vc^i_k$ as defined in \eqref{eq:def:V_i_k}. Moreover, define also its associated Hamiltonian $\Hb_\Vc$ as
\begin{align}
\label{eq:def-Hamiltonian}
    \begin{split}
        \Hb_\Vc: \R^{d|\Vc|}\times \R\times \R^{d|\Vc|}\times \S^{d|\Vc|}\times \R^{\Vc\times \N} &\to \R\\
        \big(\Vec{x}_\Vc,r, q_\Vc,M_\Vc,(r_{(i,\ell)})_{i\in\Vc,\ell\in\N}\big)&\mapsto 
        \inf_{\Vec{a}_\Vc\in A^{|\Vc|}}\Bigg\{
            \mathfrak{b}_\Vc\left(\Vec{x}_\Vc, \Vec{a}_\Vc\right)^\top q_\Vc
            +  \frac{1}{2}\text{Tr}\left(
                \Sigma_\Vc(\Sigma_\Vc)^\top\left(\Vec{x}_\Vc, \Vec{a}_\Vc\right)D^2 M_\Vc
            \right)
        \\&\qquad\qquad\qquad
            + \sum_{i\in\Vc} \gamma\left(i,x_i, \iota^{-1}(\Vec{x}_\Vc),a_i\right)\Bigg(\sum_{k\geq 0}r_{(i,k)}\eqsp p_k\left(i,x_i, \iota^{-1}(\Vec{x}_\Vc),a_i\right)- r \Bigg)
        \\&\qquad\qquad\qquad\qquad\qquad\qquad\qquad\qquad\qquad\qquad\qquad
            +\sum_{i\in\Vc} \psi\left(i,x_i, \iota^{-1}(\Vec{x}_\Vc),a_i\right)
        \Bigg\}
        \eqsp,
    \end{split}
\end{align}
with $\S^{d|\Vc|}$ being the set of symmetric matrices of dimension $d|\Vc|\times d|\Vc|$.
This generator will be used in the HJB equation \eqref{eq:HJB} that characterizes the value function as we will prove in \Cref{prop:viscosity_solution} and \Cref{prop:comparison_principle}.

\begin{remark}\label{Rmk:diffusion_between_branching_events}
    These notations look like the one used in the proof of~\Cref{prop:existence_strong_branching}. As seen in their construction, branching processes behave as diffusion processes between two different branching events, that are defined via a Poisson random measure independent of each Brownian motion. This is why the first two terms of $\Lb_\Vc$ are Itô's-like terms while the last one takes into account what happens in the branching events.
\end{remark}

\begin{theorem}\label{Thm:Verification_Theorem}
    Let $w\in C^0\left([0,T]\times E\right)$ such that
    \begin{align}\label{eq:verification_thm:growth_w}
        -C_w\left(1+\langle1,\lambda\rangle+\langle|\cdot|,\lambda\rangle\right)
        \leq
        w(t,\lambda)\leq C_w\left(1+\langle1,\lambda\rangle^2+\langle|\cdot|^2,\lambda\rangle\right)\eqsp.
    \end{align}
    for some constant $C_w>0$.
    Assume that $(w_\Vc)_{\Vc\in\Igen}$, defined as in~\eqref{eq:def:v_Vc}, is in $C^{1,2}\left([0,T]\times \R^{d|\Vc|}\right)$, for $\Vc\in\Igen$.
    \begin{enumerate}[(i)]
        \item If we have that
            \begin{align}
                \label{eq:verification_thm:terminal_cond}
                \begin{split}
                    -\partial_t w_\Vc\left(t,\Vec{x}_\Vc\right)
                    - \inf_{\Vec{a}_\Vc\in A^{|\Vc|}}\left\{\Lb_\Vc w_\Vc\left(\Vec{x}_\Vc, \Vec{a}_\Vc\right) + \sum_{i\in\Vc} \psi\left(i,x_i, \iota^{-1}(\Vec{x}_\Vc),a_i\right) \right\}\leq&\eqsp 0\eqsp,
                    \\
                    w_\Vc\left(T,\Vec{x}_\Vc\right) \leq&\eqsp \Psi\left(\iota^{-1}\left(\Vec{x}_\Vc\right)\right)\eqsp,
                \end{split}
            \end{align}
        for $\Vc\in\Igen$, $t\in [0,T]$, and $\Vec{x}_\Vc\in \R^{d|\Vc|}$, then $w \leq v$ on $[0,T]\times E$.
        \item Suppose, in addition to \eqref{eq:verification_thm:terminal_cond}, that $w_\Vc(T,\Vec{x}_\Vc) =\Psi(\iota^{-1}(\Vec{x}_\Vc))$, for $\Vc\in\Igen$, and $\Vec{x}_\Vc\in \R^{d|\Vc|}$, and there exist measurable functions $\Vec{\mathfrak{a}}_\Vc:[0,T)\times \R^{d|\Vc|}\to A^{|\Vc|}$, for $\Vc\in\Igen$, such that
        \begin{align}
        \label{eq:verification_thm:inf_condition}
            \begin{split}
                &-\partial_t w_\Vc\left(t,\Vec{x}_\Vc\right)
                - \inf_{\Vec{a}_\Vc\in A^{|\Vc|}}\left\{\Lb_\Vc w_\Vc\left(\Vec{x}_\Vc, \Vec{a}_\Vc\right) + \sum_{i\in\Vc} \psi\left(i,x_i, \iota^{-1}(\Vec{x}_\Vc),a_i\right) \right\} \\
                &=\eqsp-\partial_t w_\Vc\left(t,\Vec{x}_\Vc\right)
                - \left\{\Lb_\Vc w_\Vc\left(\Vec{x}_\Vc, \Vec{\mathfrak{a}}_\Vc\left(t,\Vec{x}_\Vc\right)\right) + \sum_{i\in\Vc} \psi\left(i,x_i, \iota^{-1}(\Vec{x}_\Vc)\eqsp,(\Vec{\mathfrak{a}}_\Vc)_i\left(t,\Vec{x}_\Vc\right)\right) \right\}=0\eqsp.
            \end{split}
        \end{align}
        Defining $\bar{\beta}$ as 
        \begin{align*}
            \bar{\beta}^i_s
            := 
            \sum_{\Vc\in\Igen\eqsp :\eqsp i\in\Vc}(\Vec{\mathfrak{a}}_\Vc)_i\left(t,\iota\left(\xi^{t,\lambda;\bar{\beta}}_s\right)\right)\1_{\xi^{t,\lambda;\bar{\beta}}_s = \Vc}
            +
            a_0\1_{i\notin \xi^{t,\lambda;\bar{\beta}}_s}
            \eqsp,
        \end{align*}
        we assume that the following SDE admits a unique solution
        \begin{align*}
            \langle\varphi,\xi^{\bar{\beta}}_s\rangle =& \langle\varphi,\lambda\rangle+ \int_t^s\sum_{i\in \Vc^{t,\lambda;\beta}_u}D \varphi(Y^{i,\bar{\beta}}_u)^\top\sigma\left(i,Y^{i,\bar{\beta}}_u,\xi^{\bar{\beta}}_u,
            \hat\beta^i_u
            \right)dB^i_u
            +\int_t^s\sum_{i\in \Vc^{t,\lambda;\beta}_u}L \varphi\left(Y^{i,\bar{\beta}}_u,\xi^{\bar{\beta}}_u,\hat\beta^i_u \right)du\\
            & + \int_{(t,s]\times\R_+}\sum_{i\in \Vc^{t,\lambda;\beta}_{u-}}\sum_{k\geq0}
            (k-1)\varphi(Y^{i,\hat\beta}_{u-}) \1_{I_k\left(i,Y^{i,\bar{\beta}}_{u-},\xi^{\bar{\beta}}_{u-}, \hat\beta^i_{u} \right)}(z)
            Q^i(dudz)\eqsp,
        \end{align*}
       and, that $\bar{\beta}\in \CtrlStandard$, for $(t,\lambda)\in [0,T]\times E$. Then, $w=v$ on $[0,T]\times E$, and $\bar{\beta}$ is an optimal Markov control.
    \end{enumerate}
\end{theorem}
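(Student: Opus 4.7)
The plan is to apply an It\^o-type formula to $w(s,\xi^{t,\lambda;\beta}_s)$ along the controlled branching trajectory, using the homeomorphism $\iota$ to transport each excursion between branching events onto a smooth function $w_\Vc$ on $[0,T]\times \R^{d|\Vc|}$. Between two jumps of the family $\{Q^i\}_{i\in\Vc}$ the configuration remains in the leaf indexed by $\Vc$, where $w_\Vc\in C^{1,2}$ and the state follows the It\^o SDE with coefficients $(\mathfrak{b}_\Vc,\Sigma_\Vc)$; at a jump corresponding to particle $i$ producing $k$ offspring the state moves from $\Vc$ to $\Vc^i_k$ and is relabelled through $\mathfrak{e}^i_{\Vc,k}$, producing precisely the branching term in $\Lb_\Vc$ (cf.~\Cref{Rmk:diffusion_between_branching_events}). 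Assembling these pieces via the Poisson random measures $Q^i$ yields
\begin{align*}
    w(s,\xi^{t,\lambda;\beta}_s) = w(t,\lambda) + M_s + \int_t^s \Big(\partial_t w_{\mathcal{V}_u} + \Lb_{\mathcal{V}_u} w_{\mathcal{V}_u}\bigl(\iota(\xi^{t,\lambda;\beta}_u),(\beta_u^i)_{i\in\mathcal{V}_u}\bigr)\Big)du,
\end{align*}
with $\mathcal{V}_u := \Vc^{t,\lambda;\beta}_u$ and $M$ a local martingale coming from the Brownian integrals and the compensated branching integrals.

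For part (i), inequality \eqref{eq:verification_thm:terminal_cond} applied pointwise with $\vec{a}=(\beta^i_u)_{i\in\Vc}$ yields $\partial_t w_\Vc + \Lb_\Vc w_\Vc + \sum_{i\in\Vc}\psi \geq 0$. Introduce a localizing sequence $\tau_n\uparrow T$ that bounds $|\Vc^{t,\lambda;\beta}_s|$, $\sum_i|Y^{i,\beta}_s|$ and the cumulative cost $\int_t^s\sum_i|\beta^i_u|^2du$, so that $M_{\cdot\wedge \tau_n}$ is a true martingale and $\tau_n\to T$ almost surely by \Cref{prop:existence_strong_branching} and \Cref{Lemma:bound_moment2_population}. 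Taking expectations eliminates $M_{\tau_n}$ and gives
\begin{align*}
    w(t,\lambda) \leq \E\left[\int_t^{\tau_n}\sum_{i\in \Vc^{t,\lambda;\beta}_s}\psi\bigl(i,Y^{i,\beta}_s,\xi^{t,\lambda;\beta}_s,\beta^i_s\bigr)ds + w\bigl(\tau_n,\xi^{t,\lambda;\beta}_{\tau_n}\bigr)\right].
\end{align*}
Without loss of generality $\beta$ has finite quadratic cost, otherwise $J(t,\lambda;\beta)=+\infty$ and the inequality is trivial. Letting $n\to\infty$, the envelope \eqref{eq:verification_thm:growth_w} combined with \eqref{eq:non-explosion-moment2_mass}, \eqref{eq:non-explosion-moment1_population} and \eqref{eq:non-explosion-moment2_population} supplies the uniform integrability of $w(\tau_n,\xi^{t,\lambda;\beta}_{\tau_n})$, while the coercivity in \eqref{eq:coercivity_hyp:psi} controls the running-cost integral through monotone convergence on its negative and positive parts. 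Using the terminal bound $w(T,\cdot)\leq \Psi$, this gives $w(t,\lambda)\leq J(t,\lambda;\beta)$ for every $\beta\in\CtrlStandard$, hence $w\leq v$.

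For part (ii), the same expansion applied along $\xi^{t,\lambda;\bar\beta}$, together with \eqref{eq:verification_thm:inf_condition} and the selector property of $\Vec{\mathfrak{a}}_\Vc$, turns the drift integrand into exactly $-\sum_i\psi(i,Y^{i,\bar\beta}_s,\xi^{t,\lambda;\bar\beta}_s,\bar\beta^i_s)$. Well-posedness of the closed-loop SDE and $\bar\beta\in\CtrlStandard$ permit the identical localization argument with equalities throughout, and the terminal identity $w_\Vc(T,\cdot)=\Psi(\iota^{-1}(\cdot))$ delivers $w(t,\lambda)=J(t,\lambda;\bar\beta)$; comparing with part (i) forces $w=v$ and the optimality of $\bar\beta$.

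The main obstacle is the passage to the limit across the leaf structure $\sqcup_{\Vc\in\Igen}\R^{d|\Vc|}$: each jump of the $Q^i$'s transports the state between different $w_\Vc$'s, so the localization must be robust to this switching and must simultaneously control the population cardinality, the particle positions and the integrated control. The asymmetry in \eqref{eq:verification_thm:growth_w}---linear lower bound against quadratic upper bound---is precisely calibrated against the first- and second-moment estimates of \Cref{prop:existence_strong_branching} and \Cref{Lemma:bound_moment2_population}, so that $w$ always sits inside a dominating envelope for which Fatou and dominated convergence apply. Verifying this balance, rather than the It\^o step itself, is where the technical care is concentrated.
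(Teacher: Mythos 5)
Your proposal is correct and follows essentially the same route as the paper's proof: a piecewise It\^o expansion across branching intervals via $\iota$, localization by stopping times controlling the population size and positions, the HJB inequality to bound the drift by $-\sum_i\psi$, passage to the limit using the growth envelope \eqref{eq:verification_thm:growth_w} against the moment estimates, and the terminal condition; part (ii) is the same argument with equalities. The only cosmetic difference is that you dispose of controls with infinite quadratic cost by noting $J=+\infty$, whereas the paper restricts to the $\varepsilon$-optimal class $\CtrlStandard^{\eps}_{(t,\lambda)}$ of \Cref{prop:bound_eps_opt_control} to secure integrability --- the two reductions are equivalent.
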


The proof of the following is deferred to \Cref{Appendix:verification-thm}. This verification theorem has the advantage to prove not only the optimality of a solution but also showing some function is smaller than the value function. This description is the generalization of Theorem II.3.1 of \citet{Ustunel} for general value functions.
As its proof is a straightforward adaptation in our setting of Theorem 3.5.3 of \citet{pham2009continuous}, we provide it in \Cref{Appendix:verification-thm}.

\subsection{Viscosity solutions}
We now introduce the partial differential equation (PDE) associated with the control problem as follows \begin{align}
\label{eq:HJB}
    \begin{cases}
        -\partial_t w_\Vc\left(t,\Vec{x}_\Vc\right)
        - \inf_{\Vec{a}_\Vc\in A^{|\Vc|}}\left\{\Lb_\Vc w_\Vc\left(\Vec{x}_\Vc, \Vec{a}_\Vc\right) + \sum_{i\in\Vc} \psi\left(i,x_i, \iota^{-1}(\Vec{x}_\Vc),a_{\Vc,i}\right) \right\}
        =  0 \eqsp,
        \\
        w_\Vc\left(T,\Vec{x}_\Vc\right) = \Psi\l(\iota^{-1}(\Vec{x}_\Vc)\r)\eqsp,
    \end{cases}
\end{align}
for $\Vc\in\Igen$, $(t,\Vec{x}_\Vc)\in [0,T]\times \R^{d|\Vc|}$.
This corresponds to an infinite system of coupled HJB equations indexed by \(\Igen\). In \Cref{Thm:Verification_Theorem}, we demonstrated how an optimal control can be constructed from a sufficiently regular solution of this system. However, such regular solutions are not guaranteed to exist in general, particularly when the value function lacks smoothness.

In this context, \emph{viscosity solutions} provide a powerful tool to analyze the HJB equation. They allow us to define solutions in a weak sense. This approach, previously adopted in the context of branching processes by \citet{claisse18,kharroubi2024stochastic,kharroubi2024optimal}, employs viscosity solutions to rigorously connect the control problem to its PDE characterization, ensuring both existence and uniqueness via a comparison principle. Following these works, we adapt their methodology to our framework and provide the corresponding results.

The standard methodology in the stochastic control literature typically follows three main steps: (1) establish \emph{growth conditions} on the value function derived from the growth assumptions on the cost functions; (2) prove that the value function is a \emph{viscosity solution} to the associated HJB equation; and (3) demonstrate that this solution is unique within the class of functions satisfying the prescribed growth bounds, by establishing a \emph{comparison principle}.


\paragraph{Regularity and growth conditions.}
Under Assumptions~H\ref{hypH:model_parameters}--H\ref{hypH:coercivity_hyp}, the regularity and growth conditions imposed on the cost functions $\psi$ and $\Psi$ are naturally inherited by the value function $v$. This transfer of growth behavior follows from standard estimates on the controlled branching diffusion and the structure of the cost functional, and ensures that $v$ satisfies similar polynomial bounds, crucial for the well-posedness of \eqref{eq:HJB} and the comparison arguments.
The following result is a generalization, \eg, of Theorem II.10.1 and Theorem II.10.2 of \citet{FS}, to controlled processes solution to \eqref{SDE:strong}.

\begin{proposition}
\label{prop:continuity-value-fct}
    Assume that Assumptions~H\ref{hypH:model_parameters}--H\ref{hypH:coercivity_hyp} hold. Then, the value function $[0,T]\times E\ni (t,x)\mapsto v(t,x)$ is continuous in $[0,T]\times E$.
    Moreover, there exists a constant $C>0$, depending only on the time horizon $T$ and the coefficients $(b,\sigma,\gamma,(p_k)_{k\geq0}, \psi,\Psi)$, such that
    \begin{align}
    \label{eq:growth-condition:value-fct}
        -C\left(1+\langle1,\lambda\rangle+\langle|\cdot|,\lambda\rangle\right)\leq v(t,\lambda)\leq C\left(
            1+ \langle1,\lambda\rangle^2 + \langle|\cdot|^2,\lambda\rangle
        \right)
        \eqsp,\qquad\text{ for }(t,\lambda)\in[0,T]\times E
        \eqsp.
    \end{align}
\end{proposition}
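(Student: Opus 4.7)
The plan is to split the proof into (i) the two growth bounds and (ii) the continuity claim, with both parts following a standard strategy adapted to the present measure-valued setting.

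For the \emph{upper bound}, I would test $v(t,\lambda)$ against the constant control $\beta^{a_0,i}_s \equiv a_0$ for a fixed $a_0 \in A$. Applying the right-hand sides of~\eqref{eq:coercivity_hyp:big_Psi}-\eqref{eq:coercivity_hyp:psi} to $J(t,\lambda;\beta^{a_0})$ and invoking the moment estimates~\eqref{eq:non-explosion-moment1_mass}-\eqref{eq:non-explosion-moment2_mass} together with~\Cref{Lemma:bound_moment2_population} (which applies since $|\beta^{a_0,i}|^2\equiv|a_0|^2$ is trivially integrable) yields $v(t,\lambda) \leq J(t,\lambda;\beta^{a_0}) \leq C(1+\langle 1,\lambda\rangle^2 + \langle|\cdot|^2,\lambda\rangle)$, after absorbing $|a_0|^2$ into the generic constant $C$. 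For the \emph{lower bound}, I would mirror the argument of~\Cref{prop:bound_eps_opt_control}: for any $\beta \in \CtrlStandard$, the lower sides of~\eqref{eq:coercivity_hyp:big_Psi}-\eqref{eq:coercivity_hyp:psi} combined with~\eqref{eq:non-explosion-moment1_mass} and~\eqref{eq:non-explosion-moment1_population} give
\[
J(t,\lambda;\beta) \geq -C(1+\langle 1,\lambda\rangle+\langle|\cdot|,\lambda\rangle) - C\,\E\!\left[\int_t^T\sum_{i\in\Vc^{t,\lambda;\beta}_u}|\beta^i_u|\,du\right] + c_\psi\,\E\!\left[\int_t^T\sum_{i\in\Vc^{t,\lambda;\beta}_u}|\beta^i_u|^2\,du\right],
\]
and a Young-type inequality absorbs the linear $|\beta|$ term into the quadratic $|\beta|^2$ one, leaving a purely $\lambda$-dependent lower bound that transfers to $v$ upon taking the infimum in $\beta$.

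For the continuity, I exploit the Euclidean bijection $\iota$: since $\Ic$ carries the discrete topology, Wasserstein convergence $\lambda_n\to\lambda$ in $E$ forces $\iota(\lambda_n)$ and $\iota(\lambda)$ to share the same stratum $\R^{d|\Vc|}$ for $n$ large, so continuity of $v$ on $[0,T]\times E$ reduces to joint continuity of each $v_\Vc$ on $[0,T]\times\R^{d|\Vc|}$. Spatial continuity on a fixed stratum follows from pathwise stability of~\eqref{SDE:strong}: coupling the two processes with the same noise sources $(W^i,Q^i)$, the Lipschitz assumption~\eqref{eq:bound_b_sigma_Lipschitz} together with a Grönwall argument gives $L^2$-control of particle positions between branching events, while the branching mismatch probability vanishes with the perturbation thanks to H\ref{hypH:model_parameters}. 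Time continuity is then extracted from the DPP~\eqref{eq:DPP}: for $t<t'$,
\[
|v(t,\lambda)-v(t',\lambda)| \leq \sup_{\beta\in \CtrlStandard^{\eps}_{(t,\lambda)}}\E\!\left[\int_t^{t'}\sum_{i\in\Vc^{t,\lambda;\beta}_s}\left|\psi\left(i,Y^{i,\beta}_s,\xi^{t,\lambda;\beta}_s,\beta^i_s\right)\right|ds + \left|v\left(t',\xi^{t,\lambda;\beta}_{t'}\right)-v(t',\lambda)\right|\right],
\]
where the supremum is restricted to $\varepsilon$-optimal controls via~\Cref{prop:bound_eps_opt_control}; the first term vanishes as $t'\downarrow t$ by the coercivity bound on $\psi$ and the moment control, and the second by the spatial continuity just established combined with $\E[\d_{1,\Ic\times\R^d}(\xi^{t,\lambda;\beta}_{t'},\lambda)]\to 0$ as $t'\downarrow t$.

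The main obstacle will be continuity at the interfaces between different strata: a small spatial perturbation of $\lambda$ slightly shifts the branching intervals $I_k(i,x,\lambda,a)$ in~\eqref{SDE:strong}, so two coupled trajectories may select different offspring counts and transit to different $\R^{d|\Vc|}$. The standard remedy is to decompose the expectation into no-mismatch and first-mismatch events on short intervals: the former is handled by the Lipschitz estimate, the latter is controlled by the uniform bound $C_\gamma$ on the branching rate together with the uniform $L^2$-integrability of $\beta$ from~\Cref{prop:bound_eps_opt_control}, ensuring that the expected cost picked up on the mismatch event remains small uniformly in $n$.
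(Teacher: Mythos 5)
Your treatment of the two growth bounds is essentially the paper's: the upper bound via a constant control $\beta^{a_0}$ together with the right-hand sides of~\eqref{eq:coercivity_hyp:big_Psi}--\eqref{eq:coercivity_hyp:psi} and \Cref{Lemma:bound_moment2_population}, and the lower bound by absorbing the linear $|\beta|$ term into $c_\psi|\beta|^2$. No issue there.

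For the continuity, you take a genuinely different route (stratum-wise spatial continuity by coupling, then time continuity via the DPP, which is legitimately available at this point), whereas the paper proves upper and lower semicontinuity separately along a joint sequence $(t_n,\lambda_n)\to(t,\lambda)$. The gap is in your claim that ``spatial continuity on a fixed stratum follows from pathwise stability.'' Coupling the two systems with the same noise and the same control gives, via~\eqref{eq:bound_b_sigma_Lipschitz} and Gr\"onwall, an $L^2$ estimate on the state processes; for a \emph{fixed} control this yields one-sided semicontinuity of $v$ by dominated convergence (this is the paper's Step~1.1). But the other direction requires comparing $J(t,\lambda;\beta_n)$ and $J(t,\lambda_n;\beta_n)$ along controls $\beta_n$ that are near-optimal for the \emph{perturbed} data and hence vary with $n$. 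Since $\psi$ and $\Psi$ are only assumed continuous with coercive growth --- not Lipschitz or uniformly continuous in $x$ uniformly in $a$ --- the $L^2$ stability of the positions does not by itself give a modulus of continuity of $\beta\mapsto J(t,\cdot;\beta)$ that is uniform over $\CtrlStandard^{\eps}_{(t,\lambda_n)}$, so the two-sided estimate you implicitly use does not follow. You must either establish such a uniform modulus (which needs additional structure on $\psi,\Psi$) or argue as the paper does: use the uniform bound~\eqref{eq:bound_eps_opt_control} to get tightness of the near-optimal controls and trajectories, extract a convergent subsequence by Prokhorov, and conclude lower semicontinuity by Fatou. Your final paragraph on branching mismatches is the right concern for the state-process coupling, but it does not address this control-dependence issue, which is the actual obstruction.
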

\begin{proof}[Proof of \Cref{prop:continuity-value-fct}]

\emph{Step 1: Continuity of $v$.}
Let \( (t_n, \lambda_n) \to (t,\lambda) \) in \( [0,T] \times E\). We aim to prove that
\begin{align*}
\lim_{n \to \infty} v(t_n, \lambda_n) = v(t,\lambda).
\end{align*}
Since the index set \(\Ic\) is equipped with the discrete topology, convergence of the sequence \(\lambda_n = \sum_{i_n \in \Vc_n} \delta_{(i_n, x_{i_n,n})}\) to \(\lambda = \sum_{i \in \Vc} \delta_{(i, x_i)}\) in the vague topology implies that, for sufficiently large $n$, the supports \(\Vc_n\) must coincide with \(\Vc\). Indeed, the convergence of measures in the vague topology requires convergence of the locations of atoms and the preservation of their labels in the discrete topology.

Therefore, there exists $N \in \mathbb{N}$ such that for all $n \geq N$, we have \(\Vc_n = \Vc\). Without loss of generality, we assume that $N=0$. Moreover, for each \(i \in \Vc\), the continuity of the evaluation functionals implies that $x_{i,n} \to x_i$ in $\mathbb{R}^d$ as $n \to \infty$. 

\vspace{.5em}
\emph{Step 1.1: Upper semicontinuity.} Fix \( \varepsilon > 0 \). By definition of the infimum, there exists \( \beta \in \CtrlStandard \) such that
\begin{align*}
    v(t,x) + \varepsilon \geq J(t,x;\beta)
    \eqsp.
\end{align*}
Now consider the same control $\beta$ used from initial condition $(t_n, \lambda_n)$. Under Assumption H\ref{hypH:model_parameters}, we have that \eqref{eq:stability-Ys} hold.
Since $\psi$ and $\Psi$ are continuous and satisfy the polynomial growth \eqref{eq:coercivity_hyp:big_Psi}-\eqref{eq:coercivity_hyp:psi}, and \( \beta \) is fixed, we apply dominated convergence to get
\begin{align*}
    \limsup_{n \to \infty} J(t_n, \lambda_n; \beta) = J(t,x,\beta) \leq v(t,\lambda) + \varepsilon\eqsp.
\end{align*}
Thus,
\begin{align*}
    \limsup_{n \to \infty} v(t_n, \lambda_n) \leq v(t,\lambda) + \varepsilon\eqsp.
\end{align*}
Sending \( \varepsilon \to 0 \), we obtain
\begin{align*}
    \limsup_{n \to \infty} v(t_n, \lambda_n) \leq v(t,\lambda)\eqsp.
\end{align*}

\emph{Step 1.2: Lower semicontinuity.} Fix \( \varepsilon > 0 \). For each \( n \), choose \( \beta_n \in \CtrlStandard \) such that
\begin{align*}
    v(t_n, \lambda_n) + \varepsilon \leq J(t_n, \lambda_n; \beta_n)\eqsp.
\end{align*}
We focus now on each branch $i\in\Ic$. Under Assumption H\ref{hypH:model_parameters}, following classical SDE estimates, we have that
\begin{align*}
    \sup_n \mathbb{E}^\P \left[ \sup_{s \in [t_n, T]} \left|Y^{i,\beta_n}_s\right|^2\1_{i\in\Vc^{t_n,\lambda_n;\beta_n}_s} \right] < \infty
    \eqsp.
\end{align*}
Hence, the sequence $(Y^{i,\beta_n}_\cdot \eqsp \1_{i\in\Vc^{t_n,\lambda_n;\beta_n}_\cdot})_n$ is tight in the Skorokhod space $\D([0,T]; \mathbb{R}^d)$. By Prokhorov’s theorem \citep[see, \eg,][]{billingsley2013convergence}, tightness implies that there is a converging subsequence in law. Moreover, we also have that the controls $\beta^i_n\1_{i\in\Vc^{t_n,\lambda_n;\beta_n}_\cdot}$ are tight as a consequence of \eqref{eq:uniform-bound-beta}. 
Therefore, by the same arguments, we can you extract a converging subsequence $\beta_n \to \beta$ along with $Y^{i,\beta_n}_\cdot \eqsp \1_{i\in\Vc^{t_n,\lambda_n;\beta_n}_\cdot}\to Y^{i,\beta}_\cdot \eqsp \1_{i\in\Vc^{t,\lambda;\beta}_\cdot}$.

Therefore, by lower semicontinuity of the cost functional and Fatou’s lemma:
\begin{align*}
    \liminf_{n \to \infty} v(t_n, \lambda_n) + \varepsilon \geq \liminf_{n \to \infty} J(t_n, \lambda_n; \beta_n) \geq J(t,\lambda;\beta) \geq v(t,x).
\end{align*}
Thus,
\begin{align*}
    \liminf_{n \to \infty} v(t_n, \lambda_n) \geq v(t,\lambda) - \varepsilon\eqsp.
\end{align*}
Sending \( \varepsilon \to 0 \), we get
\begin{align*}
    \liminf_{n \to \infty} v(t_n, \lambda_n) \geq v(t,\lambda)\eqsp.
\end{align*}

Combining the two previous steps yields the continuity of $v$.

\vspace{.5em}
\emph{Step 2: Growth conditions.} This step is a consequence of \Cref{prop:bound_eps_opt_control}. First, we bound the l.h.s.\ of \eqref{eq:growth-condition:value-fct}. Using the l.h.s.\ of~\eqref{eq:coercivity_hyp:big_Psi} and~\eqref{eq:coercivity_hyp:psi} together with \Cref{prop:existence_strong_branching}, as in \eqref{eq:bound_a-C_a^2}, we see that there exists a constant $C>0$ (which may change from line to line) such that
\begin{align*}
        J(t,\lambda;\beta)
        \geq&\eqsp
        -C \E\left[1 + \sup_{u\in[t,T]}|V_u|^2 +\sup_{u\in[t,T]}\sum_{i\in \Vc^{t,\lambda;\beta}_{u}}\left|Y^{i,\beta}_{u}\right|\right] + c_\psi \E\left[ \int_t^{T} \sum_{i\in \Vc^{t,\lambda;\beta}_{u}}\left|\beta^{i}_{u}\right|^2 du \right]
        \\
        \geq&\eqsp
        -C\left(
            1+\langle1,\lambda\rangle^2+\langle|\cdot|,\lambda\rangle +
            \E\left[ \int_t^{T} \sum_{i\in \Vc^{t,\lambda;\beta}_{u}}\left|\beta^{i}_{u}\right| du\right]
        \right) 
        + c_\psi \E\left[
            \int_t^{T} \sum_{i\in \Vc^{t,\lambda;\beta}_{u}}\left|\beta^{i}_{u}\right|^2 du
        \right]
        \geq 
        -C\left(
            1+\langle1,\lambda\rangle^2+\langle|\cdot|,\lambda\rangle
        \right) 
        \eqsp,
\end{align*}
since the function $a \mapsto c_\psi|a|^2- C|a|$ is bounded from below.

Secondly, fix an arbitrary constant control $\beta^{a_0,i}_s :=a_0 \in A$, for $s\in[t,T]$, $i\in\Ic$. For $\varepsilon>0$, we have that
\begin{align*}
    v(t,\lambda)\leq J(t,\lambda;\beta)\leq J(t,\lambda;\beta^{a_0})+\eps
    \eqsp,\qquad\text{ for }\beta \in \CtrlStandard^{\eps}_{(t,\lambda)}
    \eqsp.
\end{align*}
Then, from the r.h.s.\ of~\eqref{eq:coercivity_hyp:big_Psi} and~\eqref{eq:coercivity_hyp:psi}, together with \Cref{prop:existence_strong_branching}, we get
\begin{align*}
    J(t,\lambda;\beta^{a_0})
    \leq&~ C\left(
        1 + \E^\P\left[
            \sup_{s\in[t,T]}\langle|\cdot|^2,\xi^{t,\lambda;\beta^{a_0}}_{s}\rangle +
            \sup_{s\in[t,T]}\langle1,\xi^{t,\lambda;\beta^{a_0}}_{s}\rangle^2 + |a_0|^2\eqsp\sup_{s\in[t,T]}\langle|\cdot|,\xi^{t,\lambda;\beta^{a_0}}_{s}\rangle
        \right]
    \right)
    \\
    \leq&~C\left(1+\langle1,\lambda\rangle^2+\langle|\cdot|^2,\lambda\rangle\right)
    \eqsp.
\end{align*}
\end{proof}

\paragraph{Viscosity solutions.} First, we define the notion of viscosity solution in our setting.

\begin{definition}[Viscosity solution]
\label{def:viscosity_solution}
    Let $v: [0,T] \times E \to \mathbb{R}$ be a continuous function, with $(v_\Vc)_{\Vc\in\Igen}$ its decomposition as in~\eqref{eq:def:v_Vc}.
    We say that
    \begin{itemize}
        \item $v$ is a viscosity subsolution of \eqref{eq:HJB} if
        \begin{enumerate}
            \item $v(T,\lambda) \leq \Psi(\lambda)$, for $\lambda\in E$;
            \item for all $(t_0,\lambda_0) \in [0,T) \times E$, with $\lambda_0= \sum_{i\in\Vc_0}\delta_{x^0_i}$, and all test function $(\varphi_\Vc)_{\Vc\in\Igen}$ such that $\varphi_\Vc\in C^{1,2}([0,T] \times \mathbb{R}^{d|\Vc|})$, $(t,\Vc,\vec{x}_\Vc)\mapsto v_\Vc(t,\vec{x}_\Vc) - \varphi_\Vc(t,\vec{x}_\Vc)$ attains a local maximum at $(t_0,\Vc_0,\iota(\lambda_0))$, it holds that
            \begin{align*}
                -\partial_t \varphi_{\Vc_0}\left(t,\iota(\lambda_0)\right)
                - \inf_{\Vec{a}_{\Vc_0}\in A^{|\Vc_0|}}\left\{\Lb_\Vc \varphi_{\Vc_0}\left(\iota(\lambda_0), \Vec{a}_{\Vc_0}\right) - \sum_{i\in \Vc_0} \psi\left(i,x^0_i, \lambda_0,a_{\Vc_0,i}\right) \right\}
                \leq  0
                \eqsp.
            \end{align*}
        \end{enumerate}
        \item $v$ is a viscosity supersolution of \eqref{eq:HJB} if
        \begin{enumerate}
            \item $v(T,\lambda) \geq \Psi(\lambda)$, for $\lambda\in E$;
            \item for all $(t_0,\lambda_0) \in [0,T) \times E$, with $\lambda_0= \sum_{i\in\Vc_0}\delta_{x^0_i}$, and all test function $(\varphi_\Vc)_{\Vc\in\Igen}$ such that $\varphi_\Vc\in C^{1,2}([0,T] \times \mathbb{R}^{d|\Vc|})$, $(t,\Vc,\vec{x}_\Vc)\mapsto v_\Vc(t,\vec{x}_\Vc) - \varphi_\Vc(t,\vec{x}_\Vc)$ attains a local minimum at $(t_0,\Vc_0,\iota(\lambda_0))$, it holds that
            \begin{align*}
                -\partial_t \varphi_{\Vc_0}\left(t,\iota(\lambda_0)\right)
                - \inf_{\Vec{a}_{\Vc_0}\in A^{|\Vc_0|}}\left\{\Lb_\Vc \varphi_{\Vc_0}\left(\iota(\lambda_0), \Vec{a}_{\Vc_0}\right) - \sum_{i\in \Vc_0} \psi\left(i,x^0_i, \lambda_0,a_{\Vc_0,i}\right) \right\}
                \geq  0
                \eqsp.
            \end{align*}
        \end{enumerate}
        \item $v$ is a viscosity solution of \eqref{eq:HJB} if it is both a viscosity subsolution and supersolution.
    \end{itemize}
\end{definition}

\begin{proposition}
\label{prop:viscosity_solution}
    Let $v$ be the value function defined in \eqref{eq:def:cost_function}. Suppose that Assumption H\ref{hypH:model_parameters}--H\ref{hypH:coercivity_hyp} hold.
    Then, $v$ is a viscosity solution of \eqref{eq:HJB}.
\end{proposition}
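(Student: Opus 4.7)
The plan is to verify the two halves of \Cref{def:viscosity_solution} separately, using the Dynamic Programming Principle (\Cref{Prop:DPP}) together with Itô--Dynkin calculus for the controlled branching diffusion applied to smooth test functions. Continuity of $v$, required a priori by the definition, is supplied by \Cref{prop:continuity-value-fct}. The terminal condition $v(T,\lambda) = \Psi(\lambda)$ is immediate from \eqref{eq:def:cost_function}, so both viscosity inequalities at $t = T$ reduce to equalities.

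For the subsolution property, fix $(t_0, \lambda_0) \in [0,T) \times E$ with $\lambda_0 = \sum_{i \in \Vc_0} \delta_{(i, x^0_i)}$ and a test family $(\varphi_\Vc)_\Vc$ for which $v_\Vc - \varphi_\Vc$ attains a local maximum at $(t_0, \Vc_0, \iota(\lambda_0))$. After the standard reduction, I assume the maximum is strict with $v_{\Vc_0} = \varphi_{\Vc_0}$ at the touching point and $v_{\Vc_0} \leq \varphi_{\Vc_0}$ on a neighborhood $B_\delta$. Arguing by contradiction, suppose there exists $\eta > 0$ such that, uniformly in $\vec{a} \in A^{|\Vc_0|}$ and $(t, \vec{x}) \in B_\delta$,
\[
\partial_t \varphi_{\Vc_0}(t, \vec{x}) + \Lb_{\Vc_0} \varphi_{\Vc_0}(\vec{x}, \vec{a}) + \sum_{i \in \Vc_0} \psi(i, x_i, \iota^{-1}(\vec{x}), a_i) \leq -\eta.
\]
Pick any $\vec{a}^*$ and apply \Cref{Prop:DPP} with the constant admissible control $\beta \equiv \vec{a}^*$ and stopping time $\theta_h := (t_0 + h) \wedge \tau_1 \wedge \tau_{\mathrm{exit}}$, where $\tau_1$ is the first branching time and $\tau_{\mathrm{exit}}$ is the first exit of $(s, \iota(\xi_s))$ from $B_\delta$. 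Since the configuration stays $\Vc_0$ on $[t_0, \tau_1)$, Itô's formula applied to $\varphi_{\Vc_0}(s, \iota(\xi_s))$ up to $\theta_h$ yields
\[
\E[\varphi_{\Vc_0}(\theta_h, \iota(\xi_{\theta_h^-}))] = \varphi_{\Vc_0}(t_0, \iota(\lambda_0)) + \E\left[\int_{t_0}^{\theta_h} \{\partial_t \varphi_{\Vc_0} + L^{\mathrm{diff}}_{\Vc_0} \varphi_{\Vc_0}(\vec{a}^*)\} ds\right],
\]
with $L^{\mathrm{diff}}_{\Vc_0}$ the drift-and-diffusion part of $\Lb_{\Vc_0}$. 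Combining this with the DPP inequality $\varphi_{\Vc_0}(t_0, \iota(\lambda_0)) \leq \E[\int_{t_0}^{\theta_h} \sum_i \psi\, ds + v(\theta_h, \xi_{\theta_h})]$, using $v \leq \varphi$ on $\{\theta_h < \tau_1\}$, and identifying the contribution of $v(\tau_1, \xi_{\tau_1}) \1_{\theta_h = \tau_1}$ via the Poisson jump intensity $\gamma p_k$ with the branching term $\sum_i \gamma \sum_k v_{\Vc^i_k}(\mathfrak{e}^i_{\Vc_0, k}(\cdot)) p_k$ of $\Lb_{\Vc_0}$ yields $\eta\, \E[\theta_h - t_0] \leq 0$, contradicting $h > 0$.

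The supersolution property is proven symmetrically. Now $v_{\Vc_0} \geq \varphi_{\Vc_0}$ locally, and one assumes by contradiction $\sup_{\vec{a}} \{\partial_t \varphi_{\Vc_0} + \Lb_{\Vc_0} \varphi_{\Vc_0}(\vec{a}) + \sum_i \psi(\vec{a})\} \leq -\eta$ in $B_\delta$. Select an $\varepsilon_h$-optimal control $\beta^{\varepsilon_h} \in \CtrlStandard$ with $\varepsilon_h = o(h)$, invoke the DPP to get
\[
v(t_0, \lambda_0) + \varepsilon_h \geq \E\left[\int_{t_0}^{\theta_h} \sum_i \psi(\beta^{\varepsilon_h}_s) ds + v(\theta_h, \xi_{\theta_h})\right],
\]
and repeat the Itô--jump-compensation argument, now with the path-dependent control $\beta^{\varepsilon_h}$; since the contradiction bound holds pointwise in every $\vec{a}$, it also holds along the trajectory $\beta^{\varepsilon_h}_s$, and one derives the reverse contradiction.

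The principal technical obstacle is bridging the Itô expansion (which between jumps produces only $L^{\mathrm{diff}}_{\Vc_0}$) with the full generator $\Lb_{\Vc_0}$ containing the branching compensator $\sum_i \gamma \{\sum_k v_{\Vc^i_k}(\mathfrak{e}^i_{\Vc_0, k}(\vec{x})) p_k - v_{\Vc_0}(\vec{x})\}$. The resolution is to decompose $\E[v(\theta_h, \xi_{\theta_h})]$ along $\{\theta_h < \tau_1\}$ (where $v \leq \varphi_{\Vc_0}$ gives the diffusion estimate) and $\{\theta_h = \tau_1\}$ (where the post-jump configuration is $\Vc^i_k$ at $\mathfrak{e}^i_{\Vc_0, k}(\iota(\xi_{\tau_1^-}))$, and conditioning on $\Fc_{\tau_1^-}$ with the Poisson intensity reconstructs the integrated branching term of $\Lb_{\Vc_0}$ involving the values $v_{\Vc^i_k}$). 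A secondary subtlety is the uniformity in $\vec{a}$ of the contradiction inequality, which follows from continuity of the coefficients and the coercivity $\psi(\cdot, a) \geq c_\psi |a|^2 - C(1 + |x|)$ in \eqref{eq:coercivity_hyp:psi}, effectively restricting the infimum over $A^{|\Vc_0|}$ to a bounded set.
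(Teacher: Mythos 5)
The paper does not actually spell out a proof of this proposition: it explicitly omits it, stating only that one adapts the classical DPP-plus-It\^o argument (Chapter 4.3 of \citet{pham2009continuous}, Theorem II.5.1 of \citet{FS}) to the semimartingale decomposition \eqref{SDE:strong}, using the continuity and growth bounds of \Cref{prop:continuity-value-fct}. Your proposal fleshes out exactly that route --- both directions of \Cref{Prop:DPP}, constant controls for one inequality and $\varepsilon_h$-optimal controls for the other, It\^o's formula between branching events, and reconstruction of the nonlocal branching term from the Poisson compensator on the event $\{\theta_h=\tau_1\}$ --- so the approach matches what the paper intends, and your identification of the two real technical points (the jump compensation producing the $v_{\Vc^i_k}$ terms, and the coercivity of $\psi$ in \eqref{eq:coercivity_hyp:psi} confining the relevant actions to a bounded set) is on target.

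There is, however, a concrete sign error that breaks the supersolution half as written. The supersolution inequality to be established is $-\partial_t\varphi_{\Vc_0}-\inf_{\vec a}\{\Lb_{\Vc_0}\varphi_{\Vc_0}(\vec a)+\sum_i\psi(a_i)\}\geq 0$; its negation is $\partial_t\varphi_{\Vc_0}+\Lb_{\Vc_0}\varphi_{\Vc_0}(\vec a)+\sum_i\psi(a_i)\geq \eta>0$ for \emph{every} $\vec a$ on a neighborhood, i.e.\ a \emph{positive lower} bound on the infimum. You instead assume $\sup_{\vec a}\{\cdots\}\leq-\eta$. With that hypothesis the chain $v(t_0,\lambda_0)+\varepsilon_h\geq \E[\int\psi+v(\theta_h,\cdot)]\geq \E[\int\psi+\varphi(\theta_h,\cdot)]=\varphi_{\Vc_0}(t_0,\iota(\lambda_0))+\E[\int(\partial_t\varphi+\Lb\varphi+\psi)]$ only yields $\varepsilon_h\geq(\text{a nonpositive quantity})$, which is trivially true and gives no contradiction. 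The argument closes only with the correctly signed hypothesis, which gives $\varepsilon_h\geq\eta\,\E[\theta_h-t_0]\gtrsim \eta h$ and contradicts $\varepsilon_h=o(h)$. A smaller quibble in the subsolution half: you state the contradiction hypothesis ``uniformly in $\vec a$,'' whereas the negation of the subsolution inequality only furnishes a single $\vec a^*$; this is harmless since you then use only one $\vec a^*$, and your computation in fact directly proves the inequality for each fixed action, which is exactly what the subsolution property requires. Finally, note that you write the reconstructed branching term with $v_{\Vc^i_k}$ rather than $\varphi_{\Vc^i_k}$; although \Cref{def:viscosity_solution} literally writes $\Lb_\Vc\varphi_{\Vc_0}$, your convention is the one consistent with the disjoint-union topology (a local extremum constrains only the component $\Vc_0$) and with how the Hamiltonian is evaluated in the proof of \Cref{prop:comparison_principle}, so this choice is sensible and should be made explicit.
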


Once the continuity of the value function is established in \Cref{prop:continuity-value-fct}, it becomes straightforward to adapt the verification arguments of \Cref{Thm:Verification_Theorem} to the viscosity solution framework. In particular, one can follow the classical approach used in viscosity theory for stochastic control, adapting the proof techniques outlined in Chapter 4.3 of \citet{pham2009continuous} or Theorem II.5.1 in \citet{FS}.
The approach relies on the DPP \eqref{Prop:DPP} and derive the infinitesimal properties of the test functions from the semimartingale decomposition \eqref{SDE:strong}, together with the continuity and growth properties previously established. For these reasons, we omit the detailed proof.


Using the growth condition in \eqref{eq:growth-condition:value-fct} together with the continuity of the value function, we can establish the following comparison principle.

\begin{proposition}[Comparison principle]
\label{prop:comparison_principle}
    Let $w$ (resp. $u$) be a l.s.c. (resp. u.s.c.) viscosity supersolution (resp. subsolution) to \eqref{eq:HJB} satisfying the growth condition \eqref{eq:growth-condition:value-fct}. Then, we have that
    \begin{align*}
        u(t,\lambda) \leq w(t,\lambda)\eqsp,
        \qquad \text{ for } (t,\lambda)\in [0,T]\times E\eqsp.
    \end{align*}
    In particular, if \(v\) is the value function defined in \eqref{eq:def:cost_function}, then \(v\) is the unique viscosity solution of the HJB equation \eqref{eq:HJB}.
\end{proposition}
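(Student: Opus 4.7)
The strategy is a variant of the classical Crandall--Ishii--Lions doubling-variables scheme, adapted to the coupled infinite system \eqref{eq:HJB}. The novelty compared with the scalar case is that the operator $\Lb_\Vc$ links the equation indexed by $\Vc$ to those indexed by $\Vc^i_k$ through the branching sum. I would argue by contradiction, penalize in both the Euclidean variable and the configuration size to localize the would-be supremum at a single configuration, apply Ishii's lemma in the resulting finite-dimensional fiber, and exploit the global maximality together with a monotonicity property of the penalty to force the branching coupling term to be nonpositive.

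Suppose $M := \sup_{\Vc, t, \Vec{x}_\Vc}(u_\Vc - w_\Vc)(t, \Vec{x}_\Vc) > 0$. The first step is to design a smooth Lyapunov-type function $\phi_\Vc(\Vec{x}_\Vc)$ with two features: (i) it majorizes the quadratic growth in \eqref{eq:growth-condition:value-fct} of $u-w$ in both $\|\Vec{x}_\Vc\|$ and $|\Vc|$; and (ii) for $\mu$ large enough, $\rme^{\mu t}\phi_\Vc$ is a smooth strict supersolution of the HJB operator in \eqref{eq:HJB}, uniformly in $\Vec{a}_\Vc\in A^{|\Vc|}$. The verification of (ii) hinges on the moment bounds \eqref{eq:bound:order1_2_Phi} to tame the jump terms $\phi_{\Vc^i_k}(\mathfrak{e}^i_{\Vc,k}(\Vec{x}_\Vc))$. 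Replacing $u$ by $u-\eta\rme^{\mu t}\phi$ and $w$ by $w+\eta\rme^{\mu t}\phi$ with $\eta>0$ small then yields strict sub/supersolutions whose difference tends to $-\infty$ uniformly as $|\Vc|+\|\Vec{x}_\Vc\|\to\infty$; in particular, the positive supremum is attained at some $(\Vc^\star, t^\star, \Vec{x}^\star)$ with $t^\star<T$ by the terminal ordering.

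Fixing the finite configuration $\Vc^\star$, I would then double the variables on $[0,T]\times\R^{d|\Vc^\star|}$ via the quadratic penalty $(\|\Vec{x}-\Vec{y}\|^2+|t-s|^2)/(2\eps)$ and invoke the Crandall--Ishii lemma. Subtracting the resulting sub- and supersolution inequalities, the drift and diffusion contributions are absorbed using the Lipschitz bound \eqref{eq:bound_b_sigma_Lipschitz} in the standard way, while the branching contribution reduces, after passing to $\eps\downarrow 0$, to
\begin{align*}
    \sum_{i\in\Vc^\star}\gamma(\cdots)\sum_{k\geq 0}p_k(\cdots)\Bigl[(\tilde u_{(\Vc^\star)^i_k}-\tilde w_{(\Vc^\star)^i_k})(t^\star,\mathfrak{e}^i_{\Vc^\star,k}(\Vec{x}^\star)) - (\tilde u_{\Vc^\star}-\tilde w_{\Vc^\star})(t^\star,\Vec{x}^\star)\Bigr] \leq 0,
\end{align*}
the inequality being a direct consequence of the global maximality of $(\Vc^\star,t^\star,\Vec{x}^\star)$ for $\tilde u - \tilde w$. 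The strict supersolution property of $\phi$ leaves a strictly positive residual proportional to $\eta\rme^{\mu t^\star}\phi_{\Vc^\star}(\Vec{x}^\star)$, yielding the desired contradiction. Letting $\eta\downarrow 0$ gives $u \leq w$, and the uniqueness of the viscosity solution then follows by applying this to the value function $v$, which is known to be a viscosity solution by \Cref{prop:viscosity_solution}.

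The main obstacle is the construction of $\phi_\Vc$ in the first step: it must simultaneously dominate the quadratic growth of $u-w$, have its jumps $\phi_{\Vc^i_k}(\mathfrak{e}^i_{\Vc,k}(\Vec{x}_\Vc))-\phi_\Vc(\Vec{x}_\Vc)$ summable against $p_k$ (which ties the admissible growth in $|\Vc|$ to the second factorial moment $C^2_\Phi$), and be compatible with the linear growth of $b$ in $(x,a)$, which couples its gradient to the $a$-dependence of the Hamiltonian. Most importantly, the design must ensure the configuration-wise monotonicity that renders the branching sum nonpositive at the maximizer. Once such a $\phi$ is produced (for instance as a carefully tuned sum of polynomial weights in $\|\Vec{x}_\Vc\|$ and $|\Vc|$), the remainder of the proof is a routine adaptation of the classical viscosity comparison scheme.
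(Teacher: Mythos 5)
Your overall architecture (argue by contradiction, penalize to localize the supremum at a single configuration, double variables in the finite-dimensional fiber, apply Ishii's lemma, and use global maximality to control the branching coupling) is the same skeleton as the paper's proof; the paper generates the contradiction via the rescaling $\tilde u = \rme^{\kappa t}u$, $\tilde w = \rme^{\kappa t}w$ with $\kappa$ large rather than via a strict supersolution perturbation $\eta\,\rme^{\mu t}\phi$, but that difference is cosmetic. There are, however, two genuine gaps.

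First, your localization does not actually secure the existence of the maximizer $(\Vc^\star,t^\star,\Vec{x}^\star)$. You penalize only in $\|\Vec{x}_\Vc\|$ and $|\Vc|$, but the set $\{\Vc\in\Igen : |\Vc|\leq K\}$ is \emph{infinite}: the singletons $\{0\},\{00\},\{000\},\dots$ are all admissible configurations of cardinality one, and since the coefficients depend on the label $i\in\Ic$ while the growth bound \eqref{eq:growth-condition:value-fct} is uniform over configurations, the supremum over this infinite family need not be attained. This is precisely why the paper adds the extra penalty $\epsilon\,\Lambda(\Vc)$ with $\Lambda(\Vc):=\max_{i\in\Vc}|i|$, which together with the penalty on $|\Vc|$ reduces the competition to a \emph{finite} set of configurations, from which a constant subsequence $\Vc_\star$ can be extracted. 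Without a penalty on the label depth, the step "the positive supremum is attained at some $(\Vc^\star,t^\star,\Vec{x}^\star)$" fails.

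Second, the entire contradiction rests on the Lyapunov function $\phi_\Vc$ being a smooth, uniform-in-$a$ strict supersolution whose branching increments $\phi_{\Vc^i_k}(\mathfrak{e}^i_{\Vc,k}(\Vec{x}_\Vc))-\phi_\Vc(\Vec{x}_\Vc)$ are summable against $(p_k)_k$ and dominated by the strict margin — and you never construct it, explicitly deferring it as "the main obstacle". This is not a routine omission: the required estimate is exactly the content of the paper's bounds on $\Delta^{1,a},\dots,\Delta^{1,d}$, where the cubic penalties in $\|\Vec{x}_\Vc\|$ and $|\Vc|$ are propagated through the maps $\mathfrak{e}^i_{\Vc,k}$ and controlled via $C^1_\Phi$, $C^2_\Phi$ and the mean value theorem, and where the compatibility with the linear growth of $b$ in $a$ against the coercive term $c_\psi|a|^2$ in $\psi$ must be checked inside the infimum defining $\Hb_\Vc$. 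As written, the proof is incomplete precisely at the point where the branching structure makes the problem non-classical; supplying the construction of $\phi$ (and the label-depth penalty above) would essentially reproduce the computations of the paper's proof.
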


\begin{proof}
    In this proof, we omit the subscript $\|\cdot\|_{2,n}$ and only keep $\|\cdot\|$ for the Euclidean norm on $\R^n$, since the appropriate dimension of the vector under consideration can be inferred from the context.

    Fix $\kappa>0$. Then, a straightforward derivation shows that $\tilde{w}(t,\lambda) = \rme^{\kappa t} w(t,\lambda)$ (resp. $\tilde{u}(t,\lambda) = \rme^{\kappa t} u(t,\lambda)$), for $(t,\lambda) \in[0,T]\times E$, satisfies the growth condition \eqref{eq:growth-condition:value-fct} and is a l.s.c. (resp. u.s.c.) viscosity supersolution (resp. subsolution) to the following HJB equation
    \begin{align}
    \label{eq:comparison:new-HJB}
        \begin{cases}
            \kappa \varphi_\Vc\left(t,\Vec{x}_\Vc\right) -\partial_t \varphi_\Vc\left(t,\Vec{x}_\Vc\right)
            - \inf_{\Vec{a}_\Vc\in A^{|\Vc|}}\left\{\Lb_\Vc \varphi_\Vc\left(\Vec{x}_\Vc, \Vec{a}_\Vc\right) - \sum_{i\in\Vc} \rme^{\kappa t} \psi\left(i,x_i, \iota^{-1}(\Vec{x}_\Vc),a_{\Vc,i}\right) \right\}
            =  0 \eqsp,
            \\
            \varphi_\Vc\left(T,\Vec{x}_\Vc\right) = \rme^{\kappa T} \Psi\l(\iota^{-1}(\Vec{x}_\Vc)\r)\eqsp.
        \end{cases}
    \end{align}

    We assume to the contrary that there exists there exists $\Vc_0\in\Igen$ and $(t_0,\vec{x}^0_{\Vc_0})\in [0,T]\times \R^{d|\Vc_0|}$ such that ${u}_{\Vc_0}(t_0,\vec{x}^0_{\Vc_0})-{w}_{\Vc_0}(t_0,\vec{x}^0_{\Vc_0}) \geq  \delta$, for some $\delta>0$. This implies that
    \begin{align}
    \label{eq:comparison:absurdHP}
        \tilde{u}_{\Vc_0}(t_0,\vec{x}^0_{\Vc_0})-\tilde{w}_{\Vc_0}(t_0,\vec{x}^0_{\Vc_0}) 
        \geq 
        \rme^{t_0\kappa}\delta
        \geq 
        \delta
        \eqsp.
    \end{align}
    
    Consider the function $\Lambda:\Igen \to \mathbb{N}$ that assigns to each $\Vc \in \Igen$ the index of largest norm, \ie, $\Lambda(\Vc):=\max_{i\in\Vc}|i|$.
    Since $w$ and $u$ satisfy \eqref{eq:growth-condition:value-fct}, the same holds for $\tilde{w}$ and $\tilde{u}$.

    Fix $\epsilon > 0$. It then follows that, for any $\Vc \in \Igen$ and $(t,\vec{x}\Vc) \in [0,T] \times \mathbb{R}^{d|\Vc|}$, the function
    \[
    \tilde{w}_\Vc(t,\vec{x}_\Vc) + \tilde{u}_\Vc(t,\vec{x}_\Vc) + \epsilon (\|\vec{x}_\Vc\|^3 + |\Vc|^3)
    \]
    is uniformly bounded. In particular, this boundedness holds uniformly with respect to the index \(\Lambda(\Vc)\).
    For $n\geq0$, define the following penalized function
    \begin{align*}
        \phi_{\epsilon,n}(\Vc, t,s,\vec{x}_\Vc,\vec{y}_\Vc) 
        :=
        \tilde{u}_{\Vc}(t,\vec{x}_{\Vc}) - \tilde{w}_{\Vc}(s,\vec{y}_{\Vc})
        -\frac{n}{2}|t-s|^2
        -\frac{n}{2}\|\vec{x}_{\Vc}-\vec{y}_{\Vc}\|^2-\epsilon\l(\|\vec{x}_{\Vc}\|^3+\|\vec{y}_{\Vc}\|^3 + |\Vc|^3 + \Lambda(\Vc)\r)
        \eqsp.
    \end{align*}
    for some small $\epsilon>0$. Define $M_{\epsilon,n}$ as
    \begin{align*}
        M_{\epsilon,n} := &~
        \sup_{\Vc\in\Igen,\eqsp (t,s,\vec{x}_\Vc,\vec{y}_\Vc)\in [0,T]^2\times (\R^{d|\Vc|})^2}
        \phi_{\epsilon,n}(\Vc, t,s,\vec{x}_\Vc,\vec{y}_\Vc)\eqsp.
    \end{align*}
    Since $u$ and $w$ satisfy the growth condition \eqref{eq:growth-condition:value-fct}, there exists $\Vc_n\in\Igen$ and $(t_n,s_n,\vec{x}^n_{\Vc_n},\vec{y}^n_{\Vc_n})$ such that 
    \begin{align*}
        M_{\epsilon,n} =  \phi_{\epsilon,n}(\Vc_n, t_n,s_n,\vec{x}^n_{\Vc_n},\vec{y}^n_{\Vc_n}) 
        \eqsp.
    \end{align*}
    Take $\epsilon$ small enough such that
    \begin{align*}
        \delta_\epsilon := \delta-\epsilon\l(2\|\vec{x}^0_{\Vc_0}\|^3 + |\Vc_0|^3 + \Lambda(\Vc_0)\r) > 0
        \eqsp.
    \end{align*}
    Moreover, combining the growth condition \eqref{eq:growth-condition:value-fct} with \eqref{eq:comparison:absurdHP}, there exists $\Vc_\infty\in\Igen$ and $(t_\infty,\vec{x}^\infty_{\Vc_\infty})\in [0,T]\times \R^{d|\Vc_\infty|}$ such that
    \begin{align}
    \label{eq:comparison:maximization_pt}
        \begin{split}
            M_{\epsilon,\infty}
            :=&~
            \sup_{
                \Vc\in\Igen,\eqsp(t,\vec{x}_{\Vc})\in [0,T]\times \R^{d|\Vc|}
            }\left(
                \tilde{u}_{\Vc}(t,\vec{x}_{\Vc})-\tilde{w}_{\Vc}(t,\vec{x}_{\Vc})
                -\epsilon\l(2\|\vec{x}_{\Vc}\|^3 + |\Vc|^3 + \Lambda(\Vc)\r)
            \right)
            \\
            =&~
            \tilde{u}_{\Vc_\infty}(t_\infty,\vec{x}^\infty_{\Vc_\infty})-\tilde{w}_{\Vc_\infty}(t_\infty,\vec{x}^\infty_{\Vc_\infty})
            -\epsilon\l(2\|\vec{x}^\infty_{\Vc_\infty}\|^3 + |\Vc_\infty|^3 + \Lambda(\Vc_\infty)\r)
            \geq \delta_\epsilon > 0\eqsp.
        \end{split}
    \end{align}

    From the definition of $M_{\epsilon,n}$, taking $x=y$ in the previous supremum, we obtain that
    \begin{align}
    \label{eq:comparison:sup_geq_0}
        0<~\delta_\epsilon \leq \bar M_{n}\leq~ C_\epsilon\eqsp,
    \end{align}
    for a constant $C_\epsilon>0$ that depends on $\epsilon$ and the growth condition \eqref{eq:growth-condition:value-fct}. 
    
    As we penalize w.r.t.\ the size of $\Vc$ and element of maximum size $\Lambda(\Vc)$, we must have that there exists $K\in\N$ such that $\max\{|\Vc_n|;\Lambda(\Vc_n)\}\leq K$, for $n\geq1$. This criterion selects a finite number of possible sets of indices, \ie, the set
    \begin{align*}
        \l\{
            \Vc\in\Igen\eqsp:\eqsp|\Vc|\leq K,\eqsp \Lambda(\Vc)\leq K
        \r\}
    \end{align*}
    is finite. Therefore, without loss of generality, the sequence $\{\Vc_n\}_{n\geq1}$ is convergent, up to a subsequence that is constantly equal to an element $\Vc_\star$.

    Using the growth condition \eqref{eq:growth-condition:value-fct}, there exists a compact set $B_\epsilon\subset\R^{d|\Vc_\star|}$ such that $\vec{x}^n_{\Vc_\star},\vec{y}^n_{\Vc_\star}\in B_\epsilon$, for $n\geq1$. Therefore, up to  a sub-sequence, we can take $(\vec{x}^n_{\Vc_\star},\vec{y}^n_{\Vc_\star})\to(\vec{x}^\star_{\Vc_\star},\vec{y}^\star_{\Vc_\star})$ and $(t_n,s_n)\to(t_\star,s_\star)$, as $n\rightarrow\infty$.
    Moreover, there exists a constant $C^\prime_\epsilon>0$ such that
    \begin{align}
    \label{eq:comparison:bound_x_y_n}
        \frac{n}{2}\|\vec{x}^n_{\Vc_\star}-\vec{y}^n_{\Vc_\star}\|^2+ \frac{n}{2}|t_n-s_n|^2 \leq  C^\prime_\epsilon \eqsp,
        \qquad\text{ for }n\geq 1\eqsp.
    \end{align}
    This yields that $\lim_{n\to\infty}\|\vec{x}^n_{\Vc_\star}-\vec{y}^n_{\Vc_\star}\|=0$ (resp. $\lim_{n\to\infty}|t_n-s_n|=0$) and $\vec{x}^\star_{\Vc_\star}=\vec{y}^\star_{\Vc_\star}$ (resp. $t_\star=s_\star$). Combining this with \eqref{eq:comparison:sup_geq_0}, we obtain 
    \begin{align*}
        \lim_{n\to\infty}\frac{n}{2}\|\vec{x}^n_{\Vc_\star}-\vec{y}^n_{\Vc_\star}\|^2 =  0
        \qquad\text{ and }
        \qquad
        \lim_{n\to\infty}\frac{n}{2}|t_n-s_n|^2 =  0
        \eqsp.
    \end{align*}

    Without loss of generality, we can take the maximization point in \eqref{eq:comparison:maximization_pt} to be $\Vc_\star\in\Igen$ and $(t_\star,\vec{x}^\star_{\Vc_\star})\in [0,T]\times \R^{d|\Vc_\star|}$, $i.e.$, $(\Vc_\infty,t_\infty,\vec{x}^\infty_{\Vc_\infty})=(\Vc_\star,t_\star,\vec{x}^\star_{\Vc_\star})$.
    
    Therefore, as $(t_n,s_n,\vec{x}^n_{\Vc_\star},\vec{y}^n_{\Vc_\star})\in [0,T]^2\times (\R^{d|\Vc_\star|})^2$ is a maximizer of $M_{\epsilon, n}$, we may apply Ishii's lemma \citep[see, $e.g.$, Theorem 8.3,][]{crandall1992users} since we consider a system of PDE in finite dimension as we took $\Vc_n$ to be constant, for $n\geq1$. Therefore, there exist $A^u_n,A^w_n\in\S^{d|\Vc_\star|}$ such that
    \begin{align}
    \label{eq:comparison:eq}
        \begin{split}
            \kappa\eqsp\tilde{u}_{\Vc_\star}(t_n,\vec{x}^n_{\Vc_\star})-n(t_n-s_n) - \Hb_{\Vc_\star}\left(
                \Vec{x}^n_{\Vc_\star},\eqsp
                \tilde{u}_{\Vc_\star}(t_n,\vec{x}^n_{\Vc_\star}),\eqsp
                q^u_n,\eqsp
                A^u_n,\eqsp 
                \l(\tilde{u}_{\Vc_{\star,k}^i}\big(t_n,\mathfrak{e}_{\Vc_\star,k}^i\l(\vec{x}^n_{\Vc_\star}\r)\big)\r)_{i\in\Vc_\star,\ell\in\N}
            \right)\leq&~0
            \eqsp,
            \\
            \kappa\eqsp\tilde{w}_{\Vc_\star}(s_n,\vec{y}^n_{\Vc_\star})-n(t_n-s_n) - \Hb_{\Vc_\star}\left(
                \Vec{y}^n_{\Vc_\star},\eqsp 
                \tilde{w}_{\Vc_\star}(s_n,\vec{y}^n_{\Vc_\star}),\eqsp
                q^w_n,\eqsp
                A^w_n,\eqsp 
                \l(
                    \tilde{w}_{\Vc_{\star,k}^i}\big(s_n,\mathfrak{e}_{\Vc_\star,k}^i\l(\vec{y}^n_{\Vc_\star}\r)\big)
                \r)_{i\in\Vc_\star,\ell\in\N}
            \right)\geq&~0
            \eqsp,
        \end{split}
    \end{align}
    with
    \begin{align*}
        q^u_n
        :=
        n(\Vec{x}^n_{\Vc_\star}-\Vec{y}^n_{\Vc_\star}) + 3\epsilon \eqsp\|\Vec{x}^n_{\Vc_\star}\|\eqsp \Vec{x}^n_{\Vc_\star}
        \eqsp,
        \qquad
        q^w_n
        :=
        n(\Vec{x}^n_{\Vc_\star}-\Vec{y}^n_{\Vc_\star}) - 3\epsilon \eqsp\|\Vec{y}^n_{\Vc_\star}\|\eqsp \Vec{y}^n_{\Vc_\star}
        \eqsp,
    \end{align*}
    and
    \begin{align}
    \label{eq:comparison:matrix-ineq}
        -(n+|\mathfrak{D}_n|) ~
        \I_{2d|\Vc_\star|} \leq&~
        \begin{pmatrix}
            A^u_n & 0\\0 & - A^w_n
        \end{pmatrix} \leq \mathfrak{D}_n+\frac{1}{n}\mathfrak{D}_n^2
        \eqsp,
    \end{align}
    where
    \begin{align*}
        \mathfrak{D}_n :=&~ n \begin{pmatrix}
            \I_{d|\Vc_\star|} & -\I_{d|\Vc_\star|}\\-\I_{d|\Vc_\star|} & \I_{d|\Vc_\star|}
        \end{pmatrix} + 3\epsilon\begin{pmatrix}
            \eta_{\Vc_\star}(\Vec{x}^n_{\Vc_\star}) & -(\eta_{\Vc_\star}(\Vec{x}^n_{\Vc_\star})+ \eta_{\Vc_\star}(\Vec{y}^n_{\Vc_\star}))\\-(\eta_{\Vc_\star}(\Vec{x}^n_{\Vc_\star})+ \eta_{\Vc_\star}(\Vec{y}^n_{\Vc_\star})) & \eta_{\Vc_\star}(\Vec{y}^n_{\Vc_\star})
        \end{pmatrix}
    \end{align*}
    and
    \begin{align*}
        \eta_{\Vc}(\Vec{x}_{\Vc}) :=&~ \|\Vec{x}_{\Vc}\|\eqsp \I_{d|\Vc|} +\frac{\Vec{x}_{\Vc}\Vec{x}_{\Vc}^\top }{\|\Vec{x}_{\Vc}\|}\eqsp,\qquad\text{ for }\Vc\in\Igen,\eqsp \Vec{x}_{\Vc}\in\R^{d|\Vc|}
        \eqsp.
    \end{align*}
    Using the fact that $\eta_{\Vc}(\Vec{x}_{\Vc})\Vec{x}_{\Vc} = 2 \|\Vec{x}_{\Vc}\|\eqsp\Vec{x}_{\Vc}$, 
    the r.h.s.\ of
    \eqref{eq:comparison:matrix-ineq} reduces to
    \begin{align*}
        &\begin{pmatrix}
            A^u_n & 0\\0 & - A^w_n
        \end{pmatrix} 
        \\
        &\leq 
        3n\begin{pmatrix}
            \I_{d|\Vc_\star|} & -\I_{d|\Vc_\star|}\\-\I_{d|\Vc_\star|} & \I_{d|\Vc_\star|}
        \end{pmatrix}
        +6\epsilon
        \begin{pmatrix}
            3\|\Vec{x}^n_{\Vc_\star}\|\eqsp \I_{d|\Vc_\star|} & 
            -(\|\Vec{x}^n_{\Vc_\star}\|+\|\Vec{y}^n_{\Vc_\star}\|)\eqsp \I_{d|\Vc_\star|}
            \\
            -(\|\Vec{x}^n_{\Vc_\star}\|+\|\Vec{y}^n_{\Vc_\star}\|)\eqsp\I_{d|\Vc_\star|} & 
            3\|\Vec{y}^n_{\Vc_\star}\|\eqsp \I_{d|\Vc_\star|}
        \end{pmatrix}
        +\frac{36 \epsilon^2}{n}\begin{pmatrix}
            \|\Vec{x}^n_{\Vc_\star}\|^2\eqsp \I_{d|\Vc_\star|} & \0_{d|\Vc_\star|}\\ \0_{d|\Vc_\star|} & \|\Vec{y}^n_{\Vc_\star}\|^2 \eqsp\I_{d|\Vc_\star|}
        \end{pmatrix}
        \eqsp,
    \end{align*}
    and the l.h.s.\ to
    \begin{align*}
        -(2n+6\epsilon(\|\Vec{x}^n_{\Vc_\star}\|^2+\|\Vec{y}^n_{\Vc_\star}\|^2)) 
        \I_{2d|\Vc_\star|} \leq\begin{pmatrix}
            A^u_n & 0\\0 & - A^w_n
        \end{pmatrix}
        \eqsp.
    \end{align*}

    From \eqref{eq:comparison:eq}, we obtain
    \begin{align}
        \label{eq:comparison:eq2}
        \begin{split}
            \kappa\left(
                \tilde{u}_{\Vc_\star}(t_n,\vec{x}^n_{\Vc_\star})
                -
                \tilde{w}_{\Vc_\star}(s_n,\vec{y}^n_{\Vc_\star})
            \right)\eqsp
            \leq&~ \Hb_{\Vc_\star}\left(
                \Vec{x}^n_{\Vc_\star},\eqsp
                \tilde{u}_{\Vc_\star}(t_n,\vec{x}^n_{\Vc_\star}),\eqsp
                q^u_n,\eqsp
                A^u_n,\eqsp 
                \l(\tilde{u}_{\Vc_{\star,k}^i}\big(t_n,\mathfrak{e}_{\Vc_\star,k}^i\l(\vec{x}^n_{\Vc_\star}\r)\big)\r)_{i\in\Vc_\star,k\in\N}
            \right)
            \\
            &~- \Hb_{\Vc_\star}\left(
                \Vec{y}^n_{\Vc_\star},\eqsp 
                \tilde{w}_{\Vc_\star}(s_n,\vec{y}^n_{\Vc_\star}),\eqsp
                q^w_n,\eqsp
                A^w_n,\eqsp 
                \l(
                    \tilde{w}_{\Vc_{\star,k}^i}\big(s_n,\mathfrak{e}_{\Vc_\star,k}^i\l(\vec{y}^n_{\Vc_\star}\r)\big)
                \r)_{i\in\Vc_\star,k\in\N}
            \right)
            \eqsp.
        \end{split}
    \end{align}
    From Assumption H\ref{hypH:model_parameters} together with the definitions of $q^u_n$, $q^w_n$, $A^u_n$, and $A^w_n$, there exists a constant $\check{C}>0$ (which may change from line to line) depending only on $b$, $\sigma$, $\gamma$, and $(p_k)_{k\geq0}$ such that, from \eqref{eq:comparison:eq2}, we get
    \begin{align}
        \label{eq:comparison:eq3}
            \kappa\left(
                \tilde{u}_{\Vc_\star}(t_n,\vec{x}^n_{\Vc_\star})
                -
                \tilde{w}_{\Vc_\star}(s_n,\vec{y}^n_{\Vc_\star})
            \right)\eqsp
            \leq&~ 
            \check{C}\left(
                n\|\Vec{x}^n_{\Vc_\star} -\Vec{y}^n_{\Vc_\star}\|^2
                +
                \epsilon(\|\Vec{x}^n_{\Vc_\star}\|^3+\|\Vec{y}^n_{\Vc_\star}\|^3)
                +
                \frac{\epsilon}{n}(\|\Vec{x}^n_{\Vc_\star}\|^4+\|\Vec{y}^n_{\Vc_\star}\|^4)
            \right)
            +\Delta_n^1 +\Delta^2_n
            \eqsp.
    \end{align}
    with
    \begin{align*}
        \Delta^1_n
        :=&~
        \sup_{\Vec{a}_{\Vc_\star}\in A^{|\Vc_\star|}}\Bigg\{
            \sum_{i\in\Vc_\star} 
            \gamma\left(i,x^n_i, \iota^{-1}(\Vec{x}^n_{\Vc_\star}),a_i\right)
            \bigg[\sum_{k\geq 0}
            \left(
                \tilde{u}_{\Vc_{\star,k}^i}\big(t_n,\mathfrak{e}_{\Vc_\star,k}^i\l(\vec{x}^n_{\Vc_\star}\r)\big)
                -
                \tilde{w}_{\Vc_{\star,k}^i}\big(s_n,\mathfrak{e}_{\Vc_\star,k}^i\l(\vec{y}^n_{\Vc_\star}\r)\big)
            \right)
            \eqsp p_k\left(i,x^n_i, \iota^{-1}(\Vec{x}^n_{\Vc_\star}),a_i\right)
        \\
        &\qquad\qquad\qquad\qquad\qquad\qquad\qquad\qquad\qquad\qquad\qquad\qquad\qquad\qquad
        - \big(
                \tilde{u}_{\Vc_{\star}}\big(t_n,\vec{x}^n_{\Vc_\star}\big)
                -
                \tilde{w}_{\Vc_{\star}}\big(s_n,\vec{y}^n_{\Vc_\star}\big)
            \big)
        \bigg]
        \Bigg\}
        \eqsp,
        \\
        \Delta^2_n
        :=&~
        \sup_{\Vec{a}_{\Vc_\star}\in A^{|\Vc_\star|}}\Bigg\{
            \sum_{i\in\Vc_\star} 
            \gamma\left(i,x^n_i, \iota^{-1}(\Vec{x}^n_{\Vc_\star}),a_i\right)
            \sum_{k\geq 0}
            \tilde{w}_{\Vc_{\star,k}^i}\big(s_n,\mathfrak{e}_{\Vc_\star,k}^i\l(\vec{y}^n_{\Vc_\star}\r)\big)
            \left(
                p_k\left(i,x^n_i, \iota^{-1}(\Vec{x}^n_{\Vc_\star}),a_i\right)
                -
                p_k\left(i,y^n_i, \iota^{-1}(\Vec{y}^n_{\Vc_\star}),a_i\right)
            \right)
        \Bigg\}
        \eqsp.
    \end{align*}

    First, we deal with $\Delta^1_n$. From the definition of $M_{\epsilon,n}$, we have
    \begin{align*}
        &\tilde{u}_{\Vc_\star}(t_n,\vec{x}^n_{\Vc_\star}) - \tilde{w}_{\Vc_\star}(s_n,\vec{y}^n_{\Vc_\star})
        -\frac{n}{2}\|\vec{x}^n_{\Vc_\star}-\vec{y}^n_{\Vc_\star}\|^2-\epsilon\l(\|\vec{x}^n_{\Vc_\star}\|^3+\|\vec{y}^n_{\Vc_\star}\|^3 + |\Vc_\star|^3 + \Lambda(\Vc_\star)\r)
        \\
        &\geq
        \tilde{u}_{\Vc_{\star,k}^i}\big(
            t_n,
            \mathfrak{e}_{\Vc_\star,k}^i\l(\vec{x}^n_{\Vc_\star}\r)
        \big)
        -
        \tilde{w}_{\Vc_{\star,k}^i}\big(s_n,\mathfrak{e}_{\Vc_\star,k}^i\l(\vec{y}^n_{\Vc_\star}\r)\big)
        -\frac{n}{2}\|\mathfrak{e}_{\Vc_\star,k}^i\l(\vec{x}^n_{\Vc_\star}\r)-\mathfrak{e}_{\Vc_\star,k}^i\l(\vec{y}^n_{\Vc_\star}\r)\|^2
        \\
        &\qquad\qquad\qquad\qquad\qquad\qquad\qquad-
        \epsilon\l(\|\mathfrak{e}_{\Vc_\star,k}^i\l(\vec{x}^n_{\Vc_\star}\r)\|^3+\|\mathfrak{e}_{\Vc_\star,k}^i\l(\vec{y}^n_{\Vc_\star}\r)\|^3 + |\Vc_{\star,k}^i|^3 + \Lambda(\Vc_{\star,k}^i)\r)
        \eqsp.
    \end{align*}
    This means that we get
    \begin{align*}
        \Delta^1_n\leq \Delta^{1,a}_n +
        \Delta^{1,b}_n +
        \Delta^{1,c}_n +
        \Delta^{1,d}_n
        \eqsp,
    \end{align*}
    with
    \begin{align*}
        \Delta^{1,a}_n 
        :=&~
        \frac{n}{2}\sup_{\Vec{a}_{\Vc_\star}\in A^{|\Vc_\star|}}
        \sum_{i\in\Vc_\star}
        \sum_{k\geq 0}
        \left(
            \|\mathfrak{e}_{\Vc_\star,k}^i\l(\vec{x}^n_{\Vc_\star}\r)-\mathfrak{e}_{\Vc_\star,k}^i\l(\vec{y}^n_{\Vc_\star}\r)\|^2 -\|\vec{x}^n_{\Vc_\star}-\vec{y}^n_{\Vc_\star}\|^2
        \right)(\gamma p_k)\left(i,x^n_i, \iota^{-1}(\Vec{x}^n_{\Vc_\star}),a_i\right)
        \eqsp,
        \\
        \Delta^{1,b}_n 
        :=&~
        \epsilon \sup_{\Vec{a}_{\Vc_\star}\in A^{|\Vc_\star|}}
        \sum_{i\in\Vc_\star}
        \sum_{k\geq 0}
        \left(
            \|\mathfrak{e}_{\Vc_\star,k}^i\l(\vec{x}^n_{\Vc_\star}\r)\|^3+\|\mathfrak{e}_{\Vc_\star,k}^i\l(\vec{y}^n_{\Vc_\star}\r)\|^3
            -
            \|\vec{x}^n_{\Vc_\star}\|^3-\|\vec{y}^n_{\Vc_\star}\|^3
        \right)(\gamma p_k)\left(i,x^n_i, \iota^{-1}(\Vec{x}^n_{\Vc_\star}),a_i\right)
        \eqsp,
        \\
        \Delta^{1,c}_n 
        :=&~
        \epsilon \sup_{\Vec{a}_{\Vc_\star}\in A^{|\Vc_\star|}}
        \sum_{i\in\Vc_\star}
        \sum_{k\geq 0}
        \left(
            |\Vc_{\star,k}^i|^3
            -
            |\Vc_{\star}|^3
        \right)(\gamma p_k)\left(i,x^n_i, \iota^{-1}(\Vec{x}^n_{\Vc_\star}),a_i\right)
        \eqsp,
        \\
        \Delta^{1,d}_n 
        :=&~
        \epsilon \sup_{\Vec{a}_{\Vc_\star}\in A^{|\Vc_\star|}}
        \sum_{i\in\Vc_\star}
        \sum_{k\geq 0}
        \left(
            \Lambda(\Vc_{\star,k}^i)
            -
            \Lambda(\Vc_{\star})
        \right)(\gamma p_k)\left(i,x_i^n, \iota^{-1}(\Vec{x}^n_{\Vc_\star}),a_i\right)
        \eqsp.
    \end{align*}
    Since $\|\mathfrak{e}_{\Vc_\star,k}^i\l(\vec{x}^n_{\Vc_\star}\r)-\mathfrak{e}_{\Vc_\star,k}^i\l(\vec{y}^n_{\Vc_\star}\r)\|^2 -\|\vec{x}_{\Vc_\star}^n-\vec{y}^n_{\Vc_\star}\|^2 = (k-1)|x^n_i-y^n_i|^2$, using Assumption H\ref{hypH:model_parameters}(ii)-(iii), we have that
    \begin{align*}
        \Delta^{1,a}_n
        \leq&~
        \frac{n}{2}\sup_{\Vec{a}_{\Vc_\star}\in A^{|\Vc_\star|}}
        \sum_{i\in\Vc_\star}
        \sum_{k\geq 0}
        (k-1)|x_i-y_i|^2(\gamma p_k)\left(i,x_i, \iota^{-1}(\Vec{x}_\Vc),a_i\right)
        \\
        \leq &~
        C_\gamma
        C^1_\Phi\eqsp
        \frac{n}{2} \sum_{i\in\Vc_\star}|x^n_i-y^n_i|^2 
        =
        C_\gamma
        C^1_\Phi\eqsp
        \frac{n}{2}\|\vec{x}_{\Vc_\star}^n-\vec{y}^n_{\Vc_\star}\|^2
        \leq
        \check{C}\eqsp\frac{n}{2}\|\vec{x}_{\Vc_\star}^n-\vec{y}^n_{\Vc_\star}\|^2
        \eqsp.
    \end{align*}
    Mean value theorem yields that
    $\|\mathfrak{e}_{\Vc_\star,k}^i(\vec{x}^n_{\Vc_\star})\|^3-\|\vec{x}^n_{\Vc_\star}\|^3\leq 3/2 \eqsp \|\mathfrak{e}_{\Vc_\star,k}^i(\vec{x}^n_{\Vc_\star})\|\eqsp (k-1)|x_i^n|^2$. Combining this with
    \begin{align*}
        \|\mathfrak{e}_{\Vc_\star,k}^i(\vec{x}^n_{\Vc_\star})\| - \sqrt{(k-1)|x_i^n|^2} 
        =
        \frac{\|\vec{x}^n_{\Vc_\star}\|^2 }{\|\mathfrak{e}_{\Vc_\star,k}^i(\vec{x}^n_{\Vc_\star})\| + \sqrt{(k-1)|x_i^n|^2}}
        \leq \frac{\|\vec{x}^n_{\Vc_\star}\|^2 }{2 \sqrt{(k-1)|x_i^n|^2}}
        \eqsp,
    \end{align*}
    we obtain
    \begin{align*}
        &\epsilon \sup_{\Vec{a}_{\Vc_\star}\in A^{|\Vc_\star|}}
        \sum_{i\in\Vc_\star}
        \sum_{k\geq 0}
        \left(
            \|\mathfrak{e}_{\Vc_\star,k}^i\l(\vec{x}^n_{\Vc_\star}\r)\|^3
            -
            \|\vec{x}^n_{\Vc_\star}\|^3
        \right)(\gamma p_k)\left(i,x^n_i, \iota^{-1}(\Vec{x}^n_{\Vc_\star}),a_i\right)
        \\
        &\leq
        \frac{3\epsilon}{2} \sup_{\Vec{a}_{\Vc_\star}\in A^{|\Vc_\star|}}
        \sum_{i\in\Vc_\star}
        \sum_{k\geq 0}
        \left(
            \frac{1}{2}\|\vec{x}^n_{\Vc_\star}\|^2\eqsp
            \sqrt{k-1}|x_i^n|
             + (k-1)^{3/2}|x_i^n|^{3}
        \right)(\gamma p_k)\left(i,x^n_i, \iota^{-1}(\Vec{x}^n_{\Vc_\star}),a_i\right)
        \\
        &\leq
        \frac{3\epsilon}{2}C_\gamma
        \left(
            \frac{C_\Phi^1}{2}\|\vec{x}^n_{\Vc_\star}\|^2
            \sum_{i\in\Vc_\star}
            |x_i^n|
            + C_\Phi^2
            \sum_{i\in\Vc_\star}|x_i^n|^{3}
        \right)
        \\
        &\leq
        \frac{3\epsilon}{2}C_\gamma
        \left(
            \frac{C_\Phi^1}{2}\sqrt{|\Vc_\star|}
            \eqsp
            \|\vec{x}^n_{\Vc_\star}\|^3
            +
            C_\Phi^2
            \|\vec{x}^n_{\Vc_\star}\|^3
        \right)\leq \check{C}\eqsp\epsilon\sqrt{|\Vc_\star|}
            \eqsp
            \|\vec{x}^n_{\Vc_\star}\|^3
        \eqsp,
    \end{align*}
    where in the last inequality we used Cauchy--Schwarz inequality to get $\sum_{i\in\Vc_\star}|x_i^n|\leq \sqrt{|\Vc_\star|}\eqsp \|\vec{x}^n_{\Vc_\star}\|$ and
    \begin{align*}
        \sum_{i\in\Vc_\star}|x_i^n|^{3}
        \leq&~
        \left(
            \sum_{i\in\Vc_\star}|x_i^n|^{2}
        \right)^{1/2}\eqsp
        \left(
            \sum_{i\in\Vc_\star}|x_i^n|^{4}
        \right)^{1/2}
        \leq
        \|\vec{x}^n_{\Vc_\star}\|\eqsp
        \left(\max_{i\in\Vc_\star}|x_i^n|^{2}\right)\eqsp
        \left(
            \sum_{i\in\Vc_\star}|x_i^n|^{2}
        \right)^{1/2}
        \\
        \leq&~
        \|\vec{x}^n_{\Vc_\star}\|\eqsp
        \left(
            \sum_{i\in\Vc_\star}|x_i^n|^{2}
        \right)^{1/2}\eqsp
        \left(
            \sum_{i\in\Vc_\star}|x_i^n|^{2}
        \right)^{1/2} = \|\vec{x}^n_{\Vc_\star}\|^3
        \eqsp.
    \end{align*}
    Inverting the roles of $\vec{x}^n_{\Vc_\star}$ and $\vec{y}^n_{\Vc_\star}$, we have that $\Delta^{1,c}_n\leq \check{C}\eqsp\epsilon\sqrt{|\Vc_\star|} \eqsp (\|\vec{x}^n_{\Vc_\star}\|^3+\|\vec{y}^n_{\Vc_\star}\|^3)$. Replacing $x_i^n$ with $1$ in the previous bound, we get that
    $\Delta^{1,c}_n \leq  \check{C}\eqsp\epsilon |\Vc_\star|^2$. Moreover, the bound $\Lambda(\Vc_{\star,k}^i)-\Lambda(\Vc_{\star})\leq k$ implies that $\Delta^{1,c}_n \leq  \check{C}\eqsp\epsilon$.

    We now focus on $\Delta^{2}_n$. It is clear that $\Delta^{2}_n\to 0$, for $n\to\infty$, as a consequence of the dominated convergence theorem, as $(\vec{x}^n_{\Vc_\star}, \vec{y}^n_{\Vc_\star})\to (\vec{x}^\star_{\Vc_\star},\vec{x}^\star_{\Vc_\star})$, for $n\to\infty$. Such a result can be applied since, from \eqref{eq:growth-condition:value-fct} and Assumption H\ref{hypH:model_parameters}, we have the following bound
    \begin{align*}
        |\Delta^{2}_n|
        \leq&~ 
        \sup_{\Vec{a}_{\Vc_\star}\in A^{|\Vc_\star|}}\Bigg\{
            \sum_{i\in\Vc_\star} 
            \gamma\left(i,x^n_i, \iota^{-1}(\Vec{x}^n_\Vc),a_i\right)
            \sum_{k\geq 0}
            \tilde{w}_{\Vc_{\star,k}^i}\big(s_n,\mathfrak{e}_{\Vc_\star,k}^i\l(\vec{y}^n_{\Vc_\star}\r)\big)
            \left(
                p_k\left(i,x^n_i, \iota^{-1}(\Vec{x}^n_\Vc),a_i\right)
                +
                p_k\left(i,y^n_i, \iota^{-1}(\Vec{y}^n_\Vc),a_i\right)
            \right)
        \Bigg\}
        \\
        \leq&~
        C\eqsp C_\gamma
        \sum_{i\in\Vc_\star}
        \sum_{k\geq 0}
        (\|\mathfrak{e}_{\Vc_\star,k}^i\l(\vec{y}^n_{\Vc_\star}\r)\|^2+ |\Vc_{\star,k}^i|^2)\left(
                p_k\left(i,x^n_i, \iota^{-1}(\Vec{x}^n_\Vc),a_i\right)
                +
                p_k\left(i,y^n_i, \iota^{-1}(\Vec{y}^n_\Vc),a_i\right)
            \right)
        \\
        \leq&~
        C\eqsp C_\gamma
        \sum_{i\in\Vc_\star}
        \sum_{k\geq 0}
        (\|\vec{y}^n_{\Vc_\star}\|^2+ (k-1)|y^i_n|^2+ 2|\Vc_\star|^2 + 2(k-1)^2)\left(
                p_k\left(i,x^n_i, \iota^{-1}(\Vec{x}^n_\Vc),a_i\right)
                +
                p_k\left(i,y^n_i, \iota^{-1}(\Vec{y}^n_\Vc),a_i\right)
            \right)
        \\
        \leq&~
        \check{C}(\|\vec{y}^n_{\Vc_\star}\|^2+|\Vc_\star|)
        \eqsp,
    \end{align*}
    which is a uniform bound since $\vec{y}^n_{\Vc_\star}\to\vec{x}^\star_{\Vc_\star}$.

    Sending $n$ to infinity, it follows from \eqref{eq:comparison:eq3}
    \begin{align*}
            \kappa\left(
                \tilde{u}_{\Vc_\star}(t_\star,\vec{x}^\star_{\Vc_\star})
                -
                \tilde{w}_{\Vc_\star}(t_\star,\vec{x}^\star_{\Vc_\star})
            \right)\eqsp
            \leq&~ 
            \check{C}\eqsp
                \epsilon(
                    \sqrt{|\Vc_\star|} \eqsp \|\Vec{x}^\star_{\Vc_\star}\|^3
                    +
                    |\Vc_\star|^2
                )
            \eqsp.
    \end{align*}
    Since the constant $\check{C}$ is independent on $\epsilon$, taking $\kappa>\check{C}$, the previous equation is a contradiction of \eqref{eq:comparison:absurdHP}.

\end{proof}

\section{The mean field regime}
\label{section:MF-regime}

Modeling all possible binary interactions quickly becomes prohibitively costly, both analytically and computationally. In many applications, it is natural to consider symmetric interactions, where each individual reacts only to the overall distribution of the population and not to the labels of the other participants. This leads to the \emph{mean-field (MF) setting}, a framework widely employed in control theory due to its broad applicability in real-world problems \citep[see, \eg,][]{nourian2012nash,carmona2013mean,fornasier2014mean,seguret2021mean}.

\subsection{Symmetric controls}

The mean-field setting is built on two fundamental assumptions. The first is \emph{anonymity}, which requires that agents react only to the empirical distribution of the population’s positions, without distinguishing between individual identities. The second is \emph{homogeneity}, which stipulates that an agent’s behavior is independent of its specific label. These assumptions naturally restrict the generality of the model parameters: anonymity is reflected through dependence on the empirical distribution $\pi(\lambda)$ instead of the full configuration $\lambda$, while homogeneity is enforced by requiring invariance with respect to the particle index $i \in \Ic$. Formally, this is implied by the following assumption.

\begin{hypH}
    \label{hypH:MF-setting}
    There exists a family of coefficients
    \begin{align*}
        \bigl(b^{\text{MF}},\sigma^{\text{MF}},\gamma^{\text{MF}},(p^{\text{MF}}_k)_{k \geq 0},\psi^{\text{MF}}\bigr)
        : \R^d \times \Nc(\R^d) \times A \to 
        \R^d \times \R^{d \times d^\prime} \times \R_+ \times [0,1]^\N \times \R
        \quad\text{ and }\quad
        \Psi^{\text{MF}}:\Nc(\R^d)\to \R\eqsp,
    \end{align*}
    such that
    \begin{align}
    \label{eq:MF-criterion}
        \big(b,\sigma,\gamma,(p_k)_{k\geq 0},\psi\big)(i,x,\lambda,a)
        =
        \big(
            b^{\mathrm{MF}},\sigma^{\mathrm{MF}},\gamma^{\mathrm{MF}},(p^{\mathrm{MF}}_k)_{k \geq 0},\psi^{\mathrm{MF}},\Psi^{\mathrm{MF}}
        \big)(x,\pi(\lambda),a)
        \quad\text{ and }\quad
        \Psi(\lambda) =
        \Psi^{\mathrm{MF}}(\pi(\lambda))
        \eqsp,
    \end{align}
    for $(i,x,\lambda,a) \in \Ic\times \R^d\times E\times A$, 
    where $\pi: E \to \mathcal{P}(\mathbb{R}^d)$ denotes the projection mapping a configuration $\lambda$ to the empirical distribution of particle positions.
\end{hypH}

Consider now the following class of controls, which we call \emph{symmetric}. These are controls that assign the same action to all particles occupying the same position.
\begin{definition}[Symmetric control]
    Fix $(t,\lambda) \in [0,T]\times E$.
    We say that $\beta = (\beta^i)_{i\in \Ic}$ is an \emph{symmetric control}, and we denote $\beta\in\CtrlStandard^\mathfrak{s}_{(t,\lambda)}$, if $\beta\in\CtrlStandard$ and, for $\xi^{t,\lambda;\beta}=\sum_{i\in\Vc^{t,\lambda;\beta}}\delta_{(i,Y^{i,\beta}_s)}$ solution of \eqref{SDE:strong}, we have
    \begin{align}
        \label{eq:cyclical_strong_ctrl_condition}
        \beta^i_s = \beta^j_s\eqsp,
        \quad \text{ whenever } Y^{i,\beta}_s = Y^{j,\beta}_s\eqsp,
    \end{align}
    for $s\in[0,T]$ and $i,j\in\Vc^{t,\lambda;\beta}_s$.
\end{definition}
The MF structure significantly reduces the complexity of the problem while preserving the key probabilistic features of the general setting. In particular, since the coefficients of the HJB equation \eqref{eq:HJB} depend only on the empirical measure and not on individual indices, they are invariant under permutations of particle labels. This invariance naturally propagates to the feedback optimizer, ensuring that optimal controls are symmetric. Consequently, in formulating the control problem, it is sufficient to restrict attention to the class of symmetric admissible controls as we prove in the following proposition.

\begin{proposition}[Restriction to \emph{symmetric} controls]
    \label{prop:MF-symmetric-controls}
    Suppose Assumption H\ref{hypH:model_parameters}-H\ref{hypH:coercivity_hyp}-H\ref{hypH:MF-setting} hold. Fix $(t,\lambda=\sum_{i\in\Vc}\delta_{(i,x_i)}) \in [0,T]\times E$. Then, we have
    \begin{align}
        \label{eq:MF-symmetric-controls}
        v(t,\lambda) &= v(t,\mathfrak{s}\cdot\lambda)
        \eqsp,
        \quad\text{ for }\mathfrak{s}\in\mathfrak{S}_\Vc
        \eqsp,
        \qquad\text{ and }\qquad
        v(t,\lambda) = \inf_{\beta\in\CtrlStandard^\mathfrak{s}_{(t,\lambda)}} J(t,\lambda;\beta)
        \eqsp.
    \end{align}
\end{proposition}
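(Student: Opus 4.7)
The plan is to address the two claims in order, first using the HJB characterization and the comparison principle to obtain the permutation invariance of $v$, and then exploiting this invariance together with a symmetric measurable selection to restrict the infimum to $\CtrlStandard^\mathfrak{s}_{(t,\lambda)}$.

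For the permutation invariance $v(t,\lambda)=v(t,\mathfrak{s}\cdot\lambda)$, I would argue via the viscosity characterization. Under Assumption \ref{hypH:MF-setting}, the coefficients depend on $(x_i,\pi(\iota^{-1}(\vec{x}_\Vc)),a_i)$ only, so the Hamiltonian $\Hb_\Vc$ in \eqref{eq:def-Hamiltonian} is invariant under any $\mathfrak{s}\in\mathfrak{S}_\Vc$ that simultaneously permutes the coordinates of $\vec{x}_\Vc$, $\vec{a}_\Vc$, and the vector $(r_{(i,\ell)})_{i\in\Vc,\ell\in\N}$, as is the terminal condition $\Psi^{\mathrm{MF}}\circ\pi$. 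Consequently, $(t,\lambda)\mapsto v(t,\mathfrak{s}^{-1}\cdot\lambda)$ is again a viscosity solution of the HJB system \eqref{eq:HJB} satisfying the growth bound \eqref{eq:growth-condition:value-fct}, and the comparison principle in \Cref{prop:comparison_principle} forces $v(t,\lambda)=v(t,\mathfrak{s}^{-1}\cdot\lambda)$, hence the first equality of \eqref{eq:MF-symmetric-controls}.

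For the second equality, the inequality $v(t,\lambda)\leq\inf_{\beta\in\CtrlStandard^\mathfrak{s}_{(t,\lambda)}} J(t,\lambda;\beta)$ is immediate from the inclusion $\CtrlStandard^\mathfrak{s}_{(t,\lambda)}\subset\CtrlStandard$. For the reverse inequality, I would combine the DPP with a symmetry-preserving refinement of the measurable selection used in the proof of \Cref{Prop:DPP}. The key structural observation is that, in the MF regime, the infimum defining $\Hb_\Vc$ decouples across particles: each $a_i$ appears only in the block $F(x_i,\pi(\iota^{-1}(\vec{x}_\Vc)),a_i)$ collecting the drift, diffusion, branching and running-cost contributions at site $i$. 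Applying the Kuratowski--Ryll-Nardzewski theorem to this parametric family produces a Borel selector $\vec{\mathfrak{a}}^\star_\Vc:[0,T]\times\R^{d|\Vc|}\to A^{|\Vc|}$ achieving the infimum, which one may moreover choose to satisfy $(\vec{\mathfrak{a}}^\star_\Vc)_i=(\vec{\mathfrak{a}}^\star_\Vc)_j$ whenever $x_i=x_j$ by first selecting a symmetric function on the quotient induced by equality of positions and then lifting it to $A^{|\Vc|}$. Splicing this symmetric selector along a fine mesh $t=t_0<\cdots<t_N=T$, exactly as in \Cref{Prop:DPP}, and using the permutation invariance of $v$ at each mesh point to propagate the DPP error, yields $\beta^\mathfrak{s}\in\CtrlStandard^\mathfrak{s}_{(t,\lambda)}$ with $J(t,\lambda;\beta^\mathfrak{s})\leq v(t,\lambda)+\varepsilon$; letting $\varepsilon\to 0$ closes the argument.

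The main obstacle will be ensuring that the symmetric structure of the selector is preserved dynamically, not only at the initial time, since branching events continuously reshape which particles share the same position. This requires iterating the symmetric measurable selection across the random branching times while keeping the resulting control square-integrable in the sense of \eqref{eq:bound_sup_beta_2}. The pseudo-Markov property invoked in the proof of \Cref{Prop:DPP} makes this iteration well posed, because after each branching the system restarts from a configuration where both the permutation invariance of $v(t,\cdot)$ and the separability of $\Hb_\Vc$ still hold; the moment bounds of \Cref{Lemma:bound_moment2_population} and the uniform control of \Cref{prop:bound_eps_opt_control} then guarantee that $\beta^\mathfrak{s}\in\CtrlStandard$ and that the limit $\varepsilon\to 0$ can be taken.
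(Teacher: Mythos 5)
Your proposal follows essentially the same route as the paper: permutation invariance is obtained by noting that the MF Hamiltonian and terminal condition are label-free, so $v(t,\mathfrak{s}\cdot\lambda)$ is again a viscosity solution and the comparison principle (\Cref{prop:comparison_principle}) forces equality; the restriction to symmetric controls then follows from a symmetric Borel selector of $\epsilon$-minimizers of the Hamiltonian. The only cosmetic difference is that you assemble the $\epsilon$-optimal symmetric control by splicing the selector along a time mesh via the DPP, whereas the paper plugs the selector directly into a feedback form; both constructions rest on the same selection argument and the same $\epsilon$-optimality conclusion.
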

    
\begin{proof}
    \emph{Step 1: Permutation invariance and symmetry of the value function.}
    In the mean-field setting the coefficients depend only on $(x,\pi(\lambda),a)$, hence they are invariant under any relabeling (permutation) $\mathfrak{s}\in\mathfrak{S}_\Vc$. If $v$ is the value function, then
    \begin{align*}
        u(t,\lambda):=v\bigl(t,\mathfrak{s}\cdot \lambda\bigr)
    \end{align*}
    solves the same HJB equation \eqref{eq:HJB} with the same terminal condition as $v$. This is a consequence of the fact that the Hamiltonian $\Hb_{\cdot}$ is built from $b^{\text{MF}}$, $\sigma^{\text{MF}}$, $\gamma^{\text{MF}}$, $(p^{\text{MF}}_k)_{k\geq 0}$, $\psi^{\text{MF}}$, and $\Psi^{\text{MF}}$ and depends only on $(x,\pi(\lambda))$. Therefore, it is permutation invariant. By the characterization of the control problem via viscosity solutions and the comparison principle (\Cref{prop:viscosity_solution} and \Cref{prop:comparison_principle}), equation \eqref{eq:HJB} has at most one viscosity solution in the admissible class; hence $u=v$ and the first equation of \eqref{eq:MF-symmetric-controls} is proved.

    \emph{Step 2: Symmetric measurable selector for the Hamiltonian.}
    We now turn into the second equation of \eqref{eq:MF-symmetric-controls}. Since $\CtrlStandard^\mathfrak{s}_{(t,\lambda)}\subset \CtrlStandard$, we have that
    \begin{align*}
        v(t,\lambda) \leq \inf_{\beta\in\CtrlStandard^\mathfrak{s}_{(t,\lambda)}} J(t,\lambda;\beta)
        \eqsp.
    \end{align*}
    To prove the reverse inequality, we need to show that there exists a symmetric control $\beta\in\CtrlStandard^\mathfrak{s}_{(t,\lambda)}$ such that
    \begin{align*}
        v(t,\lambda) + \epsilon \geq J(t,\lambda;\beta)
        \eqsp,
    \end{align*}
    for an arbitrarily chosen $\epsilon>0$.

    Fix $\epsilon>0$. 
    From the previous step, we have that the Hamiltonian $\Hb_{\cdot}$ depends only on $(t,x,\pi(\lambda))$ and not on the particle labels.
    Note that the number of particles $|\Vc|$ is a mean-field quantity, as $|\Vc|=\langle 1,\pi(\lambda)\rangle$.
    Define the set-function $\Hb^{\text{MF}}_\epsilon$ as
    \begin{align*}
        \Hb^{\text{MF}}_\epsilon
        \big(\Vec{x}_\Vc,r, q_\Vc,M_\Vc,(r_{(i,\ell)})_{i\in\Vc,\ell\in\N}\big):=\bigg\{
            \vec{a}_{|\Vc|}\in A^{|\Vc|}~:&~ 
            \mathfrak{b}_\Vc\left(\Vec{x}_\Vc, \Vec{a}_\Vc\right)^\top q_\Vc
            +  \frac{1}{2}\text{Tr}\left(
                \Sigma_\Vc(\Sigma_\Vc)^\top\left(\Vec{x}_\Vc, \Vec{a}_\Vc\right)D^2 M_\Vc
            \right)
        \\
        &+ \sum_{i\in\Vc} \gamma\left(i,x_i, \iota^{-1}(\Vec{x}_\Vc),a_i\right)\Bigg(\sum_{k\geq 0}r_{(i,k)}\eqsp p_k\left(i,x_i, \iota^{-1}(\Vec{x}_\Vc),a_i\right)- r \Bigg)
        \\
        &+ \sum_{i\in\Vc} \psi\left(i,x_i, \iota^{-1}(\Vec{x}_\Vc),a_{\Vc,i}\right)
            \leq \Hb_\Vc\big(\Vec{x}_\Vc, v_\Vc, D v_\Vc,D^2 v_\Vc,(v_{\Vc^i_\ell})_{i\in\Vc,\ell\in\N}\big)+\epsilon
        \bigg\}
        \eqsp.
    \end{align*}
    For each $(t,x,\pi(\lambda))$, the set $\Hb^{\text{MF}}_\epsilon$ is nonempty and depends only on $(t,x,\pi(\lambda))$. Following the same lines of \Cref{prop:App-DPP:measurable_selection}, there exists a Borel selector $a^\epsilon$ such that
    \begin{align*}
        a^\epsilon\big(\Vec{x}_\Vc,r, q_\Vc,M_\Vc,(r_{(i,\ell)})_{i\in\Vc,\ell\in\N}\big)\in \Hb^{\text{MF}}_\epsilon\big(\Vec{x}_\Vc,r, q_\Vc,M_\Vc,(r_{(i,\ell)})_{i\in\Vc,\ell\in\N}\big)
        \eqsp.
    \end{align*}
    Since $\Hb^{\text{MF}}_\epsilon$ is label-free, $a^\ast$ is \emph{symmetric} in the sense that if two particles share the same local arguments, they receive the same action.

    \emph{Step 3: Construction of a symmetric optimal feedback and conclusion.}
    Let $v$ be the viscosity solution of \eqref{eq:HJB}. Define the feedback control $\hat\beta$ by
    \begin{align*}
        \hat\beta^i_s \;=\; a^\ast \Big(s-,\eqsp Y^{i,\hat\beta}_{s-},\eqsp \pi(\xi^{\hat\beta}_{s-}),\eqsp D v_\Vc(\cdot),\eqsp D^2 v_\Vc(\cdot),\eqsp \big(v_{\Vc^i_\ell}(\cdot)\big)_{i\in\Vc,\ell\in\N}\Big),
    \end{align*}
    for $s\in[t,T]$ and $i\in\Vc^{t,\lambda;\hat\beta}_s$, where $\xi^{\hat\beta}$ is the solution of \eqref{SDE:strong} with initial condition $\xi^{t,\lambda;\hat\beta}_t = \lambda$ and control $\hat\beta$. The feedback $\hat\beta$ is well-defined and measurable, as it is constructed from the viscosity solution $v$ and the selector $a^\epsilon$, which are both measurable functions. Moreover, it is symmetric by construction, as it depends only on the empirical distribution $\pi(\xi^{\hat\beta}_s)$ and the local arguments of the particles.
    We consider the left limit $s-$ to ensure that the feedback is a predictable process.

    Moreover, by \Cref{prop:viscosity_solution} and \Cref{prop:comparison_principle}, the feedback $\hat\beta$ is $\epsilon$-optimal for \eqref{eq:def:cost_function}. Therefore, since $\epsilon>0$ was arbitrarily chosen, the search for optimizers can be restricted, without loss of generality, to the class of symmetric controls.
\end{proof}

From the invariance with respect to permutations established in \eqref{eq:MF-symmetric-controls}, the control problem can be reformulated in terms of the empirical distribution of the particles. In particular, the value function, cost functional, and associated dynamics depend only on the empirical measure of positions and not on the specific configuration of labeled particles. Hence, the natural state space in the MF framework is $\Nc(\R^d)$, rather than the configuration space $E$.


\subsection{Examples}
\label{Section:LQ}
We next present two examples of regular solutions within the linear-quadratic framework in the mean-field regime. 

\paragraph{Standard linear-quadratic case.}
We follow the path outlined in \citet{Pham:Conditional_LQ} and \citet{Pham_Basei:LQ}. Let $A:=\R^q$, $d^\prime=d$ and let the coefficients be as follows:
\begin{align*}
    b_t(x,\lambda,a)= B_t x+ \bar B_t a\eqsp,
    \qquad
    \sigma_t(x,\lambda,a)= \sigma_t\I\eqsp,
    \qquad
    \gamma_t(x,\lambda,a)= \gamma_t\eqsp, 
    \qquad
    p_k(x,\lambda,a)= p_k\eqsp,
\end{align*}
with $\I$ being the identity matrix, and $B$, $\bar B$, $\bar \sigma$, $\bar \gamma$ are bounded valued in $\R^{d \times d}$, $\R^{d \times p}$, $\R^{d \times d}$ and $\R_+$ respectively.

Let $\psi$ and $\Psi$ be as
\begin{align*}
    \psi_t(x,\lambda,a)= x^\top C_t x+ c_t\langle1,\lambda\rangle +a^\top \bar C_t a\eqsp,
    \qquad
    \Psi(\lambda) = \int_{\R^d}x^\top H x + h\langle1,\lambda\rangle^2\eqsp,
\end{align*}
where $t\mapsto C_t$ (resp. $t\mapsto \bar C_t$) is a bounded function in $\mathbb{S}^d$ (resp. $\mathbb{S}^q$), 
$t\mapsto c_t\in\R_+$ is bounded, $H\in \mathbb{S}^d$, and $h\geq0$.

We shall make the following assumptions on the coefficients of the model:
\begin{enumerate}[(i)]
    \item $C$ and $H$ are non-negative a.s.;
    \item $\bar C$ is uniformly positive definite, \ie, $\bar C_t\geq \epsilon\I_q$, for some $\epsilon>0$.
\end{enumerate}

We are now ready to use \Cref{Prop:verification2} by seeking a field $\big\{w_t(\lambda): \lambda\in\Nc(\R^d), t\in[0,T]\big\}$ that satisfies the local (sub)martingality conditions.

Let $w$ be as follows
\begin{align*}
    w_t(\lambda)= w^1_t(\lambda)+ w^2_t(\lambda) + w^3_t(\lambda), \qquad\text{ with }
    w^1_t(\lambda)=\int_{\R^d}x^\top Q_t x \lambda(dx)\eqsp,\quad
    w^2_t(\lambda) =p_t\langle1,\lambda\rangle^2\eqsp,
    \quad w^3_t(\lambda) = \bar p_t\langle1,\lambda\rangle\eqsp,
\end{align*}
for some funnctions $(Q,p,\bar p)$ with values in $\Sb^d\times\R\times \R$ such that
\begin{align*}
\begin{cases}
    dQ_t=\dot{Q}_t dt,\quad \text{ for }t\in[0,T], \quad Q_T = H\eqsp,\\
    dp_t=\dot{p}_tdt,\quad \text{ for }t\in[0,T], \quad p_T = h\eqsp,\\
    d\bar p_t=\dot{\bar p}_tdt,\quad \text{ for }t\in[0,T], \quad \bar p_T = 0\eqsp.
\end{cases}
\end{align*}
The terminal conditions ensure that $w_t(\lambda)= \Psi(\lambda)$. Now, we need to determine the generators $\dot{Q}$, $\dot{p}$ and $\dot{\bar p}$ to satisfy~\eqref{eq:verification_thm:inf_condition}.
From \eqref{SDE:strong}, Itô's formula yields
\begin{align}
\label{eq:LS:diffusion_w}
    \begin{split}
        w\left(t, \mu_{t}\right) &+ \int_0^t\int_{\R^d} \psi(x,\mu_u,\mathfrak{a}_u(x))\mu_u(dx)du
        = w\left(0, \mu_{0}\right) +
        \int_0^{t}\int_{\R^d}\Dc_u(x,\mu_u,\mathfrak{a}_u(x), Q_u, p_u, \bar p_u)\mu_u(dx)du + \mathbb{M}_t\eqsp,
    \end{split} 
\end{align}
with
\begin{align*}
    \Dc_u(x,\lambda,a, Q, p, \bar p):=&~ x^\top \dot{Q} x + \dot{p}\langle1,\lambda\rangle + \dot{\bar p} + \left( B_u x + \bar B_u a\right)^\top Q x 
    + x^\top Q \left( B_u x + \bar B_u a\right) 
    + \sigma_u^2 \text{Tr}(Q)\\
    &
    + (\gamma_u M_1) x^\top Q x + p \gamma_u\left(M_2 + M_1\langle1,\lambda\rangle\right) +  \bar p \gamma_u M_1 + x^\top C_u x+ c_u\langle1,\lambda\rangle +a^\top \bar C_u a
    \eqsp,
\end{align*}
$\mathbb{M}$ a martingale (after an eventual localization), $M_1:=\sum_{k\geq0}(k-1)p_k$, and $M_2:=\sum_{k\geq0}(k-1)^2p_k$. Completing the square in $\Dc$, we obtain
\begin{align*}
    \Dc_u(x,\lambda,a, Q, p, \bar p)
    :=&~
     \left(\dot{p} + p \gamma_u M_1 + c_u\right)\langle1,\lambda\rangle
    +\left(\dot{\bar p}+ \sigma_u^2 \text{Tr}(Q) + \bar p \gamma_u M_1 + p \gamma_u M_2\right) 
    + (a-\hat a_u(x, Q))^\top\bar C_u(a-\hat a_u(x, Q))
    \\
    &+x^\top \left(\dot{Q} + B_u^\top Q+ Q B_u + (\gamma_u M_1)Q + C_u 
    + \left(\bar B_u Q+\bar B_u^\top Q \right)^\top\bar C_u^{-1}\left(\bar B_u Q +\bar B_u^\top Q \right)\right) x 
    \eqsp,
\end{align*}
where
\begin{align*}
    \hat a_u(x, Q):= - \bar C_u^{-1}\left(\bar B_u Q +\bar B_u^\top Q \right)x\eqsp.
\end{align*}
Therefore, whenever
\begin{align}
\label{eq:LQ:Riccati}
    \begin{split}
        \dot{Q} + B_u^\top Q+ Q B_u + (\gamma_u M_1)Q + C_u 
            + 2Q\left(\bar B_u \bar C_u^{-1}\bar B_u + \bar B_u^\top \bar C_u^{-1}\bar B_u\right)Q=&~0\eqsp,\\
        \dot{p} + p \gamma_u M_1 + c_u=&~0\eqsp,
        \\
        \dot{\bar p}+ \sigma_u^2 \text{Tr}(Q) + \bar p \gamma_u M_1 + p \gamma_u M_2=&~0\eqsp,
    \end{split}
\end{align}
holds for $t\in[0,T]$, we have
\begin{align*}
    \Dc_u(x,\lambda,a, Q, p, \bar p)=\left(a-\hat a_u(x, Q)\right)^\top\bar C_u(a-\hat a_u(x, Q))\eqsp.
\end{align*}
Therefore, $\Dc\geq 0$, for $a\in A$ and it is zero for $a=\hat a_u(x, Q)$. Additionally, it is worth noting~\eqref{eq:LQ:Riccati} 
admit a solution since the first equation is a conventional Riccati equation, while the remaining two are linear ODEs.

This means that if the system of equations~\eqref{eq:LQ:Riccati} is satisfied, from
\eqref{eq:LS:diffusion_w} and the fact that $\Dc\geq 0$, we get the local submartingale property \textit{(ii)} of~\Cref{Prop:verification2}. Moreover, it is clear that it is zero for $\mathfrak{a}_u(x):= \hat a_u(x, Q)$, with $Q$ solution to the first equation in~\eqref{eq:LQ:Riccati}, satisfying the local martingale property \textit{(iii)} of~\Cref{Prop:verification2}. Therefore, such a control is an optimal one.

\paragraph{A Kinetic Example.}

In the case of a standard diffusion, we consider controls $\beta$ such that the diffusion satisfies the following SDE
\begin{align*}
    dX_t=\big(b(t,X_t) + \beta_s\big)dt + \sigma dB_t\eqsp,
\end{align*}
with $b$ Lipschitz in $x$ uniformly in $t$ and $\sigma$ a positive constant. In this setting, we look for a minimization of the cost function $\E\left[\frac{1}{2}\int_0^T|\beta_s|^2\right]$, which is usually called the kinetic energy for the controlled diffusion.

We adapt this problem to the case of branching processes. Let $A:=\R^q$, $d^\prime=d$ and let the coefficients be as follows:
\begin{align*}
    b_t(x,\lambda,a)= b(t,x) + a\eqsp,
    \qquad \sigma_t(x,\lambda,a)= \I\eqsp,
    \qquad
    \gamma_t(x,\lambda,a)= \gamma_t(x)\eqsp,
    \qquad
    p_k(x,\lambda,a)= p_k(x)\eqsp,
\end{align*}
with $b$, $\gamma$ and $p_k$ satisfying~\eqref{eq:bound_b_sigma_Lipschitz},~\eqref{eq:bound_b_sigma_gamma} and~\eqref{eq:bound:order1_2_Phi}. Let $\psi(x,\lambda,a):= \frac{1}{2}|a|^2$. We seek for a field $\left\{w_t(\lambda): \lambda\in\Nc(\R^d), t\in[0,T]\right\}$ such that
\begin{align*}
    w_t(\lambda)= \int_{\R^d}h(t,x)\lambda(dx)\eqsp,
\end{align*}
for a certain function $h$. From~\eqref{SDE:strong}, applying Itô's formula, we have
\begin{align}
\label{eq:LS:diffusion_w:kinetic}
    w\left(t, \mu_{t}\right) &+ \int_0^t\int_{\R^d} \psi(x,\mu_u,\mathfrak{a}_u(x))\mu_u(dx)du  = w\left(0, \mu_{0}\right) +
    \int_0^{t}\int_{\R^d}\Dc_u(x,\mu_u,\mathfrak{a}_u(x), h)\mu_u(dx)du + \mathbb{M}_t\eqsp, 
\end{align}
where
\begin{align*}
    \Dc_t(x,\lambda,a, h):= \partial_t h + b(t,x)^\top Dh + a^\top Dh + \frac{1}{2}\Delta h + \frac{1}{2}|a|^2 + \phi(t,x)h\eqsp,
\end{align*}
with $\phi(x):=\gamma_t(x)\Big(\sum_{k\geq0}kp_k(x) - 1\Big)$, $\mathbb{M}$ a martingale (after an eventual localization), and $\Delta$ the Laplacian operator. Operating as in the previous example, we see that whenever $h$ satisfies the following PDE
\begin{align}\label{eq:kinetic:semiparabolic}
    \begin{cases}
        \partial_t h + b(t,x)^\top Dh - \frac{1}{2}|Dh|^2 + \frac{1}{2}\Delta h + \phi(t,x)h=0\eqsp,
        \\
        h(T,x)=0\eqsp,
    \end{cases}
\end{align}
we have
\begin{align*}
    \Dc_u(x,\lambda,a, h)=\frac{1}{2}|a+Dh|^2\eqsp.
\end{align*}
This means that under~\eqref{eq:kinetic:semiparabolic}, $\Dc\geq 0$, for $a\in A$ and is zero for $a=-Dh$. Therefore, under~\eqref{eq:kinetic:semiparabolic}, we get property \textit{(ii)} of~\Cref{Prop:verification2}, and property \textit{(iii)}, for $\mathfrak{a}_s(x):= -Dh(s,x)$, showing that this control is an optimal one. The solution of~\eqref{eq:kinetic:semiparabolic} is standard and is an application of the Hopf--Cole transformation.
\section{Conclusion}
\label{Section:Conclusion}

In this work, we studied the stochastic control of interacting branching diffusion processes within a general configuration framework. The first main contribution is the formulation of the associated \emph{HJB equation,} obtained through a bijection with the topological union \(\sqcup_{\Vc \in \Igen}\R^{d|\Vc|}\). This structure allowed us to leverage differential tools in finite-dimensional Euclidean spaces to analyze the control problem.

We then provided a \emph{viscosity characterization} of the value function, including the proof of a \emph{comparison principle}, ensuring uniqueness within the class of functions satisfying the prescribed growth conditions. This establishes a rigorous link between the control problem and its PDE characterization.

Finally, we considered the \emph{mean-field reduction}, showing how the symmetry of interactions simplifies the problem by restricting the analysis to empirical measures and symmetric controls, while preserving the essential probabilistic features of the general setting.

This work also opens the way to further developments. In particular, the \emph{relaxed formulation} of the mean-field control problem, developed in the companion paper \citet{ocello2023controlled}, represents a fundamental step in extending the theory. Such a formulation is crucial to address compactness and existence issues and paves the way to studying \emph{scaling limits}, as in \citet{ocello2025controlled}, where superprocesses naturally arise as limiting objects. Furthermore, combining this path for mean-field interactions with the framework developed in \citet{de2024mean,de2025linear} would provide a natural avenue to study scaling limits in heterogeneous systems.

\paragraph{Acknowledgements.} This work is supported by Hi! PARIS and ANR/France 2030 program (ANR-23-IACL-0005). I am gratefully acknowledge Idris Kharroubi, Julien Claisse, Loïc Bethencourt, Étienne Tanré, and Rémi Catellier for many enriching discussions.

\appendix

\section{Well-posedness of the optimization problem}

\subsection{Proof of~\Cref{prop:existence_strong_branching}}
\label{Appendix:Proof:existence_strong_branching}

    Fix $(t,\lambda=\sum_{i\in \Vc}\delta_{(i,x^i)}) \in\R_+\times E$, and $\beta\in\CtrlStandard$. Using induction, we build the branching events of the population. We later show that such a process satisfies~\eqref{SDE:strong} and is well-posed. Since for each branch, the diffusion $\sigma$ and the jump rate $\gamma$ are bounded and the drift $b$ is linear in $(x,a)$, to ensure a well-posedness we must have that the mass does not explode in finite time, \ie,~\eqref{eq:non-explosion-moment1_mass}, and the first moment bounded, \ie,~\eqref{eq:non-explosion-moment1_population}.
    
    Define by induction an increasing sequence of stopping time $(\tau_k)_{k\in\N}$, a sequence of random variables $(V_k)_{k\in\N}$ valued in the set of finite subsets of $\Ic$ and a sequence of processes $(Y^{i,\beta}, i \in V_k)_{k\in\N}$ such that
    \begin{align*}
        \xi^{t,\lambda;\beta}_s = \sum_{k\geq1}\1_{\tau_{k-1}\leq s<\tau_k}\sum_{i\in V_k}\delta_{(i,Y^{i,\beta}_s)}\eqsp.
    \end{align*}
    We set $\tau_0=t$, $V_0=\Vc$, and $Y^{i,\beta}_t :=x^i$, for $i\in \Vc$. Then, given $\tau_{k-1}$ and $V_{k-1}$, define $\tau_k$ as
    \begin{align*}
        \tau_k =\inf\left\{s\in\left(\tau_{k-1},T\right]: \exists i \in V_{k-1}, ~ Q^i((\tau_{k-1},s]\times[0,C_\gamma])=1 \right\}\eqsp.
    \end{align*}
    Define $\mathcal{Y}^k$, $\mathfrak{b}^k(\mathcal{Y}^k,\beta_s)$, $\Sigma^k(\mathcal{Y}^k,\beta_s)$, and $\mathcal{W}^k$, as
    \begin{align*}
        & \mathcal{Y}^k_s := 
        \begin{pmatrix}
            Y^{i_1,\beta}_s\\
            \vdots
            \\
            Y^{i_{|V_{k-1}|},\beta}_s
        \end{pmatrix}\eqsp,
        \quad
        \mathfrak{b}^k(\mathcal{Y}^k_s,\beta_s) := 
        \begin{pmatrix}
            b\left(i_1,Y^{i_1,\beta}_s, \sum_{i\in V_{k-1}} \delta_{(i,Y^{i,\beta}_s)},\beta_s^{i_1}\right)\\
            \vdots
            \\
            b\left(i_{|V_{k-1}|},Y^{i_{|V_{k-1}|},\beta}_s, \sum_{i\in V_{k-1}} \delta_{(i,Y^{i,\beta}_s)},\beta_s^{i_{|V_{k-1}|}}\right)
        \end{pmatrix}, \\
        &\Sigma^k(\mathcal{Y}^k_s,\beta_s) :=
        \text{Diag}_{|V_{k-1}|}\begin{pmatrix}
            \sigma\left(i_1,Y^{i_1,\beta}_s, \sum_{i\in V_{k-1}} \delta_{(i,Y^{i,\beta}_s)},\beta_s^{i_1}\right)\\
            \vdots
            \\
            \sigma\left(i_{|V_{k-1}|},Y^{i_{|V_{k-1}|},\beta}_s, \sum_{i\in V_{k-1}} \delta_{(i,Y^{i,\beta}_s)},\beta_s^{i_{|V_{k-1}|}}\right)
        \end{pmatrix},\quad
        \mathcal{W}^k_s = 
        \begin{pmatrix}
        W^{i_1}_s\\
        \vdots
        \\
        W^{i_{|V_{k-1}|}}_s
        \end{pmatrix}\eqsp,
    \end{align*}
    taking values in $\R^{d|V_{k-1}|}$, $\R^{d|V_{k-1}|}$, $\R^{d|V_{k-1}|\times d^\prime|V_{k-1}|}$, and $\R^{d^\prime|V_{k-1}|}$ respectively, where 
    the matrix $\text{Diag}_m$ is a diagonal matrix of size $dm\times d^\prime m$, for $m\in\N$,
    and the indices $i_1,\dots,i_{|V_{k-1}|}\in V_{k-1}$ are taken w.r.t.\ the total order $\leq$ in $\Ic$. From Assumption H\ref{hypH:model_parameters}, together with~\eqref{eq:bound_d_1_Rd-measures-to_R_d-2}, we have that the coefficients 
    $\mathfrak{b}^k$ and $\Sigma^k$ are Lipschitz continuous in $\R^{d|V_{k-1}|}$ uniformly in the control, with Lipschitz constant that may depend on $|V_{k-1}|$. Therefore, $\mathcal{Y}^k$ is uniquely (up to indistinguishability) defined as the continuous and adapted process satisfying
    \begin{align*}
    \mathcal{Y}^k_s= \mathcal{Y}^k_{\tau_{k-1}} +\int^s_{\tau_{k-1}}\mathfrak{b}^k(\mathcal{Y}^k_u,\beta_u) du + \int^s_{\tau_{k-1}} \Sigma^k(\mathcal{Y}^k_u,\beta_u)d\mathcal{W}^k_u\eqsp, \quad \P-\text{a.s.}
    \end{align*}
    
    Describing what happens at branching events $\tau_k$, we can conclude the construction of the branching process. Given the definition of $\tau_k$, there is an (almost surely) unique label, that we denote $\hat i_k \in V_{k-1}$, such that
    \begin{align*}
    Q^{\hat i_k} \left((\tau_{k-1},\tau_{k}]\times[0,C_\gamma]\right)=1\eqsp.
    \end{align*}
    Let $\chi_k$ the $[0,C_\gamma]$-valued random variable such that $(\tau_{k},\chi_k)$ belongs to the support of $Q^{\hat i_k}$. We set $V_k$ as
    \begin{align*}
        V_k:= \begin{cases}
            V_{k-1}, &\text{ if }\chi_k\in \left[\gamma\left(\hat i_k,Y^{\hat i_k,\beta}_{\tau_{k}}, \sum_{i\in V_{k-1}} \delta_{(i,Y^{i,\beta}_{\tau_{k}})},\beta_{\tau_{k}}^{\hat i_k}\right), C_\gamma\right],\\
            V_{k-1}\backslash \left\{\hat i_k\right\},&\text{ if }\chi_k\in I_0\left(\hat i_k,Y^{\hat i_k,\beta}_{\tau_{k}}, \sum_{i\in V_{k-1}} \delta_{(i,Y^{i,\beta}_{\tau_{k}})},\beta_{\tau_{k}}^{\hat i_k}\right),\\
            V_{k-1}\backslash \left\{\hat i_k\right\}\cup \left\{\hat i_k 0,\dots,\hat i_k (\ell-1)\right\},&\text{ if }\chi_k\in I_\ell\left(\hat i_k,Y^{\hat i_k,\beta}_{\tau_{k}}, \sum_{i\in V_{k-1}} \delta_{(i,Y^{i,\beta}_{\tau_{k}})},\beta_{\tau_{k}}^{\hat i_k}\right)\text{ for }\ell\geq 1\eqsp,
        \end{cases}
    \end{align*}
    where we impose the continuity for the flow for the off-spring, \ie, $Y^{i,\beta}_{\tau_{k}} := Y^{\hat i_k,\beta}_{\tau_{k}}$, for $i\in V_{k}\backslash V_{k-1}$.
    
    We prove that this process satisfies the SDE~\eqref{SDE:strong} by induction. Since $\tau_0 = t$, it is trivially satisfied. If it holds true up to $\tau_{k-1}$, we have
    \begin{align}\label{eq:prop_non_expl:SDE_between_tau_k-tau_k_1}
        \langle\varphi,\xi^{t,\lambda;\beta}_{s\wedge \tau_k}\rangle =
        \1_{s\leq \tau_{k-1}} \langle\varphi,\xi^{t,\lambda;\beta}_{s}\rangle + \1_{\tau_{k-1}<s<\tau_{k}} \sum_{i\in V_{k-1}} \varphi \left(i,Y^{i,\beta}_{s}\right) + \1_{s\geq\tau_{k}} \sum_{i\in V_{k}} \varphi \left(i,Y^{i,\beta}_{\tau_{k}}\right)\eqsp.
    \end{align}
    The first term on the r.h.s.\ satisfies~\eqref{SDE:strong} by the induction hypothesis. We apply Itô's formula for each branch to deal with the second one. Finally, the third term is equal to
    \begin{align*}
        \sum_{i\in V_{k}}
        \varphi\left(i,Y^{i,\beta}_{\tau_{k}-}\right) 
        =& 
        \sum_{i\in V_{k-1}}
        \varphi \left(i,Y^{i,\beta}_{\tau_{k}-}\right)
        - 
        \1_{
            \chi_k\in \left[0,\gamma\left(\hat i_k,Y^{\hat i_k,\beta}_{\tau_{k}-}, \sum_{i\in V_{k-1}} \delta_{(i,Y^{i,\beta}_{\tau_{k}-})},\beta_{\tau_{k}-}^{\hat i_k}\right)\right)
        } \varphi \left(\hat i_k,Y^{\hat i_k,\beta}_{\tau_{k}-}\right)\\
        &\qquad\qquad\qquad
        +\sum_{\ell\geq 1}
        \1_{
            \chi_k\in I_\ell\left(Y^{\hat i_k,\beta}_{\tau_{k}-}, \sum_{i\in V_{k-1}} \delta_{(i,Y^{i,\beta}_{\tau_{k}-})},\beta_{\tau_{k}-}^{\hat i_k}\right)
        } \sum_{l= 0}^{\ell-1}
        \varphi \left(\hat i_k l,Y^{\hat i_k l,\beta}_{\tau_{k}-}\right)
        \eqsp,
    \end{align*}
    which coincides with the integral w.r.t. the Poisson random measures over $(\tau_{k-1},\tau_k]$. Therefore,~\eqref{SDE:strong} is satisfied up to $\tau_k$ and we conclude by induction.
    
    As previously recalled, to achieve a well-posedness of the population, the last missing ingredients are~\eqref{eq:non-explosion-moment1_mass} and~\eqref{eq:non-explosion-moment1_population}. Let $\left\{\theta_n\right\}_{n\in\N}$ be
    \begin{align*}
        \theta^1_n:=\inf\left\{s\geq t: |V_s|\geq n \right\}
        \eqsp,\quad
        \theta^2_n:=\inf\left\{s\geq t: \sum_{i\in \Vc^{t,\lambda;\beta}_u}\left|Y^{i,\beta}_u\right|\geq n \right\}
        \eqsp,\quad\text{ and }\quad
        \theta_n:=\theta^1_n\wedge \theta^2_n\eqsp.
    \end{align*}
    The first part of the proof ensures that $\xi^{t,\lambda;\beta}$ is well-posed and satisfies~\eqref{SDE:strong} up to $\theta_n$. Let us first focus on~\eqref{eq:non-explosion-moment1_mass} and apply~\eqref{SDE:strong} to the function $(i,x)\mapsto 1$, obtaining
    \begin{align*}
        |\Vc^{t,\lambda;\beta}_{s\wedge\theta_n}| =& |\Vc^{t,\lambda;\beta}_{t}|
        +
        \int_{\left(t,s\wedge \theta_n\right]\times \R_+}
        \sum_{i\in \Vc^{t,\lambda;\beta}_{u-}}
        \sum_{k\geq0}
        (k-1)
        \1_{I_k\left(i,Y^{i,\beta}_{u-},\xi^{t,\lambda;\beta}_{u-},\beta^i_{u}\right)}(z)
        Q^i(dudz)
        \eqsp,
    \end{align*}
    for $s\geq t$.
    Applying Itô's formula, we also obtain
    \begin{align*}
        |\Vc^{t,\lambda;\beta}_{s\wedge\theta_n}|^2
        =&~
        |\Vc^{t,\lambda;\beta}_{t}|^2
        +
        \int_{\left(t,s\wedge \theta_n\right]\times \R_+}
        \sum_{i\in \Vc^{t,\lambda;\beta}_{u-}}
        \sum_{k\geq0}
        \left(
            \left(|\Vc^{t,\lambda;\beta}_{u-}| + k-1\right)^2 - |\Vc^{t,\lambda;\beta}_{u-}|^2
        \right)\1_{I_k\left(i,Y^{i,\beta}_{u-},\xi^{t,\lambda;\beta}_{u-},\beta^i_{u}\right)}(z)
        Q^i(dudz)
        \\
        =&~
        |\Vc^{t,\lambda;\beta}_{t}|^2+
        \int_{\left(t,s\wedge \theta_n\right]\times \R_+}
        \sum_{i\in \Vc^{t,\lambda;\beta}_{u-}}
        \sum_{k\geq0}\left(
        2(k-1)|\Vc^{t,\lambda;\beta}_{u-}| + (k-1)^2\right) \1_{I_k\left(i,Y^{i,\beta}_{u-},\xi^{t,\lambda;\beta}_{u-},\beta^i_{u}\right)}(z)
        Q^i(dudz)\eqsp.
    \end{align*}
    Therefore, we get
    \begin{align*}
        \sup_{u\in[t,s]}|\Vc^{t,\lambda;\beta}_{u\wedge\theta_n}| \leq & |\Vc^{t,\lambda;\beta}_{t}|+ \int_{\left(t,s\wedge \theta_n\right]\times \R_+}
        \sum_{i\in \Vc^{t,\lambda;\beta}_{u-}}\sum_{k\geq 1}
        (k-1)
        \1_{I_k\left(i,Y^{i,\beta}_{u-},\xi^{t,\lambda;\beta}_{u-},\beta^i_{u}\right)}(z)
        Q^i(dudz),\\
        \sup_{u\in[t,s]}|\Vc^{t,\lambda;\beta}_{u\wedge\theta_n}|^2 \leq & |\Vc^{t,\lambda;\beta}_{t}|^2+ \int_{\left(t,s\wedge \theta_n\right]\times \R_+}\sum_{i\in \Vc^{t,\lambda;\beta}_{u-}}\sum_{k\geq 1}\left(
            2(k-1)|\Vc^{t,\lambda;\beta}_{u-}| + (k-1)^2\right)\1_{I_k\left(i,Y^{i,\beta}_{u-},\xi^{t,\lambda;\beta}_{u-},\beta^i_{u}\right)}(z)
            Q^i(dudz)\eqsp,
    \end{align*}
    and, taking the expectation,
    \begin{align*}
        \E\left[\sup_{u\in[t,s]}|V_{u\wedge\theta_n}|\right] \leq & |\Vc^{t,\lambda;\beta}_{t}|+ \E\left[\int_t^{s\wedge \theta_n}\sum_{i\in \Vc^{t,\lambda;\beta}_{u}}\gamma\left(i,Y^{i,\beta}_{u}, \xi^{t,\lambda;\beta}_u,\beta_{u}^{i}\right)\sum_{k\geq 1} (k-1) p_k\left(i,Y^{i,\beta}_{u}, \xi^{t,\lambda;\beta}_u,\beta_{u}^{i}\right)du\right]\\
        \leq & |\Vc^{t,\lambda;\beta}_{t}|+ C_\gamma C^1_\Phi\E\left[\int_t^{s\wedge \theta_n}\sup_{z\in[t,u]}|V_{z\wedge\theta_n}|\right],\\
        \E\left[\sup_{u\in[t,s]}|V_{u\wedge\theta_n}|\right] \leq & |\Vc^{t,\lambda;\beta}_{t}|+ C_\gamma(C^1_\Phi+C^2_\Phi)\E\left[\int_t^{s\wedge \theta_n}\sup_{z\in[t,u]}|\Vc^{t,\lambda;\beta}_{z\wedge\theta_n}|^2\right]
        \eqsp.
    \end{align*}
    Applying Grönwall's lemma, we obtain
    \begin{align*}
        \E\left[\sup_{u\in[t,s]}|\Vc^{t,\lambda;\beta}_{u\wedge\theta_n}|\right] \leq |\Vc^{t,\lambda;\beta}_{t}|e^{C_\gamma C^1_\Phi (s-t)}\eqsp,
        \qquad 
        \E\left[\sup_{u\in[t,s]}|\Vc^{t,\lambda;\beta}_{u\wedge\theta_n}|^2\right] \leq |\Vc^{t,\lambda;\beta}_{t}|^2 e^{C_\gamma (C^1_\Phi+C^2_\Phi)(s-t)}\eqsp.
    \end{align*}
    Since the bound is uniform in $n$, $\theta^1_n$ converges almost surely to infinity, and by Fatou's lemma, we retrieve~\eqref{eq:non-explosion-moment1_mass} and~\eqref{eq:non-explosion-moment2_mass}. This implies also~\eqref{eq:non-explosion-moment1_control}, since
    \begin{align*}
        \E\left[\int_t^s\sum_{i\in \Vc^{t,\lambda;\beta}_{u}}|\beta^i_u|du\right]
        \leq
        \E\left[\int_t^s|V_{u}|\sup_{i\in\Ic}|\beta^i_u|du\right]\leq
        \E\left[\sup_{u\in[t,s]}|V_{u}|\int_t^s\sup_{i\in\Ic}|\beta^i_u|du\right] \leq C\eqsp,
    \end{align*}
    where in the last inequality we used Cauchy--Schwartz inequality,~\eqref{eq:bound_sup_beta_2} and~\eqref{eq:non-explosion-moment2_mass}.
    
    Proving~\eqref{eq:non-explosion-moment1_population} is more subtle, as the SDE~\eqref{SDE:strong} cannot be applied directly. We see that~\eqref{eq:prop_non_expl:SDE_between_tau_k-tau_k_1} is still valid for $\varphi(i,x)=|x|$. Itô's formula yields, for $s \in(\tau_{k-1},\tau_k)$,
    \begin{align*}
        \sum_{i\in \Vc^{t,\lambda;\beta}_{k-1}}
        \left|Y^{i,\beta}_{s}\right|
        =&~
        \sum_{i\in \Vc^{t,\lambda;\beta}_{k-1}}
        \left|
            Y^{i,\beta}_{\tau_{k}} + 
            \int_{\tau_{k-1}}^s b\left(i,Y^{i,\beta}_{u}, \xi^{\beta}_{u},\beta^i_{u}\right)du +\int_{\tau_{k-1}}^s \sigma\left(i,Y^{i,\beta}_{u}, \xi^{\beta}_{u},\beta^i_{u}\right)dW^i_u
        \right|\\
        \leq& \sum_{i\in \Vc^{t,\lambda;\beta}_{k-1}}
        \left|Y^{i,\beta}_{\tau_{k}} \right| + \sum_{i\in \Vc^{t,\lambda;\beta}_{k-1}}\int_{\tau_{k-1}}^s \left|b\left(i,Y^{i,\beta}_{u}, \xi^{\beta}_{u},\beta^i_{u}\right)\right|du +
        \sum_{i\in \Vc^{t,\lambda;\beta}_{k-1}} \left|\int_{\tau_{k-1}}^s \sigma\left(i,Y^{i,\beta}_{u}, \xi^{\beta}_{u},\beta^i_{u}\right)dW^i_u \right|\\
        \leq& \sum_{i\in \Vc^{t,\lambda;\beta}_{k-1}}
        \left|Y^{i,\beta}_{\tau_{k}} \right| + C_b \int_{\tau_{k-1}}^s|V_{u}|du + C_b \sum_{i\in \Vc^{t,\lambda;\beta}_{k-1}}\int_{\tau_{k-1}}^s \left(\left|Y^{i,\beta}_{u}\right| + \left|\beta^i_{u}\right|\right)du +
        \sum_{i\in \Vc^{t,\lambda;\beta}_{k-1}}\left|\int_{\tau_{k-1}}^s \sigma\left(i,Y^{i,\beta}_{u}, \xi^{\beta}_{u},\beta^i_{u}\right)dW^i_u \right|
        \eqsp,
    \end{align*}
    where we have used the bound~\eqref{eq:bound_b_sigma_gamma} over the coefficient $b$ in the last inequality.
    Since the family of Brownian motions $\{W^i\}_{i\in\Ic}$ are indipendent from the one of Poisson measures $\{Q^i\}_{i\in\Ic}$, we have that taking the conditional expectation with respect to $\Fc_{\tau_{k-1}}$, we can apply the Burkholder--Davis--Gundy's inequalities \citep[see, \eg, Theorem 92,][]{Dellacherie:Meyer:B}.  This means that there exists a constant $C>0$ (which may change from line to line) such that
    \begin{align*}
        &\E\left[\sup_{u\in \left[\tau_{k-1}\wedge\theta_n,s\wedge\tau_k\wedge\theta_n\right]}\sum_{i\in \Vc^{t,\lambda;\beta}_{k-1}}\bigg|\int_{\tau_{k-1}\wedge\theta_n}^u \sigma\left(i,Y^{i,\beta}_{r}, \xi^{\beta}_{r},\beta^i_{r}\right)dW^i_{r} \bigg|\Bigg|\Fc_{\tau_{k-1}} \right] 
        \\
        &
        \leq C
        \E\left[\sum_{i\in \Vc^{t,\lambda;\beta}_{k-1}} \left(\int_{\tau_{k-1}\wedge\theta_n}^{s\wedge\tau_k\wedge\theta_n}\text{Tr}\left(\sigma \sigma^\top\left(i,Y^{i,\beta}_{u}, \xi^{\beta}_{u},\beta^i_{u}\right)\right) du \right)^{1/2}\Bigg|\Fc_{\tau_{k-1}} \right]\\
        & \leq C
        \E\left[\left(s\wedge\tau_k\wedge\theta_n - \tau_{k-1}\wedge\theta_n\right)|V_{k-1}|\Bigg|\Fc_{\tau_{k-1}} \right] = C
        \E\left[\int_{\tau_{k-1}\wedge\theta_n}^{s\wedge\tau_k\wedge\theta_n}|V_u|du\Bigg|\Fc_{\tau_{k-1}} \right]
        \eqsp,
    \end{align*}
    where we have used~\eqref{eq:bound_b_sigma_gamma} in the last line. Therefore, by induction, we have that there exists a constant $C>0$ (which may change from line to line) such that
    \begin{align*}
        \E\left[\sup_{u\in[t,s]}\sum_{i\in \Vc^{t,\lambda;\beta}_{u\wedge\theta_n}}\left|Y^{i,\beta}_{u\wedge\theta_n}\right|\right] \leq &
        \sum_{i\in V}|x^i|+ 
        C \left(\E\left[ \int_t^{s\wedge \theta_n} |V_{u}|du \right] +\E\left[ \int_t^{s\wedge \theta_n} \sum_{i\in \Vc^{t,\lambda;\beta}_{u}}\left|Y^{i,\beta}_{u}\right|du \right]
        +\E\left[ \int_t^{s\wedge \theta_n} \sum_{i\in \Vc^{t,\lambda;\beta}_{u}}\left|\beta^{i}_{u}\right|du \right] \right)\eqsp,
    \end{align*}
    where we have used~\eqref{eq:non-explosion-moment1_mass} and~\eqref{eq:non-explosion-moment1_control} to bound the term depending on the mass of the population. Applying Grönwall's lemma, we obtain
    \begin{align*}
    \E\left[\sup_{u\in[t,s]}\sum_{i\in \Vc^{t,\lambda;\beta}_{u\wedge\theta_n}}\left|Y^{i,\beta}_{u\wedge\theta_n}\right|\right] \leq 
    C \left(\sum_{i\in V}|x^i| +\E\left[ \int_t^{s} |V_{u}|du \right] +\E\left[ \int_t^{s} \sum_{i\in \Vc^{t,\lambda;\beta}_{u}}\left|\beta^{i}_{u}\right|du \right] \right)\eqsp.
    \end{align*}
    Since the bound is uniform in $n$, $\theta^2_n$ converges almost surely to infinity, and by Fatou's lemma, we retrieve~\eqref{eq:non-explosion-moment1_population}.

\subsection{Proof of~\Cref{Lemma:bound_moment2_population}}
\label{Appendix:Proof:bound_moment2_population}

    Fix $\big(t,\lambda=\sum_{i\in V}\delta_{(i,x^i)}\big) \in[0,T]\times E$, and $\beta\in\CtrlStandard$. Let $\left\{\theta_n\right\}_{n\in\N}$ be
    \begin{align*}
    \theta_n:=&\inf\left\{s\geq t: |\Vc^{t,\lambda;\beta}_s|\geq n \right\}\wedge \inf\left\{s\geq t: \sum_{i\in \Vc^{t,\lambda;\beta}_u}\left|Y^{i,\beta}_u\right|\geq n \right\}\eqsp.
    \end{align*}
    We have that $\xi^{t,\lambda;\beta}$ is satisfied~\eqref{SDE:strong} up to $\theta_n$. Applying~\eqref{SDE:strong} to the function $(i,x)\mapsto |x|^2$, we get
    \begin{align*}
        &\sum_{i\in \Vc^{t,\lambda;\beta}_{s\wedge\theta_n}}\left|Y^{i,\beta}_{s\wedge\theta_n}\right|^2
        \\
        =& \sum_{i\in V}|x^i|^2+ 
        \int_t^{s\wedge\theta_n}\sum_{i\in \Vc^{t,\lambda;\beta}_u} 2 \left(Y^{i,\beta}_u\right)^\top\sigma\left(i,Y^{i,\beta}_u,\xi^{t,\lambda;\beta}_u,\beta^i_u\right)dB^i_u
        +\int_t^{s\wedge\theta_n}\sum_{i\in \Vc^{t,\lambda;\beta}_u} 2 \left(Y^{i,\beta}_u\right)^\top b\left(i,Y^{i,\beta}_u,\xi^{t,\lambda;\beta}_u,\beta^i_u\right)du \\
        &
        +\int_t^{s\wedge\theta_n}\sum_{i\in \Vc^{t,\lambda;\beta}_u}\text{Tr}\left(\sigma \sigma^\top\left(i,Y^{i,\beta}_{u}, \xi^{\beta}_{u},\beta^i_{u}\right)\right) du
        + \int_{(t,s\wedge\theta_n]\times\R_+}\sum_{i\in \Vc^{t,\lambda;\beta}_{u-}}\sum_{k\geq0}
        (k-1)\left|Y^{i,\beta}_{u-}\right|^2\1_{I_k\left(i,Y^{i,\beta}_{u-},\xi^{t,\lambda;\beta}_{u-},\beta^i_{u}\right)}(z)
        Q^i(dudz)\eqsp.
    \end{align*}
    Taking the supremum in the interval $[t,s]$ and taking the expectation, we bound each term in the r.h.s. Applying Burkholder--Davis--Gundy's inequalities \citep[see, \eg, Theorem 92,][]{Dellacherie:Meyer:B} to the second term, there exists a constant $C>0$ (which may change from line to line) such that
    \begin{align*}
        &\E\left[\sup_{u\in[t,s]}\int_t^{u\wedge\theta_n}\sum_{i\in \Vc^{t,\lambda;\beta}_{r} } 2 \left(Y^{i,\beta}_{r} \right)^\top\sigma\left(i,Y^{i,\beta}_{r} ,\xi^{t,\lambda;\beta}_{r} ,\beta^i_{r} \right)dB^i_{r} \right]\\
        &~\leq C \E\left[\left(\int_t^{s\wedge\theta_n}\sum_{i\in \Vc^{t,\lambda;\beta}_u}  \left|Y^{i,\beta}_{u} \right|^2 \text{Tr}\left(\sigma \sigma^\top\left(i,Y^{i,\beta}_{u}, \xi^{\beta}_{u},\beta^i_{u}\right)\right) du \right)^{1/2}\right]\leq C \E\left[\int_t^{s\wedge\theta_n}\sum_{i\in \Vc^{t,\lambda;\beta}_u}  \left|Y^{i,\beta}_{u} \right|^2  du \right]\eqsp.
    \end{align*}
    From~\eqref{eq:bound_b_sigma_gamma} on the growth of $b$ and $\sigma$, the third and the fourth terms can be bounded as follows
    \begin{align*}
        &\E\left[\sup_{u\in[t,s]}\int_t^{u\wedge\theta_n}\sum_{i\in \Vc^{t,\lambda;\beta}_{r} }\left( 2 \left(Y^{i,\beta}_{r} \right)^\top b\left(Y^{i,\beta}_{r} ,\xi^{t,\lambda;\beta}_{r} ,\beta^i_{r} \right) + \text{Tr}\left(\sigma \sigma^\top\left(i,Y^{i,\beta}_{r}, \xi^{\beta}_{r},\beta^i_{r}\right)\right)\right)dr\right]
        \\
        &
        \leq C \E\left[\int_t^{s\wedge\theta_n}|V_u| + \sum_{i\in \Vc^{t,\lambda;\beta}_u}  \left|Y^{i,\beta}_u\right|^2 + \left|\beta^{i}_u\right|^2 du \right]\eqsp,
    \end{align*}
    using that $a^\top b \leq \frac{1}{2}\left(|a|^2 + |b|^2\right)$, for $a,b\in \R^d$. Finally, the last term gives
    \begin{align*}
        &\E\left[\sup_{u\in[t,s]}
        \int_{(t,u\wedge\theta_n]\times\R_+}\sum_{i\in \Vc^{t,\lambda;\beta}_{r-}}\sum_{k\geq0}
        (k-1)\left|Y^{i,\beta}_{r-}\right|^2\1_{I_k\left(i,Y^{i,\beta}_{r-} ,\xi^{t,\lambda;\beta}_{r-} ,\beta^i_{r}\right)}(z)
        Q^i(drdz)
        \right]
        \\
        &
        \leq \E\left[\int_t^{s\wedge\theta_n} \sum_{i\in \Vc^{t,\lambda;\beta}_{u}}\gamma\left(i,Y^{i,\beta}_u,\xi^{t,\lambda;\beta}_u,\beta^i_u\right) \sum_{k\geq1}
        (k-1)\left|Y^{i,\beta}_{u}\right|^2 p_k\left(i,Y^{i,\beta}_u,\xi^{t,\lambda;\beta}_u,\beta^i_u\right)du\right]
        \leq C \E\left[\int_t^{s\wedge\theta_n}\sum_{i\in \Vc^{t,\lambda;\beta}_u}  \left|Y^{i,\beta}_u\right|^2 du\right]\eqsp.
    \end{align*}
    Combining all the terms and using Gronwall's inequality first and Fatou's lemma then, we obtain~\eqref{eq:non-explosion-moment2_population}.

\section{Dynammic programming principle}
\label{Appendix:DPP}

We prove in this section the dynamic programming principle (DPP) for the controlled branching process $\xi^{t,\lambda;\beta}$ introduced in~\Cref{Section:strong_form}. We closely follow the presentation in \citet{claisse18-v1} and \citet{kharroubi2024stochastic}, restating only the key results needed to establish \Cref{Prop:DPP}.

\paragraph{Canonical space and representation.}
Let $\mathcal{W} := C(\mathbb{R}_+, \mathbb{R}^d)$ be the space of continuous functions from the non-negative real line to $\mathbb{R}^d$, endowed with the topology of locally uniform convergence. This topology induces a Borel $\sigma$-algebra on $\mathcal{W}$, which we denote by $\mathscr{W}$, and which coincides with the $\sigma$-algebra generated by the canonical filtration $(\mathscr{W}_s)_{s \geq 0}$, where $\mathscr{W}_s$ is the smallest $\sigma$-algebra making the evaluation maps at time $t \leq s$ measurable \citep[see, \eg, Section 1.3 of][]{SV97}.

Now, let $\mathcal{M}$ be the space of integer-valued Borel measures defined on $\mathbb{R}_+ \times [0, C_\gamma]$, which are locally finite—that is, each measure assigns finite mass to any bounded Borel subset. Equipped with the vague topology, $\mathcal{M}$ becomes a Polish space, see, \eg, Section 4.1 of \citet{book:KALLENBERG-RM} or Appendix A2 of \citet{daley2003introduction}.
Let $(\mathscr{M}_s)_{s \geq 0}$ denote the canonical filtration on $\mathcal{M}$, where each $\mathscr{M}_s$ is the smallest $\sigma$-algebra such that the mappings $\nu \mapsto \nu(C)$, for all $C \in \mathcal{B}([0,s] \times [0, C_\gamma])$, are measurable. Equivalently, $\mathscr{M}_s$ can be described as the $\sigma$-algebra generated by the mappings $\nu \mapsto \nu([0,s] \times \cdot)$. The corresponding Borel $\sigma$-algebra on $\mathcal{M}$ is denoted by $\mathscr{M}$, and satisfies $\mathscr{M} = \bigvee_{s \geq 0} \mathscr{M}_s$.


Similarly, we define the space $\mathcal{H}$, its Borel $\sigma$-algebra $\mathscr{H}$, and its filtration $\{\mathscr{H}_t\}_{t \geq 0}$ by
\begin{align*}
    \mathcal{H} := \prod_{i \in \Ic} (\mathcal{W}\times \mathcal{M}), \quad
    \mathscr{H} := \bigotimes_{i \in \Ic} \left( \mathscr{W} \otimes \mathscr{M} \right), \quad
    \mathscr{H}_s := \bigotimes_{i \in \Ic} \left( \mathscr{W}_s \otimes \mathscr{M}_s \right).
\end{align*}
As countable products of Polish spaces, each component space $\mathcal{W}\times \mathcal{M}$ and the full product space $\mathcal{H}$ are also Polish. 

We define the canonical probability space by setting
\begin{align*}
    \Omega := \mathcal{H}, \quad \mathscr{F}_s := \mathscr{H}^\P_s, \quad \Pb := \bigotimes_{i \in \mathcal{I}} (\mathbb{W} \otimes \mathbb{Q}),
\end{align*}
where $ (\mathscr{H}^\P_s)_{s \geq 0} $ denotes the usual $ \Pb $-augmentation of the filtration $ (\mathscr{H}_s)_{s \geq 0} $, and $ \mathbb{W} $ (resp. $ \mathbb{Q} $) is the Wiener measure on $ \mathcal{W} $ (resp. the distribution of a Poisson random measure on $ \mathbb{R}_+ \times [0, C_\gamma] $) with Lebesgue intensity.

Given an element $ (w^j, \nu^j)_{j \in \mathcal{I}} \in \mathcal{H} $, for any $ s \geq 0 $ and $ U \in \mathcal{B}(\mathbb{R}_+ \times [0, C_\gamma]) $, we define the coordinate mappings by
\begin{align}
\label{eq:App-DPP:def-BM-PP}
    B^i_s\left( (w^j, \nu^j)_{j \in \mathcal{I}} \right) := w^i(s), \qquad
    Q^i\left( (w^j, \nu^j)_{j \in \mathcal{I}}, U \right) := \nu^i(U).
\end{align}

The proof of the DPP relies on the ability to work within the canonical space $\mathcal{H}$, which enables a measurable construction of the controlled branching process $\xi^{t,\lambda;\beta}$. To this end, we state the following results without proof, as they are direct generalizations of those in \citet{claisse18-v1}. Once \Cref{prop:existence_strong_branching} is established, the similarity in the setting makes the adaptation straightforward.

\begin{proposition}[Proposition 3.4 of \citet{claisse18-v1}]
\label{prop:App-DPP:admissible_control}
    For a process $\beta = (\beta_i)_{i\in\Ic}$, we have that $\beta\in\CtrlStandard$  if and only if, for every $i \in \Ic$, there exists a process $\beta^{\mathcal{H},i} : \mathbb{R}_+ \times \mathcal{H} \to A$,
    which is predictable with respect to the filtration $ (\mathscr{H}_s)_{s \geq 0} $, such that for all $ s \geq 0 $ and $ \omega \in \Omega $,
    \begin{align*}
        \alpha^i_s(\omega) = \alpha^{\mathcal{H},i}_s\left( \left( B^j(\omega), Q^j(\omega) \right)_{j\in\Ic} \right)
        = \alpha^{\mathcal{H},i}_s\left( \left( B^j_{s \wedge \cdot}(\omega), Q^j([0, s) \times \cdot)(\omega) \right)_{j\in\Ic} \right).
    \end{align*}
\end{proposition}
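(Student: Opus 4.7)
The plan is to exploit the fact that the filtration $\F$ is the $\P$-completion of the raw filtration $\G$ generated by the family $\{W^i, Q^i\}_{i\in\Ic}$, which by \eqref{eq:App-DPP:def-BM-PP} coincide with the coordinate processes on the canonical space $\Omega=\Hc$. I would combine a Doob--Dynkin factorization with a monotone class argument performed at the predictable level to obtain the desired functional representation.

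First I would reduce to the raw filtration: every $\F$-predictable process is indistinguishable from a $\G$-predictable one, and the integrability \eqref{eq:bound_sup_beta_2} is preserved under $\P$-null modification. The $\G$-predictable $\sigma$-algebra is generated as a $\pi$-system by rectangles of the form $(u,s]\times F$ with $F\in\Gc_u$, together with $\{0\}\times F_0$ for $F_0\in\Gc_0$. From the definition of $\Gc_u$ as the $\sigma$-algebra generated by finitely-many-time functionals of $B^j$ and $Q^j([0,\cdot]\times C)$, the Doob--Dynkin lemma produces, for each such $F$, a set $\tilde F\in\Hc_u$ with $\1_F(\omega)=\1_{\tilde F}((B^j(\omega),Q^j(\omega))_{j\in\Ic})$. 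This yields the representation on the generating elementary predictable processes, and the lift is itself $(\Hc_s)$-predictable on the canonical space by construction.

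Next I would globalize by a functional monotone class argument. Let $\mathcal{D}$ be the family of bounded real-valued $\G$-predictable processes admitting such a factorization with $(\Hc_s)$-predictable lift. From the previous step, $\mathcal{D}$ contains indicators of the generating $\pi$-system; closedness under linear combinations and bounded monotone limits yields that $\mathcal{D}$ comprises every bounded $\G$-predictable real-valued process. Truncation followed by a passage to the limit extends the result to any $A$-valued predictable process, using that $A$ is a closed subset of a Euclidean space and that the countability of $\Ic$ allows one to perform the argument component by component and then assemble the pieces into a single jointly measurable map. The second equality in the statement is a direct consequence of $\Hc_s$-measurability, since such a functional depends only on $B^j_{s\wedge\cdot}$ and on $Q^j([0,s)\times\cdot)$. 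The converse implication is immediate by composition.

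The subtle point, which will require care, is to run the monotone class argument \emph{at the predictable level} rather than pointwise in $s$, so that the resulting $\beta^{\Hc,i}$ is jointly measurable in $(s,\omega)$ and predictable for $(\Hc_s)$; this is why one must start from the generating $\pi$-system of predictable rectangles rather than from a time-by-time factorization of each $\beta^i_s$.
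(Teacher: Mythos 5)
Your argument is correct and is essentially the standard proof of this representation result; note that the paper itself states the proposition without proof, deferring to Proposition 3.4 of \citet{claisse18-v1}, whose argument is precisely your combination of a Doob--Dynkin factorization of $\Gc_u$-measurable indicators on the generating predictable rectangles $(u,s]\times F$ with a functional monotone class argument carried out at the predictable level. Two minor remarks: the initial reduction from $\F$- to $\G$-predictability is superfluous here, since admissible controls are by definition $\G$-predictable, and passing through a $\P$-null modification would in any case only give the identity almost surely rather than for every $\omega$ as the statement requires; and in the converse direction the integrability condition \eqref{eq:bound_sup_beta_2} is not implied by the existence of a predictable lift, so membership in $\CtrlStandard$ requires it as a separate hypothesis.
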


\begin{proposition}[Proposition 3.5 of \citet{claisse18-v1}]
\label{prop:App-DPP:canonical_process}
    Fix $(t,\lambda)\in\R_+\times E$. Let $ (\widetilde{\Omega}, (\widetilde{\mathcal{F}}_s)_{s \geq 0}, \widetilde{\mathbb{P}}) $ be a filtered probability space satisfying the usual conditions equipped with $ (\widetilde{B}^i, \widetilde{Q}^i)_{i \in \mathcal{I}} $ a family of independent Brownian motions and Poisson random measure on $\R_+\times[0,C_\gamma]$ with Lebesgue intensity measure. Fix $\widetilde{\beta}$ a control on $\widetilde{\Omega}$ defined by
    \begin{align*}
        \widetilde{\beta}^i_s(\widetilde{\omega}) := \beta^{\mathcal{H},i}_s\left( \left( \widetilde{B}^j(\widetilde{\omega}), \widetilde{Q}^j(\widetilde{\omega}) \right)_{j\in\Ic} \right)\eqsp,
        \qquad \text{ for } i \in \Ic\eqsp,\eqsp s \geq 0\eqsp,\eqsp \widetilde{\omega} \in \widetilde{\Omega}\eqsp.
    \end{align*}
    
    Then, there exists a unique (up to indistinguishability) $(\widetilde{\mathcal{F}}_s)_{s \geq 0}$-adapted càdlàg process $ \widetilde{\xi}^{t,\lambda,\widetilde{\beta}} $ satisfying the same semimartingale decomposition as in~\eqref{SDE:strong}, w.r.t.\ $ (\widetilde{B}^i, \widetilde{Q}^i)_{i \in \mathcal{I}} $. Moreover, there exists a Borel-measurable map
    \begin{align*}
        F^{t,\lambda,\beta^{\mathcal{H},\cdot}} : \mathcal{H} \to \mathbb{D}([t, +\infty), E)
    \end{align*}
    such that, for any , we have
    \begin{align*}
        \widetilde{\xi}^{t,\lambda,\widetilde{\beta}} = F^{t,\lambda,\beta^{\mathcal{H},\cdot}} \left( (\widetilde{B}^i, \widetilde{Q}^i)_{i \in \mathcal{I}} \right), \qquad \widetilde{\mathbb{P}} \text{--a.s.}
    \end{align*}
\end{proposition}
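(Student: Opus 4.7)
The proof proceeds in three stages: existence and uniqueness of $\widetilde{\xi}^{t,\lambda,\widetilde{\beta}}$ on the given stochastic basis; construction of a canonical Borel functional $F^{t,\lambda,\beta^{\mathcal{H},\cdot}}$ on $\mathcal{H}$; and identification of the two objects through strong uniqueness.

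\textbf{Step 1 (existence and uniqueness).} The filtered probability space $(\widetilde{\Omega}, (\widetilde{\mathcal{F}}_s), \widetilde{\mathbb{P}})$ supports the same structural objects as the original one in~\Cref{Section:strong_form}. By~\Cref{prop:App-DPP:admissible_control}, the representation $\beta^{\mathcal{H},i}$ is $(\mathscr{H}_s)$-predictable, hence $\widetilde{\beta}^i_s(\widetilde{\omega}) := \beta^{\mathcal{H},i}_s((\widetilde{B}^j,\widetilde{Q}^j)_{j\in\Ic})$ defines a $(\widetilde{\mathcal{F}}_s)$-predictable process valued in $A^{\Ic}$, and the square-integrability bound~\eqref{eq:bound_sup_beta_2} is inherited from the original $\beta$. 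I would then repeat verbatim the inductive construction of the branching events used in the proof of~\Cref{prop:existence_strong_branching}: define $\widetilde{\tau}_0 = t$, $\widetilde{V}_0 = \Vc$, solve between consecutive branching times a classical SDE driven by $\widetilde{B}^i$ with Lipschitz coefficients, detect the next branching via the Poisson random measure $\widetilde{Q}^{\widehat{i}_k}$, and branch according to the intervals $I_k$. The moment estimates~\eqref{eq:non-explosion-moment1_mass}--\eqref{eq:non-explosion-moment1_population} rule out explosion exactly as before, yielding existence and pathwise uniqueness of an $(\widetilde{\mathcal{F}}_s)$-adapted càdlàg process $\widetilde{\xi}^{t,\lambda,\widetilde{\beta}}$ satisfying~\eqref{SDE:strong}.

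\textbf{Step 2 (canonical functional).} Run the identical inductive construction on the canonical space $\mathcal{H}$, replacing $(\widetilde{B}^i,\widetilde{Q}^i)$ by the coordinate processes $(B^i, Q^i)$ of~\eqref{eq:App-DPP:def-BM-PP}. The goal is to show that each step of the construction yields a Borel-measurable map from $\mathcal{H}$ (more precisely, from its restriction to finite time horizons) to the Skorokhod space $\D([t,\infty); E)$. Between two branching times $\tau_{k-1}$ and $\tau_k$, the solution to the Euclidean SDE for the $|V_{k-1}|$ live particles is a measurable functional of the past of the Brownian motions by standard strong-solution results (\emph{e.g.} a pathwise contraction on Picard iterates defined directly in $\mathcal{W}$). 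The branching time $\tau_k$ is the first coordinate of the first atom of $\sum_{i\in V_{k-1}} Q^i$ in $(\tau_{k-1},\infty)\times[0,C_\gamma]$, hence a stopping time that is a Borel function on $\mathcal{M}^{V_{k-1}}$. The label $\widehat{i}_k$ and the mark $\chi_k$ are Borel functions of the Poisson atom. The offspring number is then a Borel function through the threshold decomposition into the intervals $I_\ell$. Composing these measurable pieces across the $k$-th iteration preserves Borel measurability of the candidate path in $\D([t,T_N];E)$ for $T_N = N$, and non-explosion (\Cref{prop:existence_strong_branching}) guarantees that the countable concatenation produces a well-defined Borel map $F^{t,\lambda,\beta^{\mathcal{H},\cdot}}:\mathcal{H}\to \D([t,\infty);E)$ (after discarding a Borel null set where explosion or ambiguity occurs, redefining $F$ as $0$ there).

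\textbf{Step 3 (identification).} Push forward $F^{t,\lambda,\beta^{\mathcal{H},\cdot}}$ along $(\widetilde{B}^i,\widetilde{Q}^i)_{i\in\Ic}$: since the canonical construction uses only measurable operations and distributional properties of Brownian motions and Poisson random measures, the resulting process $F^{t,\lambda,\beta^{\mathcal{H},\cdot}}((\widetilde{B}^i,\widetilde{Q}^i)_{i\in\Ic})$ satisfies~\eqref{SDE:strong} with respect to $(\widetilde{B}^i,\widetilde{Q}^i)_{i\in\Ic}$ and the control $\widetilde{\beta}$. By the pathwise uniqueness established in Step 1, it coincides $\widetilde{\mathbb{P}}$-a.s.\ with $\widetilde{\xi}^{t,\lambda,\widetilde{\beta}}$.

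\textbf{Main obstacle.} The delicate point is the joint Borel measurability of $F^{t,\lambda,\beta^{\mathcal{H},\cdot}}$ with values in the Skorokhod space. The between-jumps SDE is a measurable functional of the Brownian paths by classical strong solution theory, but one must concatenate countably many such pieces along random stopping times built from the Poisson data, while controlling the time at which the configuration component $\Vc$ may become arbitrarily large. The non-explosion estimates from~\Cref{prop:existence_strong_branching} are what makes this concatenation Borel-measurable on a full-measure set, and it is there that the setting genuinely requires the uniform bounds on $\gamma$ and the moment controls on $(p_k)_k$ from H\ref{hypH:model_parameters}.
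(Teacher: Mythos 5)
Your proposal is correct and follows exactly the route the paper intends: the paper states this proposition without proof, presenting it as a direct generalization of Proposition 3.5 of \citet{claisse18-v1}, and your three steps (the pathwise inductive construction between branching events exactly as in the proof of \Cref{prop:existence_strong_branching}, Borel measurability of each concatenated piece on the canonical space $\mathcal{H}$ with non-explosion controlling the countable concatenation, and identification via pathwise uniqueness) reproduce that argument. The only cosmetic remark is that in Step 3 the representation $\widetilde{\xi}^{t,\lambda,\widetilde{\beta}} = F^{t,\lambda,\beta^{\mathcal{H},\cdot}}((\widetilde B^i,\widetilde Q^i)_{i\in\Ic})$ is immediate because the Step 1 construction is literally the same pathwise recipe that defines $F$, evaluated on the paths of $(\widetilde B^i,\widetilde Q^i)$; pathwise uniqueness is then only needed for the uniqueness claim among all adapted càdlàg solutions, not for the representation itself.
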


\paragraph{Pseudo-Markov property.}
We now introduce the \emph{pseudo-Markov property} required to establish the DPP. This property is derived using the canonical space formulation provided in \Cref{prop:App-DPP:admissible_control} and \Cref{prop:App-DPP:canonical_process}. Working within the canonical space proves especially beneficial in this setting, as it enables a rigorous and tractable framework in which filtrations, stopping times, and control processes are naturally and consistently defined. 

We consider the version of the pseudo-Markov property that is derived from the formulation in \citet{claisse18-v1}, which generalizes the original result of \citet{claisse2016pseudo} to the setting of controlled branching diffusions. This property plays a central role in rigorously handling conditional expectations with respect to filtrations at stopping times. It is a key technical tool for establishing the recursive structure that underlies the dynamic programming principle, which is fundamental to the stochastic control framework.

First, we define the concatenation in the canonical space. Fix $t\geq0$. For $(w_1,w_2) \in \mathcal{W}^2$, let $w_1 \otimes_t w_2 $ be defined by $(w_1 \otimes_t w_2)(s) := w_1(t \wedge s) + \left( w_2(s) - w_2(t) \right) \1_{s \geq t}$, for all $s \geq 0$. Similarly, for $(\nu_1, \nu_2) \in \mathcal{M}^2$, let $\nu_1 \otimes_t \nu_2 $ be $\pi_1 \otimes_t \pi_2 := \restr{{\pi_1}}{[0, t]} + \restr{{\pi_2}}{(t, +\infty)}$, for $s \geq 0$.
Fix, now, $\beta \in \CtrlStandard$. For $t \geq 0$ and $\bar{\omega} \in \Omega$, the \emph{shifted control} $\beta^{t,\bar{\omega}} $ is defined as
\begin{align*}
    \beta^{t,\bar{\omega}}_s(\omega) := \beta_s\left( \left( B^i(\bar{\omega}) \otimes_t B^i(\omega),\; Q^i(\bar{\omega}) \otimes_t Q^i(\omega) \right)_{i \in \mathcal{I}} \right)\eqsp, \qquad \text{ for } s \geq 0\eqsp,\eqsp\omega \in \Omega\eqsp.
\end{align*}
It is important to note that, for a fixed $\bar{\omega}$, the control $\beta^{t,\bar{\omega}} $ is admissible and independent of $\mathscr{F}_t$.


We can now state the pseudo-Markov property as follows.

\begin{lemma}[Lemma 3.7 of \citet{claisse18-v1}]
\label{lemma:App-DPP:pseudo_markov}
    Fix $(t, \lambda) \in \mathbb{R}_+ \times E$, $\beta\in\CtrlStandard$, and a stopping time $\tau \in \mathcal{T}_{t, +\infty}$. Then, for any Borel-measurable function $\varphi : \D([t, +\infty), E) \to \mathbb{R}_+$, it holds that
    \begin{align*}    
        \mathbb{E}\left[ \varphi\left( \xi^{t, \lambda, \beta} \right) \middle| \mathcal{F}_\tau \right](\bar{\omega})
        = \mathbb{E}\left[ \varphi\left( \xi^{\tau(\bar{\omega}),\, \xi^{t, \lambda, \beta}_{\tau \wedge \cdot}(\bar{\omega}),\, \beta^{\tau(\bar{\omega}), \bar{\omega}}} \right) \right], \qquad \P(d\bar{\omega})\text{--a.s.}\eqsp,
    \end{align*}
    with the notation
    \begin{align*}
        \xi^{
            \tau(\bar{\omega}),\eqsp
            \xi^{t, \lambda, \beta}_{\tau \wedge \cdot}(\bar{\omega}),\eqsp
            \beta^{\tau(\bar{\omega}), \bar{\omega}}
        }_s
        :=
        \xi^{t, \lambda, \beta}_s(\bar{\omega})
        \1_{s < \tau(\bar{\omega})}
        +
        \xi^{
            \tau(\bar{\omega}),\eqsp
            \xi^{t, \lambda, \beta}_{\tau}(\bar{\omega}),\eqsp
            \beta^{\tau(\bar{\omega}), \bar{\omega}}
        }_s
        \1_{s \geq \tau(\bar{\omega})}
        \eqsp.
    \end{align*}
\end{lemma}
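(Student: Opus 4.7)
The plan is to transfer the problem onto the canonical space $\mathcal{H}$ via the Borel-measurable solution map $F^{t,\lambda,\beta^{\mathcal{H},\cdot}}$ supplied by \Cref{prop:App-DPP:canonical_process}, and then exploit the independent-increments structure of the driving noise $(B^i,Q^i)_{i\in\Ic}$ to reduce the identity to a Fubini-type computation. A monotone-class argument lets us assume without loss of generality that $\varphi$ depends only on the restriction of the trajectory to a bounded interval $[t,T]$.

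First, I would factor the driving noise at the stopping time $\tau$. For each $i\in\Ic$, write
\begin{align*}
    B^i = B^i_{\tau\wedge\cdot}\otimes_\tau B^i\eqsp,
    \qquad
    Q^i = \restr{Q^i}{[0,\tau]}\otimes_\tau \restr{Q^i}{(\tau,\infty)}\eqsp,
\end{align*}
and observe that, by the strong Markov property of Brownian motion and the independent-increments property of the Poisson random measures (together with the stopping-time theorem for $(\mathscr{F}_s)_{s\geq 0}$), the families $\bigl(B^i_{\tau+\cdot}-B^i_\tau,\,\restr{Q^i}{(\tau,\infty)}(\tau+\cdot,\cdot)\bigr)_{i\in\Ic}$ are independent of $\mathscr{F}_\tau$ and distributed under $\P$ exactly as $(B^i,Q^i)_{i\in\Ic}$. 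This is the one non-trivial input and is standard once expressed on the canonical space.

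Next, I would rewrite both sides of the identity through $F^{t,\lambda,\beta^{\mathcal{H},\cdot}}$. The l.h.s.\ equals $\E[\varphi\circ F^{t,\lambda,\beta^{\mathcal{H},\cdot}}((B^i,Q^i)_i)\mid \mathscr{F}_\tau]$. Using the factorization above, this conditional expectation is obtained by freezing the past noise at $\bar\omega$ and integrating only against the future noise. On the r.h.s., the shifted control $\beta^{\tau(\bar\omega),\bar\omega}$ is, for $\P$-a.e.\ $\bar\omega$, an admissible control on $(\Omega,\mathscr{F},\P)$ which is independent of $\mathscr{F}_\tau$ when evaluated along the independent future noise; by \Cref{prop:App-DPP:canonical_process} applied with starting time $\tau(\bar\omega)$ and initial datum $\xi^{t,\lambda,\beta}_\tau(\bar\omega)$, the process inside the expectation admits the representation
\begin{align*}
    F^{\tau(\bar\omega),\eqsp \xi^{t,\lambda,\beta}_\tau(\bar\omega),\eqsp (\beta^{\tau(\bar\omega),\bar\omega})^{\mathcal{H},\cdot}}\bigl((B^i,Q^i)_i\bigr)\eqsp.
\end{align*}
Pathwise uniqueness of the SDE \eqref{SDE:strong}, established in \Cref{prop:existence_strong_branching}, guarantees that this representation coincides with $F^{t,\lambda,\beta^{\mathcal{H},\cdot}}$ applied to the glued noise $(B^i(\bar\omega)\otimes_\tau B^i,\,Q^i(\bar\omega)\otimes_\tau Q^i)_i$ on the event $\{\tau=\tau(\bar\omega)\}$, so that both sides become the integral of the same functional against the same (product) measure.

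Finally, I would conclude by applying Fubini to integrate first over the future noise and then over $\bar\omega$, and by using the characterization of $\P(\cdot\mid\mathscr{F}_\tau)$ as the regular conditional distribution generated by the frozen past. The main obstacle I anticipate lies in the measurability bookkeeping: one must verify that $(\bar\omega,\omega)\mapsto \beta^{\tau(\bar\omega),\bar\omega}_s(\omega)$ is jointly Borel and $\mathscr{H}$-predictable in $(s,\omega)$ for a.e.\ fixed $\bar\omega$, and that the map $(\bar\omega,\omega)\mapsto \xi^{\tau(\bar\omega),\xi^{t,\lambda,\beta}_\tau(\bar\omega),\beta^{\tau(\bar\omega),\bar\omega}}(\omega)$ is product-measurable; both follow from \Cref{prop:App-DPP:admissible_control}--\Cref{prop:App-DPP:canonical_process} and the measurability of the concatenation operators $\otimes_t$, but they require some care to state cleanly. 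Once these technicalities are in place, the identity follows by a direct Fubini computation, exactly as in Lemma 3.7 of \citet{claisse18-v1}.
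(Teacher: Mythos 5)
Your sketch reproduces the standard Claisse--Tan argument (canonical-space representation via $F^{t,\lambda,\beta^{\mathcal{H},\cdot}}$, factorization of the noise at $\tau$, independence of the shifted increments from $\mathscr{F}_\tau$, identification of the restarted solution with the glued-noise solution by pathwise uniqueness, and a Fubini/regular-conditional-distribution computation), which is exactly the approach the paper relies on: the paper does not reprove the lemma but cites the proof of Lemma 3.7 in \citet{claisse18-v1} and observes that it carries over verbatim because it depends only on the decomposability of the driving Brownian motions and Poisson random measures. Your proposal is therefore consistent with the paper's (delegated) proof; the measurability and admissibility caveats you flag for the shifted control are precisely the points handled in the cited reference via \Cref{prop:App-DPP:admissible_control} and \Cref{prop:App-DPP:canonical_process}.
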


The proof of this pseudo-Markov property remains unchanged from that given in \citet{claisse18-v1}, even in the present setting involving fully interacting particle systems like ours. This robustness stems from the fact that the result relies solely on the structure and decomposability of the underlying sources of randomness—namely, the family of Brownian motions and Poisson random measures—rather than on the specifics of the interaction mechanisms between particles.

This property has two key implications. First, it provides a \emph{conditioning principle} analogous to the classical tower property for conditional expectations, adapted to the controlled setting. Second, it allows us to restrict the optimization problem to controls that are \emph{independent of the past}, without any loss of generality. This simplification is formalized in the following corollary.

\begin{corollary}[Proposition 5.2
    of \citet{claisse18-v1}]
\label{corollary:App-DPP:pseudo_markov}
    Fix $(t, \lambda) \in \mathbb{R}_+ \times E$, $\beta\in\CtrlStandard$, and a stopping time $\tau \in \mathcal{T}_{t, T}$. Then, it holds that
    \begin{align*}
        J(t,\lambda;\beta) =&~ 
        \int_\Omega
        \left(
            \int_t^{\tau(\omega)} 
            \sum_{i\in \Vc^{t,\lambda;\beta}_s(\omega)}
            \psi\left(
                i,Y^{i,\beta}_s(\omega),\xi^{t,\lambda;\beta}_s(\omega),(\beta^{\tau(\omega), \omega})^i_s
            \right)ds + J\left(
                \tau(\omega),\xi^{t,\lambda;\beta}_\tau(\omega);
                \beta^{\tau(\omega), \omega}
            \right)
        \right)\P(d\omega)
        \eqsp.
    \end{align*}
\end{corollary}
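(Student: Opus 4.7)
The plan is to split the cost functional at the stopping time $\tau$ and then invoke~\Cref{lemma:App-DPP:pseudo_markov} on the post-$\tau$ piece. By Fubini's theorem, I would write
\begin{align*}
    J(t,\lambda;\beta) = \E\left[\int_t^\tau \sum_{i\in \Vc^{t,\lambda;\beta}_s} \psi\left(i,Y^{i,\beta}_s,\xi^{t,\lambda;\beta}_s,\beta^i_s\right)ds\right] + \E\left[\int_\tau^T \sum_{i\in \Vc^{t,\lambda;\beta}_s} \psi\left(i,Y^{i,\beta}_s,\xi^{t,\lambda;\beta}_s,\beta^i_s\right)ds + \Psi\left(\xi^{t,\lambda;\beta}_T\right)\right]\eqsp.
\end{align*}

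For the pre-$\tau$ piece, the key observation is that $(\beta^{\tau(\bar\omega),\bar\omega})^i_s(\omega)=\beta^i_s(\bar\omega)$ for every $\omega\in\Omega$ and every $s\leq \tau(\bar\omega)$. Indeed, \Cref{prop:App-DPP:admissible_control} furnishes an $\Hc_s$-predictable $\beta^{\Hc,i}$ satisfying $\beta^i_s(\omega)=\beta^{\Hc,i}_s\bigl((B^j(\omega),Q^j(\omega))_j\bigr)$; since predictability entails dependence only on the restriction of the driving paths to $[0,s]$, and since the concatenated paths $B^j(\bar\omega)\otimes_{\tau(\bar\omega)}B^j(\omega)$ (resp. $Q^j(\bar\omega)\otimes_{\tau(\bar\omega)}Q^j(\omega)$) coincide with $B^j(\bar\omega)$ (resp. $Q^j(\bar\omega)$) on $[0,s]$ whenever $s\leq\tau(\bar\omega)$, the claimed identity follows. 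Substituting it into the pre-$\tau$ expectation reproduces the time integral on the right-hand side of the statement.

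For the post-$\tau$ piece, I would condition on $\Fc_\tau$ and apply~\Cref{lemma:App-DPP:pseudo_markov}. To cast the integrand as a Borel function of the driving noises, I would use the deterministic representation $\xi^{t,\lambda;\beta}=F^{t,\lambda,\beta^{\Hc,\cdot}}\bigl((B^j,Q^j)_j\bigr)$ provided by~\Cref{prop:App-DPP:canonical_process}; this turns $\varphi:(w^j,\nu^j)_j\mapsto\int_\tau^T\sum_i\psi\,ds+\Psi(\xi_T)$ into a Borel map on $\Hc$. The pseudo-Markov property then gives, pointwise in $\bar\omega$, that $\E[\varphi\mid\Fc_\tau](\bar\omega)$ equals the expectation of the same integrand evaluated along the path restarted at $(\tau(\bar\omega),\xi^{t,\lambda;\beta}_\tau(\bar\omega))$ under the shifted control $\bar\beta:=\beta^{\tau(\bar\omega),\bar\omega}$, which is admissible by the remark following the definition of shifted controls. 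This expression is, by definition, $J\bigl(\tau(\bar\omega),\xi^{t,\lambda;\beta}_\tau(\bar\omega);\beta^{\tau(\bar\omega),\bar\omega}\bigr)$; taking total expectation and combining with the pre-$\tau$ piece yields the claim.

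The principal obstacle is the measurable encoding of the cost as a single Borel functional on the canonical space so that~\Cref{lemma:App-DPP:pseudo_markov}, which is stated for a fixed $\varphi$ independent of $\omega$, genuinely applies to an integrand depending on $\beta$. This is resolved by combining the canonical representations of~\Cref{prop:App-DPP:admissible_control} and~\Cref{prop:App-DPP:canonical_process}: both $\beta$ and $\xi^{t,\lambda;\beta}$ are realized as measurable deterministic functionals of $(B^j,Q^j)_{j\in\Ic}$, and the pointwise transfer afforded by the pseudo-Markov property then applies to the resulting Borel function.
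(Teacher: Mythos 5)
The paper does not prove this corollary itself: it is imported verbatim as Proposition~5.2 of \citet{claisse18-v1}, so there is no in-paper argument to compare against. Your proposal reconstructs the standard proof of that result, and the two main ingredients are the right ones: the splitting of $J$ at $\tau$, the identification $(\beta^{\tau(\bar\omega),\bar\omega})^i_s(\omega)=\beta^i_s(\bar\omega)$ for $s\leq\tau(\bar\omega)$ via the predictable canonical representation of \Cref{prop:App-DPP:admissible_control} (which is exactly what makes the pre-$\tau$ term on the right-hand side coincide with the pre-$\tau$ part of $J$), and the transfer of the post-$\tau$ cost by the pseudo-Markov property.

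Two technical points deserve more care than you give them. First, \Cref{lemma:App-DPP:pseudo_markov} is stated for a Borel $\varphi$ defined on $\D([t,+\infty),E)$, i.e.\ a functional of the path of $\xi$ alone, whereas the running cost $\int_\tau^T\sum_i\psi(i,Y^i_s,\xi_s,\beta^i_s)\,ds$ also depends on the control. You correctly note that the fix is to regard the whole integrand as a Borel functional of the driving noises via \Cref{prop:App-DPP:admissible_control} and \Cref{prop:App-DPP:canonical_process}; but then what you are invoking is not the lemma as literally stated, rather its proof (the concatenation identity on the canonical space $\Hc$), restated for functionals of $(B^j,Q^j)_j$. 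This is the standard argument and it is sound, but it should be said explicitly that one uses the noise-level version of the pseudo-Markov identity, since otherwise the claimed "pointwise transfer" does not follow from the quoted statement. Second, the lemma takes $\varphi$ valued in $\R_+$, while $\psi$ is signed under H\ref{hypH:coercivity_hyp} (only bounded below by $-C_\Psi(1+|x|)+c_\psi|a|^2$). You need to apply the identity separately to the positive and negative parts and check that the negative part is integrable, which follows from \eqref{eq:non-explosion-moment1_population}, \Cref{Lemma:bound_moment2_population}, and \eqref{eq:bound_sup_beta_2}; without this the decomposition of $J$ into the two expectations, and the subsequent use of the tower property, is not justified when $J$ is only known to be well defined in $(-\infty,+\infty]$. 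Neither point invalidates the approach; both are routine but should be recorded.
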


\paragraph{Measurable selection.} One last ingredient needed to establish the DPP is a measurable selection principle, which allows us to select an $\varepsilon$-optimal control from the set of admissible controls. This is crucial for proving the existence of optimal controls and establishing the recursive structure of the value function.

We endow $\CtrlStandard$ with the Borel $\sigma$-algebra related to the distance $d_{\CtrlStandard,\Ic}$ defined by
\begin{align*}
    d_{\CtrlStandard,\Ic}(\beta_1, \beta_2) := \sum_{i \in \Ic} 2^{-|i|} \E\left[\int_{0}^{T}
        |\beta^i_1(s) - \beta^i_2(s)|ds
    \right]\eqsp.
\end{align*}

First, we retrieve a stability result for the branching system.

\begin{proposition}[Proposition 2.2 of \citet{kharroubi2024stochastic}]
\label{prop:App-DPP:stability}
    Suppose that Assumption H\ref{hypH:model_parameters} holds. Fix $(t,\lambda) \in \mathbb{R}_+ \times E$. Let $(t_n)_{n \geq 1} \subset \mathbb{R}_+$, $(\lambda_n)_{n \geq 1} \subset E$, and $(\beta_n)_{n \geq 1} \subset \CtrlStandard$ be sequences such that $(t_n, \lambda_n)$ converges to $(t, \lambda)$ as $n\to\infty$ and
    \begin{align*}
        \E\left[
            \int_{0}^{T}|
                \beta^i_{n,s} - \beta_s^i|ds
        \right]\xrightarrow[n\to\infty]{} 0\eqsp,
    \end{align*}
    for $i \in \Ic$. Then, the following convergence holds:
    \begin{align}
    \label{eq:stability-Ys}
        \E\left[
            \int_{0}^{T}
            \left| Y^{t_n,\lambda_n;\eqsp\beta_n,\eqsp i}_s \eqsp\1_{\Vc_s^{t_n,\lambda_n;\eqsp\beta_n}} - Y^{t,\lambda;\eqsp\beta,\eqsp i}_s \eqsp\1_{\Vc_s^{t,\lambda;\eqsp\beta}}\right|^2
        ds
        \right] \xrightarrow[n\to\infty]{} 0\eqsp,
        \qquad\text{ for }s \in [t,T],\eqsp i \in \Ic\eqsp.
    \end{align}
\end{proposition}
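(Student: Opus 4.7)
The plan is to adapt the pathwise induction of~\Cref{prop:existence_strong_branching} and to couple the two systems along their successive branching events. Since both $\xi^{t,\lambda;\beta}$ and $\xi^{t_n,\lambda_n;\beta_n}$ are built from the same Brownian motions $\{W^i\}_{i\in\Ic}$ and Poisson random measures $\{Q^i\}_{i\in\Ic}$, they can be compared sample-path by sample-path. Because $\Ic$ carries the discrete topology, the convergence $\lambda_n\to\lambda$ in $E$ forces, for $n$ large enough, the equality of the initial label sets and the convergence of the initial positions in $\R^{d|\Vc|}$, thus providing the base case for an induction on the ordered branching times $(\tau_k^n)_{k\geq 0}$ and $(\tau_k)_{k\geq 0}$.

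Between two consecutive branching events, both systems evolve as finite-dimensional It\^o diffusions on $\R^{d|\Vc|}$ with Lipschitz coefficients in the sense of Assumption~H\ref{hypH:model_parameters}(i). Applying It\^o's formula to the squared Euclidean distance between the stacked position vectors, together with Burkholder--Davis--Gundy's inequality, the Lipschitz bound~\eqref{eq:bound_b_sigma_Lipschitz} (combined with~\eqref{eq:bound_d_1_Rd-measures-to_R_d} to pass from the Wasserstein distance between empirical measures to the Euclidean distance between position vectors), the hypothesis $\E[\int_0^T |\beta_{n,s}^i-\beta_s^i|ds]\to 0$, and Gronwall's lemma, one obtains $L^2$-convergence of the spatial components up to the first branching event. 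At a branching event, the outcome is determined by the location of a Poisson atom relative to the partition $(I_k(i,x,\lambda,a))_{k\in\N}$; since $\gamma$ and the $(p_k)_k$ are continuous, the endpoints of the $I_k$'s converge with the state and the control, and because the intensity $ds\eqsp dz$ is absolutely continuous, the $Q^i$ almost surely do not load the boundaries of these intervals. Consequently, $\tau_1^n\to\tau_1$ in probability and the offspring configurations coincide for $n$ large, so the coupling extends past the first branching event.

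Iterating the argument across successive branching events, and truncating at the stopping time $\tau_K:=\inf\{s\geq t\,:\,|\Vc_s^{t,\lambda;\beta}|\vee |\Vc_s^{t_n,\lambda_n;\beta_n}|\geq K\}$, one controls the residual contribution of $\{\tau_K\leq T\}$ through the second moment bound~\eqref{eq:non-explosion-moment2_mass}, and concludes by sending first $n\to\infty$ and then $K\to\infty$. The principal obstacle lies precisely in the branching-event coupling: the thresholds defining the $I_k$'s depend continuously but nontrivially on state and control, and one must ensure that, with high probability, the Poisson atoms fall in the interior of the same interval for both systems simultaneously. Once this coupling is secured, the finiteness of the number of branching events before $T$ guaranteed by~\Cref{prop:existence_strong_branching} allows the induction to terminate after finitely many steps, yielding the desired $L^2$-convergence~\eqref{eq:stability-Ys}.
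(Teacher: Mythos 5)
The paper does not actually prove this statement: it is imported verbatim as Proposition~2.2 of \citet{kharroubi2024stochastic}, so there is no in-paper argument to compare against. Your coupling-plus-induction strategy (same driving noises, induction over branching events, diffusive Gronwall estimate between events, truncation at $\tau_K$ and the moment bounds of \Cref{prop:existence_strong_branching} to pass to the limit) is the natural route and is consistent with how \Cref{prop:existence_strong_branching} itself is proved, so the overall architecture is sound. However, two steps in your sketch have genuine gaps as written.

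First, the Gronwall step. After applying It\^o's formula to $\|\Vec{Y}^n-\Vec{Y}\|^2$ you must bound $|b(i,Y_s,\xi_s,\beta^i_{n,s})-b(i,Y_s,\xi_s,\beta^i_s)|$ (and the analogous term for $\sigma$). Assumption~H\ref{hypH:model_parameters}(i) gives Lipschitz continuity only in $(x,\lambda)$, \emph{uniformly} in $a$, and H\ref{hypH:model_parameters}(ii) gives only linear growth in $a$; no modulus of continuity in the control variable is assumed, so the hypothesis $\E[\int_0^T|\beta^i_{n,s}-\beta^i_s|\,ds]\to 0$ does not by itself make this term vanish. Even granting Lipschitz continuity in $a$ (which the cited reference assumes), the quadratic estimate produces a cross term $|Y^n-Y|\cdot|\beta_n-\beta|$ whose control requires either $L^2(ds\,d\P)$ convergence of the controls or a Cauchy--Schwarz/uniform-integrability argument based on \eqref{eq:bound_sup_beta_2}; you should say which. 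Second, the branching-event coupling. The offspring count at a jump time $u$ of $Q^i$ is determined by which interval $I_k(i,Y^{i}_{u-},\xi_{u-},\beta^i_u)$ contains the mark $z$, and this depends on the value of the control \emph{at} $u$. Convergence $\beta_n\to\beta$ in $L^1(ds\otimes d\P)$ only gives $ds\otimes d\P$-a.e.\ convergence along a subsequence, and the jump times form a Lebesgue-null random set, so continuity of $\gamma$ and $(p_k)_k$ in $a$ is not enough to conclude that the interval endpoints converge at those times. This can be repaired — the exceptional set is predictable and the compensator of $Q^i$ is $ds\,dz$, so a.s.\ no atom of $Q^i$ has its time coordinate in a $ds\otimes d\P$-null predictable set — but that argument is missing from your sketch, and without it the claim that "the offspring configurations coincide for $n$ large" does not follow.
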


Fix $\varepsilon>0$. Let $\mathcal{U}_\varepsilon$ denote the set of all $\varepsilon$-optimal controls associated with a given initial condition, \ie,
\begin{align*}
    \mathcal{U}_\varepsilon(t,\lambda) := \left\{ \beta \in \CtrlStandard^t
    \eqsp:\eqsp
    J(t,\lambda;\beta) \leq v(t,\lambda) + \varepsilon
    \right\}\eqsp,
    \qquad \text{for } (t,\lambda) \in \mathbb{R}_+ \times E\eqsp,
\end{align*}
with $\CtrlStandard^t$ the collection of admissible controls independent of $\mathscr{F}_t$.

We aim to exhibit a function that associates to each $(t,\lambda)$ a control $\beta\in\mathcal{U}(t,\lambda)$ in a measurable way. To this purpose, we follow the outline of the proof of Lemma 3.1 in \citet{kharroubi2024stochastic}.

\begin{proposition}[Measurable selection]
\label{prop:App-DPP:measurable_selection}
    Suppose that Assumption H\ref{hypH:model_parameters} holds. Fix $\varepsilon > 0$. Then, for each $\nu\in\Pc([0,T]\times E)$, there exists a Borel-measurable function
    \begin{align*}
        \phi_\nu : \big([0,T] \times E,\eqsp\Bc([0,T]) \otimes \Bc(E)\big) \to \big(\CtrlStandard,\eqsp\Bc(\CtrlStandard)\big)
    \end{align*}
    such that $\phi_\nu(t,\lambda)\in\mathcal{U}_\varepsilon(t,\lambda)$, for $(t,\lambda) \in [0,T] \times E$.
\end{proposition}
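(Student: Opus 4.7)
The plan is to realise $\phi_\nu$ as a measurable selector from the set-valued map $\Gamma : [0,T]\times E \twoheadrightarrow \CtrlStandard$, $\Gamma(t,\lambda) := \mathcal{U}_\varepsilon(t,\lambda)$. By the very definition of $v$ as an infimum, each fiber $\Gamma(t,\lambda)$ is nonempty, and $(\CtrlStandard,d_{\CtrlStandard,\Ic})$ is a separable metric space, well-suited to classical selection results. I would verify that $\mathrm{graph}(\Gamma)$ is Borel in $[0,T]\times E\times \CtrlStandard$ and apply a Jankov--von Neumann type measurable selection theorem to obtain a universally measurable selector. Completing with respect to the image measure $\nu$ and redefining $\phi_\nu$ arbitrarily (\eg\ by a constant control) on the resulting $\nu$-negligible exceptional set then yields the Borel-measurable map used in the DPP, the statement being understood modulo such a $\nu$-null set (indeed exploited through the set $N^\nu$ in the proof of \Cref{Prop:DPP}).

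The analytic core of the argument is the joint continuity of the cost
\begin{align*}
    [0,T]\times E \times \CtrlStandard \ni (t,\lambda,\beta) \mapsto J(t,\lambda;\beta) \in \R\eqsp,
\end{align*}
which, together with the continuity of $v$ proved in \Cref{prop:continuity-value-fct}, makes $\mathrm{graph}(\Gamma)=\{J-v\leq\varepsilon\}$ closed, hence Borel. To establish this continuity I would follow Step~1 of \Cref{prop:continuity-value-fct}: for $(t_n,\lambda_n,\beta_n)\to(t,\lambda,\beta)$, the stability result of \Cref{prop:App-DPP:stability} yields, for each $i\in\Ic$, the $L^2$-convergence of the restricted trajectories $Y^{t_n,\lambda_n;\eqsp\beta_n,\eqsp i}\eqsp\1_{i\in\Vc^{t_n,\lambda_n;\eqsp\beta_n}_\cdot}$ to $Y^{t,\lambda;\eqsp\beta,\eqsp i}\eqsp\1_{i\in\Vc^{t,\lambda;\eqsp\beta}_\cdot}$. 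Combined with the continuity and polynomial growth of $\psi$ and $\Psi$ (Assumption H\ref{hypH:coercivity_hyp}) and the uniform moment estimates of \Cref{prop:existence_strong_branching}--\Cref{Lemma:bound_moment2_population}, this permits a dominated-convergence passage to the limit inside the expectation defining $J$.

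I expect the main obstacle to lie in the measure-theoretic handling of the countable label set $\Ic$: the metric $d_{\CtrlStandard,\Ic}$ weights each branch $i$ by $2^{-|i|}$, so convergence in $\CtrlStandard$ delivers only $L^1(ds\otimes d\P)$-convergence of $\beta^i_n \to \beta^i$ for each fixed $i$, whereas $J$ aggregates the costs along the random, $\beta$-dependent label set $\Vc^{\cdot}$. Lifting the branchwise convergence to convergence of the aggregated cost requires a dominating random variable that is summable across $\Vc^{\cdot}$; this is precisely what the second-moment bounds \eqref{eq:non-explosion-moment2_mass}--\eqref{eq:non-explosion-moment2_population}, the quadratic coercivity \eqref{eq:coercivity_hyp:psi} of $\psi$ in $a$, and the uniform bound \eqref{eq:uniform-bound-beta} satisfied on $\mathcal{U}_\varepsilon$ provide. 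Once continuity is secured, the selection step is essentially book-keeping and follows the pattern of Lemma~3.1 in \citet{kharroubi2024stochastic}.
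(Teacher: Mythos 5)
Your proposal follows essentially the same route as the paper's proof: joint continuity of $(t,\lambda,\beta)\mapsto J(t,\lambda;\beta)$ via \Cref{prop:App-DPP:stability} together with the growth and moment bounds makes the graph of the $\varepsilon$-optimal correspondence closed, hence Borel, after which a Jankov--von Neumann/Bertsekas--Shreve selection yields a universally measurable selector that is upgraded to a Borel one modulo a $\nu$-null set, exactly as in the paper. The one point you gloss over is that $\mathcal{U}_\varepsilon(t,\lambda)$ also imposes $\beta\in\CtrlStandard^t$ (independence of $\mathscr{F}_t$), so the relevant graph is $\{J-v\le\varepsilon\}\cap\bar{\mathscr{C}}$ rather than $\{J-v\le\varepsilon\}$, and the Borel measurability of the independence-constraint set $\bar{\mathscr{C}}$ requires the separate (routine) argument the paper borrows from Lemma~3.1 of \citet{kharroubi2024stochastic}.
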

\begin{proof}
    As noted in the proof of Lemma 3.1 of \citet{kharroubi2024stochastic}, first, $\CtrlStandard$ equippend with $\Bc(\CtrlStandard)$ forms a Borel space, generalizing Theorem 13.6 and 4.28 of \citet{aliprantis2006infinite}. Moreover, as a consequence of Theorem 3.4.1 and Theorem 3.4.5 in \citet{cohn2013measure} and Theorem 4.13 of \citet{brezis2011functional}, the set of predictable processes valued in $A$ is also separable for the $L^1$ distance. Let now $\mathscr{C}_\varepsilon$ (resp. $\bar{\mathscr{C}}$) defined by
    \begin{align*}
        \mathscr{C}_\varepsilon:= \left\{ (t,\lambda,\beta) \in [0,T] \times E\times \CtrlStandard : \beta\in\bar{\mathcal{U}}_\varepsilon(t,\lambda) \right\}
        \qquad
        \text{(resp. }
        \bar{\mathscr{C}}:= \left\{ (t,\lambda,\beta) \in [0,T] \times E\times \CtrlStandard : \beta\in\CtrlStandard_t \right\}
        \text{ )}
        \eqsp,
    \end{align*}
    with
    \begin{align*}
        \bar{\mathcal{U}}_\varepsilon(t,\lambda) := \left\{ \beta \in \CtrlStandard
        \eqsp:\eqsp
        J(t,\lambda;\beta) \leq v(t,\lambda) + \varepsilon
        \right\}\eqsp,
        \qquad \text{ for } (t,\lambda) \in \mathbb{R}_+ \times E
        \eqsp.
    \end{align*}
    As a consequence of the continuity of $\psi$ and $\Psi$, we have that \cref{prop:App-DPP:stability} the set $\mathscr{C}_\varepsilon$ is closed and a fortiori a Borel subset of $[0,T] \times E\times \CtrlStandard$. Arguing like in Lemma 3.1 of \citet{kharroubi2024stochastic}, $\bar{\mathscr{C}}$ is Borel, thus $\mathscr{C}_\varepsilon\cap \bar{\mathscr{C}}$ is a Borel. Therefore, Proposition 7.36 and Propositions 7.49 of \citet{bertsekas1996stochastic} imply there exists an analytically measurable function $\phi: [0,T]\times E\to\CtrlStandard$ such that $(t, \lambda, \phi(t,\lambda))\in \mathscr{C}_\varepsilon\cap \bar{\mathscr{C}}$ for all $(t,\lambda)\in[0,T]\times E$. Fix $\nu\in\Pc([0,T]\times E)$ and $\Bc_\nu([0,T]\times E)$ the completion of the Borel $\sigma$-algebra $\Bc([0,T]\times E)$ under $\nu$. Applying Corollary 7.42.1 of \citet{bertsekas1996stochastic}, $\phi$ is universally measurable, yielding the existence of a Borel measurable map $\phi_\nu$ such that $\phi_\nu(t,\lambda)\in \bar{\mathcal{U}}_\varepsilon(t,\lambda)$ for $\nu$--almost every $(t,\lambda)\in[0,T]\times E$.
\end{proof}


\section{Verification Theorem}
\label{Appendix:verification-thm}
\begin{proof}[Proof of~\Cref{Prop:verification2}]
    By the local submartingale property in condition \textit{(ii)}, there exists a nondecreasing sequence of stopping times $(\tau_n)_n$ such that $\tau_n\uparrow T$ $\P$--a.s. and 
    \begin{align}
    \label{eq:verification_thm2:lower_bound}
        \E\left[
            w\left(
                T\wedge \tau_n, \xi^{\bar{t},\bar\lambda;\beta}_{T\wedge \tau_n}
            \right)
            +
            \int_{\bar t}^{T\wedge \tau_n} \sum_{i\in \Vc^{\bar{t},\bar\lambda;\beta}_u}
            \psi\left(i,Y^{i,\beta}_u,\xi^{\bar{t},\bar\lambda;\beta}_u,\beta^i_u\right)du
        \right]
        \geq 
        w(\bar{t},\bar \lambda)\eqsp, \quad \text{ for }\beta \in \CtrlStandard\eqsp.
    \end{align}
    Fix now $\eps>0$. Let $\beta \in \CtrlStandard^{\eps}_{(\bar t,\bar \lambda)}$, as defined in \Cref{prop:bound_eps_opt_control}. From~\eqref{eq:verification_thm2:growth_w} and~\eqref{eq:coercivity_hyp:psi}, we see that for all $n$ and $\beta \in \CtrlStandard^{\eps}_{(\bar t,\bar \lambda)}$, the l.h.s.\ is integrable and bounded by an integrable quantity. Applying dominated convergence theorem, by sending $n$ to infinity into~\eqref{eq:verification_thm2:lower_bound}, we get
    \begin{align*}
        w(\bar{t},\bar \lambda)
        \leq&\eqsp
        \E\left[w\left(
            T,\xi^{\bar{t},\bar\lambda;\beta}_{T}
        \right)+\int_{\bar{t}}^{T} \sum_{i\in \Vc^{\bar{t},\bar\lambda;\beta}_u}\psi\left(i,Y^{i,\beta}_u,\xi^{\bar{t},\bar\lambda;\beta}_u,\beta^i_u\right)du\right]
        \\
        = &\eqsp
        \E\left[\Psi\left(\xi^{\bar{t},\bar\lambda;\beta}_{T}\right)+\int_{\bar{t}}^{T} \sum_{i\in \Vc^{\bar{t},\bar\lambda;\beta}_u}\psi\left(i,Y^{i,\beta}_u,\xi^{\bar{t},\bar\lambda;\beta}_u,\beta^i_u\right)du\right]=J(\bar{t},\bar \lambda;\beta)\eqsp,
    \end{align*}
    using the terminal condition \textit{(i)} and~\eqref{eq:def:cost_function}. 
    Since $\beta$ is arbitrary in $\CtrlStandard^{\eps}_{(\bar t,\bar \lambda)}$, this shows that $v(\bar{t},\bar\lambda)\geq w(\bar{t},\bar\lambda)$. Moreover, we retrieve the reverse inequality when the local martingale property for $\bar \beta$ as condition \textit{(iii)} implies that all the previous inequalities holds as equality. Therefore, by applying Fatou's lemma we can conclude.
\end{proof}

\begin{proof}[Proof of~\Cref{Thm:Verification_Theorem}]
    \textit{(i)} 
    Fix $\Vc\in\Igen$, an initial condition $\left(t,\Vec{x}_\Vc\right) \in[0,T]\times \R^{d|\Vc|}$, and an admissible control $\beta\in\CtrlStandard$. Define $\lambda:=\iota^{-1}(\Vec{x}_\Vc)$. Consider the stopping times $\tau_k$ and $\theta_n$ defined as follows:
    \begin{align*}
        \tau_k :=&~ \inf\left\{s\in\left(\tau_{k-1},T\right]: \exists i \in \Vc^{t,\lambda;\beta}_{k-1}, ~ Q^i((\tau_{k-1},s]\times[0,C_\gamma])=1 \right\}
        \eqsp,\\
        \theta_n:=&~
        \inf\left\{s\in\left[t,T\right]: |V_s|\geq n \right\}\wedge \inf\left\{s\in\left[t,T\right]: \sum_{i\in \Vc^{t,\lambda;\beta}_u}\left|Y^{i,\beta}_u\right|\geq n \right\}\eqsp.
    \end{align*}
    W.r.t.\ these stopping times, the population $\xi^{t,\lambda;\beta}$ is equal to
    \begin{align*}
        \xi^{t,\lambda;\beta}_s 
        = 
        \sum_{k\geq1}\1_{\{\tau_{k-1}\leq s<\tau_k\}}\sum_{i\in \Vc^{t,\lambda;\beta}_{\tau_k}}\delta_{(i,Y^{i,\beta}_s)}
        = 
        \sum_{k\geq1}\1_{\{\tau_{k-1}\leq s<\tau_k\}}
        \eqsp 
        \iota^{-1}\left((Y^{\beta,i}_s)_{i\in\Vc^{t,\lambda;\beta}_{\tau_k}}\right)\eqsp.
    \end{align*}
    As noted in~\Cref{Rmk:diffusion_between_branching_events} and in the proof of \cref{prop:existence_strong_branching}, between two branching events $\tau_{k-1}$ and $\tau_k$, the population behave like a controlled diffusion living in $\R^{d|\Vc^{t,\lambda;\beta}_{\tau_{k-1}}|}$. Therefore, Itô's formula describes here the evolution of a function valued in $\xi^{t,\lambda;\beta}$ in each interval $\left[\tau_{k-1}\wedge \theta_n, \tau_k\wedge\theta_n\right)$.
    
    Denote $V^n_k:= \Vc^{t,\lambda;\beta}_{\tau_{k}\wedge \theta_n}$, $\Vec{Y}^{\beta,V^n_k}_{s} := (Y^{\beta,i}_{s})_{i\in V^n_k}$, and $\Vec{\beta}^{V^n_k}_{s} := (\beta^{i}_{s})_{i\in V^n_k}$, for $s\geq t$. Using $\iota$, we have that the semimartingale decomposition~\eqref{SDE:strong} translates into
    \begin{align*}
        &\E\left[
            w_{V^n_k}
                \left(s\wedge\tau_k\wedge\theta_n, \Vec{Y}^{\beta,V^n_k}_{s\wedge\tau_k\wedge\theta_n} \right) - 
                w_{V^n_{k-1}}
                \left(s\wedge\tau_{k-1}\wedge\theta_n, \Vec{Y}^{\beta,V^n_{k-1}}_{s\wedge\tau_{k-1}\wedge\theta_n} \right)
        \right]
        \\
        &
        =
        \E\left[
            \int_{s\wedge\tau_{k-1}\wedge\theta_n}^{s\wedge\tau_k\wedge\theta_n}\left\{\partial_t w_{V^n_{k-1}}\left(u,\Vec{Y}^{\beta,V^n_{k-1}}_u\right) + \Lb_{V^n_{k-1}} w_{V^n_{k-1}}\left(\Vec{Y}^{\beta,V^n_{k-1}}_u, \Vec{\beta}^{V^n_{k-1}}_u\right)\right\} du
        \right]\eqsp.
    \end{align*}
    Therefore, we have that
    \begin{align}
        \label{eq:verification_thm:flow}
        \begin{split}
            &\E\left[
            w_{\Vc^{t,\lambda;\beta}_{s\wedge \theta_n}}
                \left(s\wedge\theta_n, \Vec{Y}^{\beta,\Vc^{t,\lambda;\beta}_{s\wedge \theta_n}}_{s\wedge\theta_n} \right) 
            \right] - w_{\Vc}\left(t, \Vec{x}_\Vc\right)  
        \\
        &=\E\left[\sum_{k\geq1}\left(
            w_{V^n_k}
                \left(s\wedge\tau_k\wedge\theta_n, \Vec{Y}^{\beta,V^n_k}_{s\wedge\tau_k\wedge\theta_n} \right) - 
                w_{V^n_{k-1}}
                \left(s\wedge\tau_{k-1}\wedge\theta_n, \Vec{Y}^{\beta,V^n_{k-1}}_{s\wedge\tau_{k-1}\wedge\theta_n} \right)\right)
            \right]
        \\
        &=\E\left[\sum_{k\geq1}
            \int_{s\wedge\tau_{k-1}\wedge\theta_n}^{s\wedge\tau_k\wedge\theta_n}\left\{\partial_t w_{V^n_{k-1}}\left(t,\Vec{Y}^{\beta,V^n_{k-1}}_u\right) + \Lb_{V^n_{k-1}} w_{V^n_{k-1}}\left(\Vec{Y}^{\beta,V^n_{k-1}}_u, \Vec{\beta}^{V^n_{k-1}}_u\right)\right\} du
        \right]\eqsp.
        \end{split}
    \end{align}
    
    Since $w$ satisfies~\eqref{eq:verification_thm:inf_condition}, we have
    \begin{align*}
        \partial_t w_{V^n_{k}}\left(t,\Vec{Y}^{\beta,V^n_{k}}_u\right) + \Lb_{V^n_{k}} w_{V^n_{k}}\left(\Vec{Y}^{\beta,V^n_{k}}_u, \Vec{\beta}^{V^n_{k}}_u\right)+\sum_{i \in \Vc^{t,\lambda;\beta}_{\tau_k\wedge\theta_n}} \psi\left(i,Y^{i,\beta}_u, \xi^{t,\lambda;\beta}_u,\beta^i_u\right) \geq 0\eqsp,
    \end{align*}
    for $\beta\in\CtrlStandard$, $k\geq0$, and $u\in \left[\tau_{k}\wedge\theta_n,\tau_{k+1}\wedge\theta_n\right)$.
    Thus,
    \begin{align}\label{eq:verification_thm:HJB-traj}
        \E\left[
            w_{\left|V_{s\wedge\theta_n}\right|}
                \left(s\wedge\theta_n, \Vec{Y}^{\beta,\left|V_{s\wedge\theta_n}\right|}_{s\wedge\theta_n} \right) 
            \right] - w_\Vc\left(t, \Vec{x}_\Vc\right)
        \geq -
        \E\left[
            \int_t^{s\wedge\theta_n}\sum_{i\in \Vc^{t,\lambda;\beta}_u} \psi\left(i,Y^{i,\beta}_u, \xi^{t,\lambda;\beta}_u,\beta^i_u\right)du 
        \right]\eqsp.
    \end{align}
    From~\eqref{eq:coercivity_hyp:psi}, we have
    \begin{align*}
        \left|\int_t^{s\wedge\theta_n}\sum_{i\in \Vc^{t,\lambda;\beta}_u} \psi\left(i,Y^{i,\beta}_u, \xi^{t,\lambda;\beta}_u,\beta^i_u\right)du \right|
        \leq
        C_\Psi \left(1+ \int_t^{T}\left(|V_u|^2 + \sum_{i\in \Vc^{t,\lambda;\beta}_u}\left|Y^{i,\beta}_u\right|^2+ \sum_{i\in \Vc^{t,\lambda;\beta}_u}\left|\beta^{i}_u\right|^2 \right)du\right)
        \eqsp,
    \end{align*}
    therefore the r.h.s.\ in~\eqref{eq:verification_thm:HJB-traj} is integrable for
    $\beta\in\CtrlStandard^\varepsilon_{(t,\lambda)}$
    using~\eqref{eq:non-explosion-moment2_mass},~\eqref{eq:non-explosion-moment2_population} and~\eqref{eq:bound_eps_opt_control}. Analogously, from~\eqref{eq:verification_thm:growth_w}, we also have that l.h.s.\ in~\eqref{eq:verification_thm:HJB-traj} is integrable for
    $\beta\in\CtrlStandard^\varepsilon_{(t,\lambda)}$. We can then apply the dominated convergence theorem, and send $n$ to infinity into~\eqref{eq:verification_thm:HJB-traj}:
    \begin{align*}
        \E\left[
            w_{\Vc^{t,\lambda;\beta}_s}
                \left(s, \Vec{Y}^{\beta,\Vc^{t,\lambda;\beta}_s}_{s} \right) 
            \right] - w_{\Vc}\left(t, \Vec{x}_\Vc\right)
        \geq
        - \E\left[
            \int_t^{s}\sum_{i\in \Vc^{t,\lambda;\beta}_u} \psi\left(i,Y^{i,\beta}_u, \xi^{t,\lambda;\beta}_u,\beta^i_u\right)du 
        \right]\eqsp,\qquad
        \text{ for }
        \beta\in\CtrlStandard^\varepsilon_{(t,\lambda)}
        \eqsp.
    \end{align*}
    Since $w$ is continuous on $[0,T]\times E$, by sending $s$ to $T$, we obtain by the dominated convergence theorem and by~\eqref{eq:verification_thm:terminal_cond}
    \begin{align*}
    \E\left[
        \Psi \left(\xi^{t,\lambda;\beta}_T \right) 
        \right] - w_\Vc\left(t, \Vec{x}_\Vc\right)
    \geq
    - \E\left[
        \int_t^{T}\sum_{i\in \Vc^{t,\lambda;\beta}_u} \psi\left(i,Y^{i,\beta}_u, \xi^{t,\lambda;\beta}_u,\beta^i_u\right)du 
    \right], \qquad\text{ for }\beta\in\CtrlStandard^\varepsilon_{(t,\lambda)}\eqsp.
    \end{align*}
    From the arbitrariness of $\beta\in\CtrlStandard^\varepsilon_{(t,\lambda)}$, we deduce that $w_\Vc(t, \Vec{x}_\Vc)\leq v_\Vc(t, \Vec{x}_\Vc)$, which entails $w(t, \lambda)\leq v(t, \lambda)$, for $(t,\lambda) \in[0,T]\times E$.

    \textit{(ii)} From \eqref{eq:verification_thm:inf_condition}, we have that
    \begin{align*}
        -\partial_t w_\Vc\left(t,\Vec{x}_\Vc\right)
        - \left\{\Lb_\Vc v_\Vc
        \big(\Vec{x}_\Vc, \Vec{\mathfrak{a}}_\Vc\left(t,\Vec{x}_\Vc\right)\big) 
        - \sum_{i \in\Vc} \psi\big(i,x_i, \iota^{-1}(\Vec{x}_\Vc),(\Vec{\mathfrak{a}}_\Vc)_i\left(t,\Vec{x}_\Vc\right)\big) \right\}=0\eqsp.
    \end{align*}
    Applying this to~\eqref{eq:verification_thm:flow}, we get
    \begin{align*}
        w_\Vc\left(t, \Vec{x}_\Vc\right) = 
        \E\left[
            w_{\Vc^{t,\lambda;\hat\beta}_{s\wedge\theta_n}}
                \left(s\wedge\theta_n, \Vec{Y}^{\hat\beta,\Vc^{t,\lambda;\hat\beta}_{s\wedge\theta_n}}_{s\wedge\theta_n} \right) 
            +
            \int_t^{s\wedge\theta_n}\sum_{i\in \Vc^{t,\lambda;\hat\beta}_u} \psi\left(i,Y^{i,\hat\beta}_u, \xi^{t,\lambda;\hat\beta}_u,\hat\beta^i_u\right)du 
        \right]\eqsp,
    \end{align*}
    for $n\geq 1$. For Fatou's lemma, we obtain
    \begin{align*}
    w_\Vc\left(t, \Vec{x}_\Vc\right) \geq \E\left[
        w_{\Vc^{t,\lambda;\hat\beta}_s}
        \left(s, \Vec{Y}^{\hat\beta,\left|V_{s}\right|}_{s} \right)
        +
        \int_t^{s}\sum_{i\in \Vc^{t,\lambda;\hat\beta}_u} \psi\left(i,Y^{i,\hat\beta}_u, \xi^{t,\lambda;\hat\beta}_u,\hat\beta^i_u\right)du 
    \right]\eqsp.
    \end{align*}
    Sending $s$ to $T$ and using again Fatou's lemma, together with the fact $w_{\Vc^\prime}\left(T,\Vec{y}_{\Vc^\prime}\right) =\Psi\left(\iota^{-1}\left(\Vec{y}_{\Vc^\prime}\right)\right)$, for $\Vc^\prime\in \Igen$, and $\Vec{y}_{\Vc^\prime}\in \R^{d|\Vc^\prime|}$, we see that
    \begin{align*}
        w_\Vc\left(t, \Vec{x}_\Vc\right) \geq \E\left[
            \Psi\left(\xi^{t,\lambda;\hat\beta}_T\right)+
            \int_t^{s}\sum_{i\in \Vc^{t,\lambda;\hat\beta}_u} \psi\left(i,Y^{i,\hat\beta}_u, \xi^{t,\lambda;\hat\beta}_u,\hat\beta^i_u\right)du 
        \right]
        = J\left(t, \iota^{-1}\left(\Vec{x}_\Vc\right); \hat\beta\right)
        \eqsp.
    \end{align*}
    This shows that $w_\Vc(t, \Vec{x}_\Vc) \geq J(t, \iota^{-1}(\Vec{x}_\Vc); \hat\beta) \geq v_\Vc(t, \Vec{x}_\Vc)$, and finally that $w = v$ with $\hat\beta$ as an optimal Markovian control.
\end{proof}

\bibliographystyle{apalike}
\bibliography{main}

\end{document}